\theoremstyle{plain}
\newtheorem{theorem}{Theorem}[section]              \newtheorem{proposition}[theorem]{Proposition} \newtheorem{lemma}[theorem]{Lemma}
\newtheorem{corollary}[theorem]{Corollary}
\theoremstyle{definition}
\newtheorem{definition}[theorem]{Definition}
\theoremstyle{remark}
\newtheorem{remark}[theorem]{Remark}
\newtheorem{example}[theorem]{Example}
\newtheorem{assumption}[theorem]{Assumption}
\makeatletter \@addtoreset{equation}{section} \makeatother
\newcommand{\goth}[1]{\mathfrak{#1}} 
\newcommand{\defeq}{:=}
\newcommand{\N}{\mathbb{N}}     
\newcommand{\R}{\mathbb{R}}     
\newcommand{\Prob}{\mathbb{P}}  
\newcommand{\Exp}{\mathbb{E}}   
\newcommand{\caract}{\mathbbm{1}}
\newcommand{\inner}[2]{\left( #1 \, , \, #2 \right)} 
\newcommand{\norm}[1]{\left\|#1\right\|}              
\newcommand{\triplet}[3]{\left( #1, #2, #3 \right) }    
\newcommand{\ProbSpace}{\triplet{\Omega}{\mathcal{F}}{\Prob}}    
\newcommand{\abs}[1]{\left| #1 \right|} 
\newcommand{\quadraVari}[1]{\left\langle  #1  \right\rangle } 
\newcommand{\operQuadraVari}[1]{\left\langle \!\left\langle #1  \right\rangle \!\right\rangle} 
\title[It\^{o}'s Formula]{It\^{o}'s Formula for It\^{o} processes defined with respect to a cylindrical martingale-valued measure}
\author{S. Cambronero\,\orcidlink{0000-0001-6758-4942}$^1$}
\author{D. Campos\,\orcidlink{0000-0002-3608-1151}$^2$}
\author{C. A. Fonseca-Mora\,\orcidlink{0000-0002-9280-8212}$^3$}
\author{D. Mena\,\orcidlink{0000-0002-9443-391X}$^4$}
\address{Centro de Investigaci\'{o}n en Matem\'{a}tica Pura y Aplicada \\ Escuela de Matem\'{a}tica, Universidad de Costa Rica}
\email{$^1$ santiago.cambronero@ucr.ac.cr} 
\email{$^2$ josedavid.campos@ucr.ac.cr}
\email{$^3$ christianandres.fonseca@ucr.ac.cr}
\email{$^4$ dario.menaarias@ucr.ac.cr}
\begin{document}
\emergencystretch 3em

\subjclass[2020]{60B11, 60H05, 60G20, 60G48} 
\keywords{cylindrical martingale-valued measures, quadratic variation, It\^{o} formula, Burkholder's inequality}

\begin{abstract}
Using the authors' recently developed stochastic integration [Stoch PDE: Anal Comp, 2024], we prove an It\^{o} formula  for Hilbert space-valued It\^{o} processes defined with respect to a cylindrical martingale-valued measure. We develop some tools from stochastic analysis, as are the predictable and optional quadratic variation of a stochastic integral, the continuous and purely discontinuous parts of an integral process, and a Riemann representation formula. As an application of our It\^{o} formula, we prove a  Burkholder inequality for the stochastic integral defined with respect to a cylindrical martingale-valued measure.  Finally, we derive It\^{o} formulas for Hilbert space-valued martingale-valued measures and for cylindrical square integrable martingales.
\end{abstract}

\maketitle

\section{Introduction}

It\^{o}'s formula is one of the most important tools in stochastic analysis and has many theoretical and practical applications, for instance, to the resolution of stochastic differential equations. In the real-valued case and for Wiener processes, the formula was first established by K. It\^{o} in \cite{Ito:1951}. An extension to Hilbert space-valued It\^{o} processes, driven by a Wiener process, was introduced by Curtain and Falb \cite{CurtainFalb}. Later, Kunita \cite{Kunita:1970} made a further extension  to Hilbert space-valued It\^{o} processes driven by square integrable martingales.  An even further extension to Hilbert space-valued semimartingales was carried out by M\'{e}tivier in \cite{Metivier}. Some recent works that establish It\^{o} formulas for other types of noises or for certain classes of Banach spaces are \cite{BodoRiedleTylb:2024, BrzezniakVanNeervenVeraarWeis:2008, DaPratoJentzenRockner:2019, DiGirolamiRusso:2014, Fabbri-Russo:2017, GyongyWu:2021, RudigerZiglio:2006, VeraarYaroslavtsev:2016}. 

In this paper, we prove an It\^{o} formula for Hilbert space-valued It\^{o} processes driven by a cylindrical martingale-valued measure. To be precise, let $H$, $G$ and $K$ be separable Hilbert spaces, $\beta:\Omega \times \R_{+} \rightarrow \R$ an adapted c\`{a}dl\`{a}g process, locally of finite variation. We also consider a ring $\mathcal{A}$ of subsets of a topological space $U$ and $(M(t,A): t \geq 0, A \in \mathcal{A})$ a cylindrical martingale-valued measure. That is, for each $A \in \mathcal{A}$, $M(\cdot,A)$ consists on a collection of cylindrical random variables on $H$, and for $t\geq 0$, $M(t,\cdot)$ is an $L^{2}$-valued measure defined on $\mathcal{A}$. Let $X$ be the $G$-valued It\^{o} process:
$$ d X_t = \Psi(t) d\beta(t) + \int_U \Phi(t, u)\, M(dt, du) $$ 
for coefficients $\Psi: \Omega \times \R_{+} \rightarrow G$ and $\Phi: \Omega \times \R_{+} \times U \rightarrow \mathcal{HS}(H,G) $ satisfying suitable integrability conditions, where $\mathcal{HS}(H,G)$ denotes the space of Hilbert-Schmidt operators from $H$ to $G$. 
If  $f:[0, T] \times G \rightarrow K$ is a function of class $C^{1,2}$, we show the following It\^{o} formula:
$$
\begin{aligned}
& f(t, X_t) = f(0, X_{0})+ \int_{0}^{t} f_{t}(s, X_{s-}) d s +\int_{0}^{t}\!\!\int_U  f_{x}(s, X_{s-}) \Phi(s, u)  M(ds, du) \\
& + \int_{0}^{t} f_{x}(s, X_{s-}) \psi(s) d\beta(s)   +\frac{1}{2} \int_{0}^{t} \!\!\int_U 
\mbox{Tr}_{\Phi(s, u) Q_{M^{c}}^{1 / 2}} \left( f_{x x}(s, X_{s-}) \right)
\operQuadraVari{M^{c}}(ds,du) \\
 & +\sum_{0<s \leq t}\left[f\left(s,X_s\right)-f\left(s,X_{s-}\right)-f_x\left(s,X_{s-}\right)\left(\Delta X_{s} \right)\right].
\end{aligned}
$$
The second integral on first line corresponds to a stochastic integral with respect to $M$. The remaining three
integrals are defined almost surely as $K$-valued Bochner integrals with respect to the Lebesgue measure, to $\beta$, and to $\operQuadraVari{M^{c}}$ respectively. The process $\operQuadraVari{M^{c}}$ is the predictable quadratic variation of the cylindrical martingale-valued measure $M^{c}$, corresponding to the continuous part of $M$.

The theory of stochastic integration with respect to a cylindrical martingale-valued measure $M$ was introduced by the authors in \cite{CCFM:SPDE}. A fundamental step in the construction of the stochastic integral was the introduction of the notion of predictable quadratic variation $\operQuadraVari{M}$ for $M$. The definition of such an object is non-trivial, because $M$ is not an $H$-valued martingale measure (as it is assumed, for instance, in \cite{Applebaum:2006}) and hence tools from the theory of $H$-martingales (which can be found for instance in \cite{Metivier}) are not at our disposal. Since $M$ encloses both the concept of cylindrical martingale and of martingale-valued measure, in  \cite{CCFM:SPDE} the authors found it convenient to formulate a definition for $\operQuadraVari{M}$ as a supremum of a family of intensity measures (in the sense introduced by Walsh in \cite{Walsh:1986}). Such a definition extended the work carried out by Veraar and Yaroslatsev in \cite{VeraarYaroslavtsev:2016} to the context of cylindrical martingale-valued measures.

In Section \ref{sectPreliminaries} we recall the main definitions and properties of cylindrical martingale-valued measures and their quadratic variation, as well as the stochastic integral defined with respect to these objects in \cite{CCFM:SPDE}. In Section \ref{sectQuadraVariaStochIntegral}  we extend the theory of stochastic integration, by introducing some fundamental tools we will need for the formulation and proof of our It\^{o} formula. First, in Section \ref{subSecItoIsometry} we prove a conditional version of It\^{o}'s isometry which is later used in Section \ref{subSectPrediQuadVariStochInteg} to compute the predictable quadratic variation of the stochastic integral. We next show, in Section \ref{subSecContAndDisconCMVM}, that under basic conditions a cylindrical martingale-valued measure $M$ can be decomposed as an orthogonal sum of a cylindrical martingale-valued measure with continuous paths $M^{c}$ and a cylindrical martingale-valued measure with purely discontinuous paths $M^{d}$. Both $M^{c}$ and $M^{d}$ have predictable quadratic variations $\operQuadraVari{M^{c}}$ and $\langle\!\langle M^{d} \rangle\! \rangle$ with corresponding (operator-valued) covariance processes $Q^{c}$ and $Q^{d}$. Moreover, we show that the stochastic integral with respect to $M$ can be decomposed as the sum of the stochastic integrals with respect to $M^{c}$ and $M^{d}$. Later, in  Section \ref{subSecOptioQuaVariStochInteg} we show that the optional quadratic variation (as a $G$-valued process) of the stochastic integral with respect to $M$ can be expressed in terms of the quadratic variation of the stochastic integrals with respect to $M^{c}$ and $M^{d}$. 

The core of this paper is Section \ref{sectItoFormula}, where we prove our version of It\^{o} formula. The result is first proved for scalar valued functions (Theorem \ref{theoItoFormulaScalarCase}) and later extended to vector valued functions (Theorem \ref{theoItoFormulaVectorCase}) via duality and the Hahn-Banach theorem. This reduction to the scalar case simplifies some of the arguments based on convergence under the integral sign, as well as the particular form of the residual on Taylor's formula. Since our cylindrical martingale-valued measures may have discontinuous paths, so do the corresponding It\^{o} processes. Therefore, in the proof of Theorem \ref{theoItoFormulaScalarCase}, one requires a careful analysis of the size of the jumps of the It\^{o} processes. This is where the separation of the quadratic variation of the stochastic integral in terms of the quadratic variations of the continuous and purely discontinuous parts plays a major role. 

The usefulness of our It\^{o}'s formula is shown in Section \ref{subSectBurkholder} where we use it to establish a Burkholder inequality for the stochastic integral with respect to a cylindrical martingale-valued measure. We do this by first assuming $M$ has (cylindrically) continuous paths (Theorem \ref{theoBurkholderContinuousCase}) and then, that it has (cylindrically) purely discontinuous paths (Theorem \ref{theoBurkholderDiscontinuousCase}). The general case is obtained by using the decomposition of the stochastic integral with respect to $M$ as the sum of the stochastic integrals with respect to the continuous and purely discontinuous parts of $M$.
Finally, we apply the Burkholder inequality in the case of an $H$-valued c\`{a}dl\`{a}g L\'{e}vy process (Example \ref{examHvaluedLevy}). We finish this section  by establishing a Kunita inequality as a consequence of this example.

Finally, in Section \ref{sectionAppliItoToHValuedMVM} we directly relate our results to the ones appearing in some other literature.  In the first half, we show how our theory can be applied to Hilbert space-valued martingale-valued measures and we derive a corresponding It\^o formula.  To the extent of our knowledge, this result is new.  This can be applied to recover the classical It\^o formula in other works, as for example \cite{DaPratoZabczyk, PeszatZabczyk, MetivierPellaumail}). 

In the second subsection, we look at the particular case of a cylindrical square integrable martingale, both, the continuous paths case (recovering the result of \cite{VeraarYaroslavtsev:2016} for the Hilbert space setting) and the general (including discontinuous paths) case.

 
\section{Preliminaries}\label{sectPreliminaries}

Throughout this work, $H$, $G$ and $K$ denote separable Hilbert spaces. We will use the notation $(\cdot,\cdot)_{H}, (\cdot,\cdot)_{G}, (\cdot,\cdot)_{K}$ for the inner products in $H$, $G$ and $K$ respectively. We identify the dual of a Hilbert space with the space itself. For a Hausdorff topological space $U$, its Borel $\sigma$-algebra will be denoted by $\mathcal{B}(U)$. 

We assume that $(\Omega, \mathcal{F}, \Prob)$ is a complete probability space equipped with a filtration $(\mathcal{F}_{t})_{t \in \R_{+}}$ that satisfies the \emph{usual conditions}, that is, it is right continuous and $\mathcal{F}_{0}$ contains all $\Prob$-null sets. The predictable $\sigma$-algebra on $\Omega \times [0,\infty)$ is denoted by $\mathcal{P}$ and for any $T>0$ we denote by $\mathcal{P}_{T}$ the restriction of $\mathcal{P}$ to $\Omega \times [0,T]$ (For further details, see chapters 6 and 7 in \cite{CohenElliott:2015}).

A \emph{cylindrical random variable} on $H$ is a linear and continuous operator $Z:H \rightarrow L^{0} (\Omega, \mathcal{F}, \Prob)$, where $L^{0} (\Omega, \mathcal{F}, \Prob)$ is the space of (equivalence classes of) real-valued random variables, equipped with the topology of convergence in probability. A family of cylindrical random variables $Z=(Z_{t}: t \in [0,T])$ on $X$ is called a \emph{cylindrical stochastic process} on $H$. A cylindrical stochastic process  $M=(M_{t}: t \in [0,T])$ is called a \emph{cylindrical mean-zero square integrable martingale} on $X$ if for every $h \in H$ we have $M(h) \in \mathcal{M}_{T}^{2}$, the linear space of all real-valued, c\`{a}dl\`{a}g, mean-zero, square integrable martingales on the time interval $[0,T]$. This is a Banach space with the norm 
$$
\norm{m}_{\mathcal{M}_{T}^{2}}= \left( \sup_{t \in [0,T]} \Exp \left[  \abs{m(t)}^{2} \right] \right)^{1/2}.
$$

For any two Banach spaces $X$ and $Y$, the Banach space of bounded linear operators from $X$ into $Y$ will be denoted by  $\mathcal{L}(X,Y)$. We will denote the space of real-valued bounded bilinear forms on $X \times Y$ by $\goth{Bil}(X,Y)$. Trace class operators on a Hilbert space $H$ will be denoted by $\mathcal{L}_{1}(H)$.

Let $ {Y}=( {Y}_{t}: t \geq 0)$ be an $H$-valued square integrable martingale. There exists a unique (up to in-distinguishability) predictable, real-valued process $\quadraVari{ Y }_t$ with paths of finite variation such that $\quadraVari{ Y }_0=0$ and $\|Y_t\|_H^2-\quadraVari{ Y}_t$ is a martingale. We define a predictable operator-valued quadratic variation process $\operQuadraVari{Y}_{t}$ of $Y_t$ with values in the space of nonnegative definite trace-class operators on $H$ such that, for all $h_1, h_2 \in H$,
$$(Y_t,h_1)_H (Y_t,h_2)_H-(\operQuadraVari{ Y}_t(h_1),h_2 )_H$$
is a martingale. This process exists and is unique. Moreover $\quadraVari{ Y }_t=\mbox{trace}(\operQuadraVari{ Y}_t)$ (see e.g.  \cite{Metivier}).

There is also a unique real, (up to in-distinguishability) 
regular, right continuous process with paths of finite variation, denoted by $[Y]_t$, such that for every increasing sequence $(\Pi_n)$ of infinite partitions of $\R^+$, where  $\Pi_n=\{0<t^n_0 < t^n_1<\cdots<t^n_k<\cdots\}$, with $\displaystyle \lim_{k \to \infty}t^n_k=+\infty$ and $\displaystyle \lim_{n \to \infty}\sup_{i \geq 0} (t^n_{i+1}-t^n_i)=0$, one has
$$
[Y]_t=\lim_{n} \sum_{i \geq 0} \|Y_{t^n_{i+1}\wedge t}-Y_{t^n_i \wedge t}\|^2_H,
$$
where the limit is taken in the $L^1$-sense (see \cite{Metivier}). There is a connection between the real-valued predictable quadratic variation $\quadraVari{ Y }_t$ and the optional quadratic variation $[Y]_t$, given by the following equality, which is satisfied for every real $t \geq 0$,
$$[Y]_t=\quadraVari{Y^c}_t+ \sum_{s \leq t}\|\Delta Y_s\|_H^2=[Y^c]_t+\sum_{s \leq t}\|\Delta Y_s\|_H^2 \quad a.s,$$
where $Y^c$ is the continuous part of $Y$.

Let $H \widehat{\otimes}_1 H$ denote the completion of the projective tensor product $H \otimes H$ (in fact since $H$ is Hilbert, $H \widehat{\otimes}_1 H$ is equivalent to the nuclear operators defined on $H$). 

As in the case of the real-valued quadratic variation, there exists (see \cite{Metivier}) an $H \widehat{\otimes}_1 H$-valued 
regular right continuous process, which is unique (up to in-distinguishability), denoted by $\left[\!\left[ Y \right]\!\right]_t$, such that for every increasing sequence $(\Pi_n)$ of partitions of $\R^+$, defined as before, one has 
$$
\left[\!\left[  Y\right]\!\right]_t=\lim_{n} \sum_{i \geq 0} (Y_{t^n_{i+1}\wedge t}-Y_{t^n_i \wedge t})^{\otimes_2},
$$
where the limit is in probability. 

Moreover, for every $t \geq 0$,
$$
\left[\!\left[Y\right]\!\right]_t=\operQuadraVari{Y^c}_t+ \sum_{s \leq t} \Delta Y_s^{\otimes_2}=\left[\!\left[Y^c\right]\!\right]_t+\sum_{s \leq t} \Delta Y_s^{\otimes_2} \quad a.s,
$$
where $Y^c$ is the continuous part of $Y$ and the series on the right-hand side are absolutely convergent in $H \widehat{\otimes}_1 H$.

\subsection{Cylindrical martingale-valued measures and quadratic variation}

In this section we review the basic definitions and properties of cylindrical martingale-valued measures and its quadratic variation. We will make emphasis on our assumptions for $M$ and its quadratic variation, as well as the corresponding implications of these assumptions. For further details the reader is referred to \cite{CCFM:SPDE}.

Throughout this work we assume that $M$ is a \emph{cylindrical orthogonal martingale-valued measure} on $H$. To be precise, we fix a Hausdorff topological space $U$, which is Lusin in the sense that it is
homeomorphic to a Borel subset of the line. We consider a ring $\mathcal{A}$ of Borel subsets of $U$. A \emph{cylindrical martingale-valued measure} $M$ is defined as a collection  $(M(t,A): t \geq 0, A \in \mathcal{A})$ of cylindrical random variables on $H$ such that
\begin{enumerate} 
\item For each $A \in \mathcal{A}$, $M(0,A)(h)= 0$ $\Prob$-a.e. for all $h \in H$. \label{timezerocomvmdef}
\item For each $A \in \mathcal{A}$, $M(A) = (M(t,A): t \geq 0)$, is a cylindrical mean-zero square integrable martingale, and for each $t > 0$ and $A \in \mathcal{A}$, the map 
$$ 
M(t,A): H \rightarrow L^{0} \ProbSpace
$$ 
is continuous. \label{cylindricalMartingale}
\item If $t>0$ and $h \in H$, $M(t,\cdot)(h): \mathcal{A} \rightarrow L^{2} \ProbSpace$ is a $\sigma$-finite $L^{2}$-valued measure. As part of the hypothesis, $\mathcal{A}$ contains each $\mathcal{B}(U_n)$, where $(U_n)$ is the sequence of sets increasing to $U$, given by the $\sigma$-finiteness of $M(t,\cdot)h$. This sequence is independent of $t$. (see Definition 3.1 in \cite{CCFM:SPDE}). \label{fixedtimeismeasure}
\end{enumerate}

We further say that $M$ is \emph{orthogonal} if: 
\begin{enumerate}\setcounter{enumi}{3}
\item Given $t>0$ and $h \in H$, $\quadraVari{M(A)(h),M(B)(h)}_{t}=0$ whenever $A,B\in \mathcal{A}$ are disjoint. \label{orthogonality}
\end{enumerate}

To the cylindrical orthogonal martingale-valued measure $M$ we can associate a family of \emph{intensity measures} $(\nu_{h}: h \in H)$ which are defined as follows: given $h \in H$, $\nu_{h}$ is a  random predictable $\sigma$-finite measure  on
$\mathcal{B}(\R+) \otimes \mathcal{B}(U)$  such that for every $t \geq 0$  and $A \in \mathcal{A}$, we have 
\begin{equation}\label{eqDefiIntensityMeasures}
 \Prob \left( \omega \in \Omega:  \nu_{h} (\omega) ([0, t] \times A) = \quadraVari{M(A)(h)}_{t}(\omega) \right) =1.
\end{equation} 
The existence of this family is a consequence of Lemma 5.1 in \cite{CCFM:SPDE}. 

We will assume that $M$ has a quadratic variation, that is, a random measure $\eta: \Omega \rightarrow \mathcal{M}_{+}(\R_{+} \times U, \mathcal{B}(\R_{+}) \otimes \mathcal{B}(U) )$ that satisfies: 
\begin{enumerate}    
    \item Given $t \geq 0$ and $A \in \mathcal{A}$, for $\Prob$-a.e. $\omega \in \Omega$ we have $\eta(\omega)([0,t] \times A) < \infty$.
    \item \label{propertyUpperBoundNuX} $\eta$ is a minimal element  for the collection of all random measures $\zeta: \Omega \rightarrow \mathcal{M}_{+}(\R_{+} \times U, \mathcal{B}(\R_{+}) \otimes \mathcal{B}(U) )$ with the property:   $\forall\, h \in H$ with $\| h \| = 1$,  $\nu_{h} \leq \zeta$.
\end{enumerate} 

We also assume that the family of intensity measures  $(\nu_{h}: h \in H)$ satisfies the \emph{sequential boundedness property}:  if   $(h_{n})$ is dense in the unit sphere, $\norm{h}=1$ and $t>0$, there exists $\Omega_{h} \subseteq \Omega$ with $\Prob(\Omega_{h})=1$, such that for $0 \leq s <t$, $A \in \mathcal{A}$ and $\omega \in \Omega_{h}$,
\begin{equation}
  \label{eqsequentialBoundednessProper}
  \nu_{h}(\omega)((s,t] \times A) \leq  \sup_{n \geq 1} \nu_{h_{n}}(\omega) ((s,t] \times A). 
\end{equation}
This property and Theorem 5.10 in \cite{CCFM:SPDE} guarantee the existence of a $\Prob$-a.e. unique quadratic variation, that we denote by $\operQuadraVari{M}$, and if  $(h_{n})$ is dense in the unit sphere of $H$, then $\operQuadraVari{M}=\sup_{n \geq 1} \nu_{h_{n}}$ $\Prob$-a.e. (here the supremum is taken $\omega$-wise in the sense of supremum of measures as defined in Section  2 of \cite{CCFM:SPDE}). 

The reader must bear in mind that $\operQuadraVari{M}$ is defined as a ``predictable'' quadratic variation for $M$, which can be inferred from the definition of intensity measures in \eqref{eqDefiIntensityMeasures}.  Since we will not introduce an ``optional'' quadratic variation for $M$, we will keep referring to  $\operQuadraVari{M}$ as the quadratic variation of $M$. 

\begin{remark}\label{remaNoOptionalQV}
The existence of the intensity measures for real-valued martingale-valued measures is fundamental for the construction of the quadratic variation. To the extent of our knowledge there is no analogue of such measures for the optional quadratic variation. Indeed, in the proof of existence of intensity measures in  \cite{Walsh:1986} (see also Appendix A in \cite{CCFM:SPDE} for the Hilbert space setting) the main argument relies heavily on the existence of second moments for the martingale-valued measure. This is one of the reasons why in this article we do not consider $p$-integrable martingale-valued measures for $p<2$.      
\end{remark}

We will further assume that the quadratic variation of $M$ satisfies: for $\Prob$-a.e. $\omega \in \Omega$,
\begin{equation}\label{eqBoundedrangequadraticvariation}
    \sup_{A \in \mathcal{A}} \operQuadraVari{M}(\omega)([0,T]\times A) <\infty.
\end{equation}

\begin{remark}\hfill
\label{remarkFinitudQVTU}
\begin{enumerate}
    \item Since $\mathcal{A}$ contains the sets $U_n$ mentioned on \eqref{fixedtimeismeasure} of the definition of $M$, this implies in particular that $\operQuadraVari{M}$ is $\sigma$-finite.
    \item Observe that \eqref{eqBoundedrangequadraticvariation} does not imply that $\operQuadraVari{M}(\omega)([0,T]\times U)$ is finite, since $U$ might not be in $\mathcal{A}$. However, if $U\in\mathcal{A}$ then \eqref{eqBoundedrangequadraticvariation} follows immediately.
\end{enumerate}
\end{remark}

By Theorem 5.20 in \cite{CCFM:SPDE} there exists a random $\goth{Bil}(H,H)$-valued measure $\alpha_{M}$ defined on $\mathcal{B}([0,T]) \otimes \mathcal{B}(U)$ such that for all $h_{1},h_{2} \in H$, $0 \leq s \leq t\leq T$ and $A \in \mathcal{A}$, $\Prob$-a.e. $\omega \in \Omega$,
\begin{equation*}
\label{eqDefiAlphaMCovariation}
\alpha_{M}(\omega)((s,t] \times A)(h_{1},h_{2}) \\ 
=  \quadraVari{M(A)(h_{1}),M(A)(h_{2})}_{t}(\omega)- \quadraVari{M(A)(h_{1}),M(A)(h_{2})}_{s}(\omega).
\end{equation*}
The  random measure $\alpha_{M}$ induces a random measure $\Gamma_{M}$ 
on $\mathcal{B}([0,T]) \otimes \mathcal{B}(U)$ taking values in $\mathcal{L}(H,H)$, called the \emph{quadratic variation operator measure},  by means of the prescription 
\begin{equation*}\label{eqDefiMeasureGammaM}
\inner{\Gamma_{M}(\omega)(C)h_{1}}{h_{2}}=\alpha_{M}(\omega)(C)(h_{1},h_{2}), \quad \forall\,  \, h_{1}, h_{2} \in H, \, C \in \mathcal{B}([0,T]) \otimes \mathcal{B}(U).     
\end{equation*}
By Theorem 5.21 in \cite{CCFM:SPDE}, given $T>0$,  there exists a  process $Q_{M}: \Omega \times [0,T]  \times U \rightarrow \mathcal{L}(H,H)$ such that $\Prob$-a.e. $\omega \in \Omega$
\begin{equation}\label{existenceofQ}
\inner{\Gamma_{M}(\omega)(C)h_{1}}{h_{2}}_H = \int_{C} \inner{Q_{M}(\omega,r,u)h_{1}}{h_{2}}_H \, \operQuadraVari{M} (\omega)(dr,du)    
\end{equation}
for all $h_{1}, h_{2} \in H$, $C \in \mathcal{B}([0,T]) \otimes \mathcal{B}(U)$. Moreover, the following properties hold:
\begin{enumerate}
    \item For every $h_{1}, h_{2} \in H$, the mapping $(\omega,r,u) \mapsto \inner{Q_{M}(\omega,r,u)h_{1}}{h_{2}}$ is predictable, that is, $\mathcal{P}_T \otimes \mathcal{B}(U)$-measurable. \label{Qpredictable}
    \item For $\Prob$-a.e. $\omega \in \Omega$, $Q_{M}(\omega,\cdot,\cdot)$ is positive and symmetric $\operQuadraVari{M}$-a.e. \label{Qpositiveandsymmetric}
    \item For $\Prob$-a.e. $\omega \in \Omega$, $\norm{Q_{M}(\omega,\cdot,\cdot)}_{\mathcal{L}(H,H)}=1$,  $\operQuadraVari{M}$-a.e. \label{normoneoftheoperatorQ}
\end{enumerate}

\subsection{Stochastic integration with respect to cylindrical martingale-valued measures}\label{sectStochIntegra}

In this section we summarize a theory of Hilbert-space valued stochastic integration for operator-valued processes with respect to $M$ (for further details, see \cite{CCFM:SPDE}).

Let $\mathcal{HS}(H,G)$ be the space of Hilbert-Schmidt operators from $H$ to $G$. For the process $Q_{M}: \Omega \times [0,T]  \times U \rightarrow \mathcal{L}(H,H)$ and for each $(\omega, t, u) \in \Omega \times [0,T] \times U$, we consider the space $H_Q=Q^{1/2}(H)$ with inner product given by
$$
(h,g)_{H_Q}=(Q^{-1/2}h, Q^{-1/2}g)_H.
$$
Here $Q^{-1/2}$ is the pseudo inverse of $Q^{1/2}$. This space is isometric to $((\mbox{Ker}(Q^{1/2}))^{\perp},(\cdot, \cdot)_H)$, so if we have a complete orthonormal system $\{h_k\}$ on $(\mbox{Ker}(Q^{1/2}))^{\perp}$, we get a complete orthonormal system $\{Q^{1/2}h_k\}$ on $H_Q$. Even more, that system can be supplemented to a complete orthonormal system on $H$  by elements of $\mbox{Ker}(Q^{1/2})$ and thus the equality
$$
\norm{L}_{\mathcal{HS}(H_{Q},G)} = \norm{L\circ Q^{1/2}}_{\mathcal{HS}(H,G)}
$$
holds for any $L \in \mathcal{HS}\left(H_Q,G\right)$.

Throughout this work, we consider a class of integrands  given by families of operators $\Phi(\omega,t,u),$ indexed by $(\omega,t,u)\in \Omega\times [0,T]\times U$, such that:
\begin{enumerate}
\item \label{domainIntegrands} For each $(\omega,t,u) \in \Omega\times [0,T]\times U$, $\Phi(\omega,t,u)\in \mathcal{HS}(H_{Q_{M}},G)$. 
\item \label{predictabilityIntegrands} For every $h \in H$, the $G$-valued process $\Phi \circ Q^{1/2}_{M}(h)$ is predictable, that is, 
the mapping 
\begin{equation}\label{eqPredictabilityIntegrands}
 (\omega,t,u) \mapsto \Phi(\omega,t,u) \circ Q^{1/2}_{M}(\omega,t,u)(h),  
\end{equation}
 is $\mathcal{P}_{T}\otimes \mathcal{B}(U)/\mathcal{B}(G)$-measurable. 
\item \label{integrabilityIntegrands} $\norm{\Phi}^2_{\Lambda^2(M,T)}$ is finite, where this quantity is defined by
\begin{align}
\label{NewIntegrands}
\norm{\Phi}^2_{\Lambda^2(M,T)} & =
\Exp \int_{[0,T]\times U} \| \Phi(\omega,t,u)\circ Q_{M}^{1/2}(\omega,t,u)\|^2_{\mathcal{HS}(H,G)} \, \operQuadraVari{M}(dt,du) \nonumber\\
& = \int_{\Omega \times [0,T]\times U} \| \Phi(\omega,t,u)\circ Q_{M}^{1/2}(\omega,t,u)\|^2_{\mathcal{HS}(H,G)} \,d\mu_M(\omega, t,u),
\end{align}
\end{enumerate}
where $\mu_M$ is the Dol\'eans measure associated to $M$ on $(\Omega \times [0,T] \times U, \mathcal{P}_T \otimes \mathcal{B}(U))$ (see Definition 6.1 and Lemma 6.2 of \cite{CCFM:SPDE}). This space of integrands is denoted by $\Lambda^2(M,T)$ and it is complete with the Hilbertian seminorm given by (\ref{NewIntegrands}). 

\begin{remark}
\label{remarkFinitudNormaPhi}
    According to Remark \ref{remarkFinitudQVTU}, the quadratic variation $\operQuadraVari{M}$ could be infinite on $[0,T]\times U$, so the finiteness of \eqref{NewIntegrands} relies on the integrand $\Phi$.
\end{remark}

In order to define a stochastic integral for this space, we first consider a class of simple operator-valued maps, denoted by $\mathcal{S}(M,T)$, which is contained in $\Lambda^2(M,T)$. Elements in $\mathcal{S}=\mathcal{S}(M,T)$ can be expressed in the form
\begin{equation}
\label{eqSimpleIntegrands}
\Phi(\omega,r,u)=  \sum_{i=1}^{n}\sum_{j=1}^{m} \caract_{(s_{i}, t_{i}]}(r) \caract_{F_{i}}(\omega) \caract_{A_{j}}(u) S_{i,j},
\end{equation}
for all $r \in [0,T]$, $\omega \in \Omega$, $u \in U$, where $m$, $n \in \N$, and for $i=1, \dots, n$, $j=1, \dots, m$, $0\leq s_{i}<t_{i} \leq T$, $F_{i} \in \mathcal{F}_{s_{i}}$, $A_{j} \in \mathcal{R}$ and $S_{i,j} \in \mathcal{HS}(H,G)$. For $\Phi\in \mathcal{S}(M,T)$ we define its stochastic integral $I(\Phi)$ as the $G$-valued process given by 
\begin{equation}
\label{NewDefIntSimpleIntegrand}
I_t(\Phi) : = \int_0^t\! \! \int_U \Phi(s,u) M(ds,du) = 
\sum_{i=1}^{n} \sum_{j=1}^{m} \caract_{F_i}(\omega) Y_{i,j}(t)
\end{equation}
where $Y_{i,j}$ is the square integrable martingale taking values in $G$ and satisfying (see \cite{AlvaradoFonseca:2021})
$$
\left( Y_{i,j}(t),g \right)_{G} = M\Bigl((s_i \land t, t_i\land t], A_j\Bigr) (S_{ij}^*g).
$$
The map $I:\mathcal{S}(M,T) \to \mathcal{M}^2(G)$ is linear and continuous and satisfies the It\^{o} isometry:
\begin{equation}
\label{eqItoIsometrySimpleIntegrands}
\Exp \left[ \norm{I_{t}(\Phi)}^{2} \right] = \Exp\int_{[0,t] \times U} \norm{ \Phi(\omega, s,u)\circ Q^{1/2}_M(\omega,s,u)}_{\mathcal{HS}(H,G)}^{2}\operQuadraVari{M}(ds,du).
\end{equation}
Since $\mathcal{S}(M,T)$ is dense in $\Lambda^2(M,T)$, the map $I$ has a continuous linear extension $$I: \Lambda^2(M,T) \rightarrow \mathcal{M}_T^2(G),$$ where (\ref{eqItoIsometrySimpleIntegrands}) holds for $\Phi \in \Lambda^2(M,T)$. In addition, $I(\Phi)$ is a continuous process provided that, for each $A \in \mathcal{A}$ and $h \in H$, the real-valued stochastic process $M(\cdot, A)(h)$ is continuous (see Proposition 6.14 in \cite{CCFM:SPDE}).

In the case that (\ref{NewIntegrands}) is replaced by the weaker condition
\begin{equation}
\label{LocallyIntegrable}
\Prob \left(
\int_{[0,T]\times U} \| \Phi(\omega,s,u)\circ Q_M^{1/2}(\omega,s,u)\|^2_{\mathcal{HS}(H,G)} \, \operQuadraVari{M}(ds,du) < \infty \right) = 1,
\end{equation}
we have an extension to locally integrable operator-valued maps. This space is denoted by $\Lambda^2_{\textup{loc}}(M,T)$, which has a complete, metrizable, linear topology (for further details, see \cite{CCFM:SPDE}). Consider $\Phi \in \Lambda^2_{\textup{loc}}(M,T)$ and the sequence of stopping times given by
$$
\tau_n(\omega) : = \inf \left\{  t \in [0,T] : \int_{[0,T]\times U} \| \Phi(\omega,s,u)\circ Q_M^{1/2}(\omega,s,u)\|^2_{\mathcal{HS}(H,G)} \, \operQuadraVari{M}(ds,du) \geq n  \right\}.
$$
Then $(\tau_n : n \in \N)$ is an increasing sequence satisfying $\tau_n \to T$  $\Prob$-a.e. by \eqref{LocallyIntegrable}.  Since this sequence is $\mathcal{F}_t$-adapted, for each $n \in \N$, the function $\Phi^{\tau_n} = \Phi \caract_{[0, \tau_n]}$ belongs to $\Lambda^2(M,T)$.  Thus, we can extend the definition of the integral to $\Phi \in \Lambda^2_{\textup{loc}}(M,T)$ by setting 
\begin{equation}
    I(\Phi) : = I(\Phi^{\tau_n}), \ \mbox{if}\  \int_{[0,T]\times U} \| \Phi(\omega,s,u)\circ Q_M^{1/2}(\omega,s,u)\|^2_{\mathcal{HS}(H,G)} \, d\operQuadraVari{M} \leq n.
\end{equation}
Indeed, Proposition 6.17 in \cite{CCFM:SPDE} (applied to $\sigma = \tau_n \wedge \tau_m$ for $m < n$) guarantees that this definition is consistent. From this localization procedure, $I(\Phi)$ belongs to the linear space of all the $G$-valued locally  zero-mean square integrable martingales, denoted by $\mathcal{M}_T^{2,loc}(G)$, which is equipped with the topology of uniform convergence in probability on compact intervals of time. The map $I:\Lambda^2_{\textup{loc}}(M,T) \rightarrow \mathcal{M}_T^{2,loc}(G)$ is linear and its continuity follows from Proposition 6.19 of \cite{CCFM:SPDE}.

\section{Quadratic variation of the stochastic integral} \label{sectQuadraVariaStochIntegral}

In this section we continue with the treatment of the properties of the stochastic integral with respect to cylindrical martingale-valued measures by calculating its predictable and optional quadratic variations (Theorem \ref{theoQuadraVariaStochIntegral} below). 

From classical theory, one might expect 
    $$
    \left[ I \right]_t = \int_{0}^{t} \!\!\int_U  \norm{ \Phi(r,u)Q_{M}^{1/2}(r,u)}_{\mathcal{HS}(H,G)}^{2} \left[\!\left[ M \right]\!\right] (dr,du) 
    $$
    where $\left[\!\left[ M \right]\!\right] $ is the optional quadratic variation of $M$. However, $\left[\!\left[ M \right]\!\right]$ has not been defined since we would require existence of intensity measures in the sense of \cite{Walsh:1986} (see Remark \ref{remaNoOptionalQV}), but being optional instead of predictable (see \eqref{eqDefiIntensityMeasures}). This is why we must define $[I]$ as in \eqref{eqOptionalQuadraticVariationIntegral} below, separating the continuous and discontinuous parts (section \ref{subSecContAndDisconCMVM}). This adds an extra layer of difficulty in comparison with other works such as \cite{DaPratoZabczyk,PeszatZabczyk}, where the variations can be computed explicitly. Our approach generalizes those results (see Section \ref{sectionAppliItoToHValuedMVM}).

\subsection{Conditional It\^{o} isometry}\label{subSecItoIsometry}

As a preliminary result we will need the following conditional version of It\^{o}'s isometry. 

\begin{theorem}[Conditional It\^{o} isometry]\label{theoConditionalItoIsometry}
For every $\Phi \in \Lambda^{2}(M,T)$,  $0 \leq s < t \leq T$ and $g \in G$, we have
\begin{eqnarray}
 \Exp \left[ \abs{ \inner{I_{T}\left( \, \mathbbm{1}_{(s,t]} \, \Phi \right)}{g}_G }^{2} \, \vline \, \mathcal{F}_{s} \right]  
 & = & \Exp \left[  \int_{s}^{t} \!\!\int_U \norm{Q_M^{1/2}\Phi^*(r,u)g}_H^2\operQuadraVari{M}(dr,du) \,\, \vline \,\, \mathcal{F}_{s} \right] \label{eqCondiItoIsometry}   \\
 & = &\Exp \left[  \int_{s}^{t} \!\!\int_U \inner{ \Phi(r,u) Q_M \Phi(r,u)^{*}g }{g}_G \, \operQuadraVari{M}(dr,du)  \, \vline \, \mathcal{F}_{s} \right].
 \nonumber
\end{eqnarray}
Moreover
\begin{align}
& \hspace{-1cm}\Exp \left[ \abs{ \inner{I_t(\Phi)}{g}_G }^{2} \, \vline \, \mathcal{F}_{s} \right] \nonumber \\ 
 & =   \abs{ \inner{I_s(\Phi)}{g}_G }^{2} + \Exp \left[  \int_{s}^{t} \!\!\int_U \norm{Q_M^{1/2}\Phi^*(r,u)g}_H^2\operQuadraVari{M}(dr,du)  \, \vline \, \mathcal{F}_{s} \right] \label{eqSecondConditionalMomentIntegral} \\
 & =  \abs{ \inner{I_s(\Phi)}{g}_G }^{2} + \Exp \left[  \int_{s}^{t} \!\!\int_U \inner{ \Phi(r,u) Q_M \Phi(r,u)^{*}g }{g}_G \, \operQuadraVari{M}(dr,du)  \, \vline \, \mathcal{F}_{s} \right] \nonumber
\end{align} 
\end{theorem}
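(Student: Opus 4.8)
The plan is to reduce both displayed identities to the unconditional It\^o isometry \eqref{eqItoIsometrySimpleIntegrands} by combining two elementary intertwining properties of the integral with the defining property of conditional expectation. I first record the pointwise (in $(\omega,r,u)$) algebraic identity
$$\norm{Q_M^{1/2}\Phi^*(r,u)g}_H^2 = \inner{Q_M(r,u)\Phi^*(r,u)g}{\Phi^*(r,u)g}_H = \inner{\Phi(r,u)Q_M\Phi(r,u)^*g}{g}_G,$$
which holds because $Q_M$ is symmetric (property \eqref{Qpositiveandsymmetric}) and $(\Phi\circ Q_M^{1/2})^*=Q_M^{1/2}\circ\Phi^*$. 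This already shows that the two right-hand sides of \eqref{eqCondiItoIsometry} coincide, and likewise the two of \eqref{eqSecondConditionalMomentIntegral}, so it suffices to prove the first form of each.

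Next I would derive a scalar (projected) version of the It\^o isometry. Fix $g\in G$ with $g\neq 0$ (the case $g=0$ is trivial) and let $\pi_g\in\mathcal{L}(G,G)$ be $\pi_g(y)=\inner{y}{g}_G\,g$, which is self-adjoint. For $\Phi\in\Lambda^2(M,T)$ the composition $\pi_g\circ\Phi$ again lies in $\Lambda^2(M,T)$, and summing over an orthonormal basis $\{e_k\}$ of $H$ gives, using the symmetry of $Q_M^{1/2}$, the pointwise identity $\norm{(\pi_g\circ\Phi)\circ Q_M^{1/2}}_{\mathcal{HS}(H,G)}^2=\norm{g}^2\,\norm{Q_M^{1/2}\Phi^*g}_H^2$. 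The key intertwining fact is $I_T(\pi_g\circ\Phi)=\pi_g\bigl(I_T(\Phi)\bigr)=\inner{I_T(\Phi)}{g}_G\,g$, which is immediate for simple integrands from \eqref{NewDefIntSimpleIntegrand} (replacing each $S_{i,j}$ by $\pi_g S_{i,j}$) and extends to all of $\Lambda^2(M,T)$ because both sides are continuous linear maps of $\Phi$ and $\mathcal{S}(M,T)$ is dense. Taking norms yields $\norm{I_T(\pi_g\circ\Phi)}^2=\norm{g}^2\,\abs{\inner{I_T(\Phi)}{g}_G}^2$, so feeding $\pi_g\circ\Phi$ into \eqref{eqItoIsometrySimpleIntegrands} and cancelling $\norm{g}^2$ produces the unconditional scalar isometry
$$\Exp\bigl[\abs{\inner{I_T(\Phi)}{g}_G}^2\bigr]=\Exp\int_{[0,T]\times U}\norm{Q_M^{1/2}\Phi^*(r,u)g}_H^2\,\operQuadraVari{M}(dr,du).$$

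To obtain \eqref{eqCondiItoIsometry} I would apply this last identity to $\Psi:=\caract_F\,\caract_{(s,t]}\,\Phi$ for an arbitrary $F\in\mathcal{F}_s$. The process $(\omega,r)\mapsto\caract_F(\omega)\caract_{(s,t]}(r)$ is predictable, so $\Psi\in\Lambda^2(M,T)$, and $I_T(\Psi)=\caract_F\,I_T(\caract_{(s,t]}\Phi)$ (again clear on $\mathcal{S}(M,T)$ and extended by continuity), whence $\abs{\inner{I_T(\Psi)}{g}_G}^2=\caract_F\,\abs{\inner{I_T(\caract_{(s,t]}\Phi)}{g}_G}^2$ and $\norm{Q_M^{1/2}\Psi^*g}_H^2=\caract_F\,\caract_{(s,t]}\,\norm{Q_M^{1/2}\Phi^*g}_H^2$. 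The scalar isometry then reads $\Exp\bigl[\caract_F\,\abs{\inner{I_T(\caract_{(s,t]}\Phi)}{g}_G}^2\bigr]=\Exp\bigl[\caract_F\int_{s}^{t}\!\!\int_U\norm{Q_M^{1/2}\Phi^*g}_H^2\,\operQuadraVari{M}(dr,du)\bigr]$, and since $F\in\mathcal{F}_s$ is arbitrary and both integrands are $\Prob$-integrable (by the isometry), this is exactly \eqref{eqCondiItoIsometry} by the definition of conditional expectation.

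Finally, \eqref{eqSecondConditionalMomentIntegral} follows from \eqref{eqCondiItoIsometry} together with the martingale structure. For fixed $g$ the real process $N_r:=\inner{I_r(\Phi)}{g}_G$ lies in $\mathcal{M}_T^2$, so $\Exp[N_t^2\mid\mathcal{F}_s]=N_s^2+\Exp[(N_t-N_s)^2\mid\mathcal{F}_s]$, the cross term vanishing because $N_s$ is $\mathcal{F}_s$-measurable and $\Exp[N_t-N_s\mid\mathcal{F}_s]=0$. Using the interval additivity $I_t(\Phi)-I_s(\Phi)=I_T(\caract_{(s,t]}\Phi)$ (valid on $\mathcal{S}(M,T)$ and extended by continuity) gives $N_t-N_s=\inner{I_T(\caract_{(s,t]}\Phi)}{g}_G$, and substituting \eqref{eqCondiItoIsometry} concludes the proof. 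I expect the only genuine work to be the verification of the three intertwining identities $I_T(\pi_g\circ\Phi)=\pi_gI_T(\Phi)$, $I_T(\caract_F\Phi)=\caract_FI_T(\Phi)$ and interval additivity; each is transparent on $\mathcal{S}(M,T)$ via \eqref{NewDefIntSimpleIntegrand} and then propagated to $\Lambda^2(M,T)$ by density and the continuity of $I$. This density step, and the care needed to confirm that every modified integrand remains in $\Lambda^2(M,T)$, is the main obstacle.
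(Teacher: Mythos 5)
Your proposal is correct and takes essentially the same route as the paper: both reduce the statement to a scalar It\^{o} isometry $\Exp\bigl[\abs{\inner{I_T(\Phi)}{g}_G}^2\bigr]=\Exp\int_{[0,T]\times U}\norm{Q_M^{1/2}\Phi^*g}_H^2\,\operQuadraVari{M}(dr,du)$, then localize by applying it to $\caract_F\,\caract_{(s,t]}\,\Phi$ for arbitrary $F\in\mathcal{F}_s$ to get \eqref{eqCondiItoIsometry}, and finally deduce \eqref{eqSecondConditionalMomentIntegral} from the interval additivity $I_t(\Phi)-I_s(\Phi)=I_T(\caract_{(s,t]}\Phi)$ together with the martingale property of $\inner{I_\cdot(\Phi)}{g}_G$. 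The only cosmetic difference is in the scalar reduction: the paper composes $\Phi$ with $\inner{\cdot}{g}_G$ to obtain the real-valued integrand $\Phi_g$ and cites Proposition 6.15 of \cite{CCFM:SPDE}, whereas you realize the same projection inside $G$ via the rank-one self-adjoint operator $\pi_g$, checked on simple integrands and extended by density --- the same mechanism, made self-contained.
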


\begin{proof}
Let $\Phi \in \Lambda^{2}(M,T)$ and $g \in G$. Define $\Phi_{g}$ as the family given by 
$$
\Phi_{g}(\omega,r,u)h = \inner{\Phi(\omega,r,u)h}{g}_G,\quad h\in H.
$$
In particular we have
$$
\Phi_{g}(\omega,r,u) Q_M^{1/2}(\omega, r, u)h=\inner{\Phi(\omega,r,u)Q_M^{1/2}(\omega, r, u) h}{g}_G, \quad h \in H,
$$
that is, $\Phi_{g}(\omega,r,u)Q_M^{1/2}(\omega, r, u)  \in \mathcal{L}(H,\R) \simeq H$. 

By the corresponding properties of $\Phi$ it is easy to conclude that $\Phi_{g} \in \Lambda^{2}(M,T,H,\R)$, hence, the stochastic integral $(I_{t}(\Phi_{g}): t \in [0,T])$ is a real-valued square integrable martingale. Therefore, by Proposition 6.15 in \cite{CCFM:SPDE} and the It\^{o} isometry we have
\begin{equation}\label{eqWeakItoIsometry}
\Exp \left[ \abs{\inner{I_{T}(\Phi)}{g}_G}^{2} \right] =\Exp \left[ \abs{I_{T}(\Phi_{g})}^{2} \right] = \int_{\Omega} \int_{0}^{T} \!\!\int_U \norm{Q_M^{1/2}\Phi^*(r,u) g}_H^{2} d\mu_M.    
\end{equation}

Let $0 \leq s < t \leq T$ and choose any $F \in \mathcal{F}_{s}$. Replacing $\Phi$ with $\mathbbm{1}_{(s,t]\times F}\Phi$ in \eqref{eqWeakItoIsometry} we obtain
$$\Exp \left[ \mathbbm{1}_{F} \abs{\inner{I_{T}(\mathbbm{1}_{(s,t]}\Phi)}{g}}^{2} \right] = \int_F \int_{s}^{t} \!\!\int_U \norm{Q_M^{1/2} \Phi^*(r,u) g}_H^{2} d\mu_M ,
$$
and since $F \in \mathcal{F}_{s}$ is arbitrary this shows \eqref{eqCondiItoIsometry}. In order to prove \eqref{eqSecondConditionalMomentIntegral} observe that, by Proposition 4.21 in \cite{AlvaradoFonseca:2021} we have $I_{T}(\mathbbm{1}_{(s,t]}\Phi)=\left(I_{t}(\Phi)-I_{s}(\Phi) \right)$.
Therefore
\begin{equation*}
\abs{\inner{I_{T}(\mathbbm{1}_{(s,t]}\Phi)}{g}}^{2}= \abs{\inner{I_{t}(\Phi)}{g}}^{2}-2\inner{I_{t}(\Phi)}{g}\inner{I_{s}(\Phi)}{g}+\abs{\inner{I_{s}(\Phi)}{g}}^{2}. 
\end{equation*}
Taking conditional expectation with respect to $\mathcal{F}_{s}$ and using that $(\inner{I_{r}(\Phi)}{g}: r \in [0,T])$ is a real-valued martingale we conclude that 
$$
\Exp \left[ \abs{\inner{I_{T}(\mathbbm{1}_{(s,t]}\Phi)}{g}_G}^{2} \, \vline \, \mathcal{F}_{s} \right] 
=  \Exp \left[ \abs{\inner{I_{t}(\Phi)}{g}_G}^{2} \, \vline \, \mathcal{F}_{s} \right] - \abs{\inner{I_{s}(\Phi)}{g}_G}^{2}.
$$
From the above equality and  \eqref{eqCondiItoIsometry} we deduce \eqref{eqSecondConditionalMomentIntegral}.
\end{proof}

\begin{corollary}
For every $\Phi \in \Lambda^{2}(M,T)$, $0 \leq s < t \leq T$, and $g_{1}, g_{2} \in G$, we have
\begin{multline}\label{eqInnerProductCondiItoIsometry}
\Exp \left[ \inner{I_t(\Phi)}{g_{1}}_G \inner{I_t(\Phi)}{g_{2}}_G \, \vline \, \mathcal{F}_{s} \right]
=   \inner{I_s(\Phi)}{g_{1}}_G\inner{I_s(\Phi)}{g_{2}}_G \\ + \Exp \left[ \int_{s}^{t} \!\!\int_U \inner{ \Phi(r,u) Q_M \Phi(r,u)^{*}g_{1} }{g_{2}}_G \, \operQuadraVari{M}(dr,du)  \, \vline \, \mathcal{F}_{s} \right]     
\end{multline}
\end{corollary}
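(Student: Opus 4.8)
The plan is to deduce \eqref{eqInnerProductCondiItoIsometry} from the quadratic identity \eqref{eqSecondConditionalMomentIntegral} by polarization. Both sides of \eqref{eqInnerProductCondiItoIsometry}, viewed as functions of the pair $(g_{1},g_{2})$, are symmetric bilinear forms: on the left, $(g_{1},g_{2}) \mapsto \inner{I_t(\Phi)}{g_{1}}_G \inner{I_t(\Phi)}{g_{2}}_G$ is manifestly bilinear and symmetric; on the right, the first summand has the same structure, while the integrand $(g_{1},g_{2}) \mapsto \inner{\Phi(r,u) Q_M \Phi(r,u)^{*}g_{1}}{g_{2}}_G$ is bilinear and symmetric because $Q_M$ is symmetric $\operQuadraVari{M}$-a.e.\ (property \ref{Qpositiveandsymmetric}), so that $\Phi Q_M \Phi^{*}$ is a symmetric operator on $G$. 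Since the conditional expectation and the Bochner integral on the right-hand side are linear, it therefore suffices to check that the two sides agree on the diagonal $g_{1}=g_{2}$, which is exactly the content of the second equality in \eqref{eqSecondConditionalMomentIntegral}.

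Concretely, I would apply \eqref{eqSecondConditionalMomentIntegral} first with $g = g_{1}+g_{2}$ and then with $g = g_{1}-g_{2}$, subtract the two resulting identities, and divide by four. On the left-hand side, expanding $\abs{\inner{I_t(\Phi)}{g_{1}+g_{2}}_G}^{2} - \abs{\inner{I_t(\Phi)}{g_{1}-g_{2}}_G}^{2}$ by bilinearity collapses to $4\,\inner{I_t(\Phi)}{g_{1}}_G \inner{I_t(\Phi)}{g_{2}}_G$, and the $I_s(\Phi)$ term on the right is handled identically. For the integral term, expanding $\inner{\Phi Q_M \Phi^{*}(g_{1}+g_{2})}{g_{1}+g_{2}}_G - \inner{\Phi Q_M \Phi^{*}(g_{1}-g_{2})}{g_{1}-g_{2}}_G$ and using the symmetry of $\Phi Q_M \Phi^{*}$ to merge the two mixed contributions yields precisely $4\,\inner{\Phi Q_M \Phi^{*}g_{1}}{g_{2}}_G$. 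After dividing by four, this gives \eqref{eqInnerProductCondiItoIsometry}.

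There is essentially no serious obstacle; the argument is a routine polarization. The only point requiring care is the symmetry of the integrand, which ensures that the two off-diagonal contributions in the polarization coincide rather than merely summing to a symmetrized combination, and this is secured by the symmetry of $Q_M$ recorded in the preliminaries. One should also note that applying \eqref{eqSecondConditionalMomentIntegral} to $g_{1} \pm g_{2}$ is legitimate because that identity holds for every element of $G$, and the required integrability is inherited from $\Phi \in \Lambda^{2}(M,T)$, so no additional hypotheses are needed.
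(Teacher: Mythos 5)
Your argument is correct and is precisely the paper's proof: the corollary is deduced from \eqref{eqSecondConditionalMomentIntegral} by the polarization identity (the paper states this in one line, while you supply the routine details, including the symmetry of $\Phi Q_M \Phi^{*}$ coming from property \ref{Qpositiveandsymmetric}). No gaps; nothing further is needed.
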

\begin{proof}
The result can be obtained from \eqref{eqSecondConditionalMomentIntegral} by the polarization identity. 
\end{proof} 

\subsection{Predictable quadratic variation of the stochastic integral}\label{subSectPrediQuadVariStochInteg}

Both the operator-valued and real-valued (predictable) quadratic variation of the stochastic integral are calculated in the following result.

\begin{theorem}\label{theoQuadraVariaStochIntegral}
Let $\Phi \in \Lambda^{2}(M,T)$ and consider the stochastic integral process  
$$ I_{t} = \int^{t}_{0} \!\!\int_U \Phi (r,u) M (dr, du).$$ 
\begin{enumerate}
    \item The predictable quadratic variation of $I$ is given by 
\begin{equation}\label{eqQuadraticVariationIntegral}
\quadraVari{ I }_t = \int_{0}^{t} \!\!\int_U  \norm{ \Phi(r,u)Q_M^{1/2}(r,u)}_{\mathcal{HS}(H,G)}^{2} \operQuadraVari{M}(dr,du).       
\end{equation}
\item The predictable operator quadratic variation of $I$ is given by \begin{equation}\label{eqOperatorQuadraticVariationIntegral}
\operQuadraVari{I}_{t}   =  \int_{0}^{t} \!\!\int_U  \Phi(r,u) Q_M(r,u) \Phi(r,u)^{*}  \operQuadraVari{M}(dr,du).
\end{equation}
\end{enumerate}
\end{theorem}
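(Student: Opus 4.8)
The plan is to establish part (ii) first and then deduce part (i) by taking the trace, since the preliminaries record $\quadraVari{Y}_t = \mbox{trace}(\operQuadraVari{Y}_t)$ for any $G$-valued square integrable martingale $Y$. Denote the candidate operator process by
$$V_t := \int_{0}^{t} \!\!\int_U \Phi(r,u) Q_M(r,u) \Phi(r,u)^{*} \, \operQuadraVari{M}(dr,du).$$
First I would check that $V$ is well defined as a process with values in the nonnegative definite trace-class operators on $G$. Each integrand $\Phi Q_M \Phi^{*}$ is nonnegative, symmetric and trace-class (because $Q_M$ is positive and symmetric), with $\mbox{trace}(\Phi Q_M \Phi^{*}) = \norm{\Phi Q_M^{1/2}}_{\mathcal{HS}(H,G)}^{2}$, and the latter is $\operQuadraVari{M}$-integrable by the integrability condition defining $\Lambda^{2}(M,T)$. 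Hence the Bochner integral converges in $\mathcal{L}_{1}(G)$, $V_t$ is nonnegative definite and trace-class, $V_0 = 0$, and $V$ is increasing in the operator order. Predictability of $V$ follows from the predictability of the integrand (via the predictability of $Q_M$ together with the assumption on $\Phi \circ Q_M^{1/2}$) and from the fact that $\operQuadraVari{M}$ is a predictable random measure.

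Next I would verify the defining martingale property. By the polarized conditional isometry \eqref{eqInnerProductCondiItoIsometry}, for all $g_1, g_2 \in G$ and $0 \le s < t \le T$,
$$\Exp \left[ \inner{I_t}{g_1}_G \inner{I_t}{g_2}_G \, \vline \, \mathcal{F}_{s} \right] - \inner{I_s}{g_1}_G \inner{I_s}{g_2}_G = \Exp \left[ \inner{V_t(g_1)}{g_2}_G - \inner{V_s(g_1)}{g_2}_G \, \vline \, \mathcal{F}_{s} \right],$$
since $\inner{V_t(g_1)}{g_2}_G = \int_{0}^{t} \!\!\int_U \inner{\Phi(r,u) Q_M \Phi(r,u)^{*} g_1}{g_2}_G \, \operQuadraVari{M}(dr,du)$. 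This is exactly the statement that $\inner{I_t}{g_1}_G \inner{I_t}{g_2}_G - \inner{V_t(g_1)}{g_2}_G$ is a martingale, the required integrability being supplied by $I_t \in \mathcal{M}_T^{2}(G)$. As $V$ is a predictable, nonnegative definite, trace-class-valued process with $V_0 = 0$ satisfying this property for every $g_1, g_2$, the uniqueness of the operator quadratic variation recalled in the preliminaries yields $\operQuadraVari{I}_t = V_t$, which is \eqref{eqOperatorQuadraticVariationIntegral}.

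For part (i) I would take the trace in \eqref{eqOperatorQuadraticVariationIntegral}. Since the trace is continuous on $\mathcal{L}_{1}(G)$ it commutes with the Bochner integral, and writing $Q_M = Q_M^{1/2} Q_M^{1/2}$ with $Q_M^{1/2}$ self-adjoint gives $\mbox{trace}(\Phi Q_M \Phi^{*}) = \mbox{trace}\big( (\Phi Q_M^{1/2})(\Phi Q_M^{1/2})^{*}\big) = \norm{\Phi Q_M^{1/2}}_{\mathcal{HS}(H,G)}^{2}$; together with $\quadraVari{I}_t = \mbox{trace}(\operQuadraVari{I}_t)$ this produces \eqref{eqQuadraticVariationIntegral}. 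Alternatively, \eqref{eqQuadraticVariationIntegral} can be proved directly by expanding $\norm{I_t}_G^2 = \sum_k \inner{I_t}{g_k}_G^2$ in an orthonormal basis $\{g_k\}$ of $G$, applying the scalar conditional isometry \eqref{eqSecondConditionalMomentIntegral} to each term, summing by monotone convergence, and using $\sum_k \norm{Q_M^{1/2} \Phi^{*} g_k}_H^2 = \norm{\Phi Q_M^{1/2}}_{\mathcal{HS}(H,G)}^{2}$. The only genuinely technical point is the verification that $V$ is a bona fide predictable trace-class-valued process with $V_0 = 0$; the martingale identity itself is immediate from the conditional isometry, so the main obstacle is the measure-theoretic bookkeeping for the $\mathcal{L}_{1}(G)$-valued integral rather than any delicate probabilistic estimate.
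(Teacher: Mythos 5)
Your proposal is correct and follows essentially the same route as the paper: part (ii) is established first by checking that the Bochner integral defines a predictable $\mathcal{L}_{1}(G)$-valued process (via the trace identity $\mathrm{trace}(\Phi Q_M \Phi^{*}) = \norm{\Phi Q_M^{1/2}}_{\mathcal{HS}(H,G)}^{2}$ and the $\Lambda^{2}(M,T)$ integrability condition) and then verifying the defining martingale property through the polarized conditional It\^{o} isometry \eqref{eqInnerProductCondiItoIsometry}, after which part (i) follows by taking the trace in an orthonormal basis, exactly as in the paper. The only cosmetic difference is that you state the appeal to uniqueness of the operator quadratic variation explicitly, which the paper leaves implicit.
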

\begin{proof}
We first prove  \eqref{eqOperatorQuadraticVariationIntegral}. Since $\Phi \in \Lambda^{2}(M,T)$, it follows that 
$$
\Phi(\omega,r,u) Q_M(\omega,r,u) \Phi(\omega,r,u)^{*} \in \mathcal{L}_{1}(G,G),
$$
for each $r \in [0,T]$, $\omega \in \Omega$, $u \in U$. It is clear that
for every $g_{1}, g_{2} \in G$, the mapping 
$$ (\omega,r,u) \mapsto \inner{\Phi(\omega,r,u) Q_M(\omega,r,u) \Phi(\omega,r,u)^{*}g_{1}}{g_{2}}_G, $$ 
is $\mathcal{P}_{T} \otimes \mathcal{B}(U)/\mathcal{B}(\R_{+})$-measurable. Therefore the $ \mathcal{L}_{1}(G,G)$-valued process
$$(\Phi(\omega,r,u) Q_M(\omega,r,u) \Phi(\omega,r,u)^{*} : r \in [0,T], \omega \in \Omega, u \in U)$$ is weakly predictable, hence (strongly) predictable since the the separability of $G$ implies that of $ \mathcal{L}_{1}(G,G)$.\medskip

Now let $(g_{n}: n \in \N)$ be any orthonormal basis in $G$. For every $t \in [0,T]$ we have 
\begin{align*}
& \hspace{-2cm}\int_{\Omega \times [0,t] \times U} \norm{ \Phi(r,u) Q_M \Phi(r,u)^{*} }_{\mathcal{L}_{1}(G,G)} \, d\mu_M \\
&=  \int_{\Omega \times [0,t] \times U} \sum_{n =1}^{\infty}  \inner{\Phi(r,u) Q_M \Phi(r,u)^{*}g_{n}}{g_{n}} \, d\mu_M \\
&= \int_{\Omega \times [0,t] \times U} \sum_{n =1}^{\infty} \inner{    Q_M^{1/2}\Phi(r,u)^{*}g_{n}}{Q_M^{1/2}\Phi(r,u)^{*} g_{n}} \, d\mu_M \\
&= \int_{\Omega \times [0,t] \times U} \sum_{n =1}^{\infty}  \norm{ Q_M^{1/2}  \Phi(r,u)^{*}g_n }_G^2 \, d\mu_M\\
&= \int_{\Omega \times [0,t] \times U}  \norm{  \Phi(r,u) Q_M^{1/2}}_{\mathcal{HS}(H,G)}^{2} \, d\mu_M<\infty.
\end{align*}

Therefore, for every $t \in [0,T]$, the right-hand side in \eqref{eqOperatorQuadraticVariationIntegral} is a well-defined Bochner integral with values in $G$ for $\Prob$-a.e. $\omega \in \Omega$.  Define 
$$
\displaystyle J_t = \int_{0}^{t}\!\!\int_U  \Phi(r,u) Q_M \Phi(r,u)^{*}   \operQuadraVari{M}(dr,du),
\ t \in [0,T].
$$
We must show that for any $g_{1}, g_{2} \in G$, the real-valued process
\begin{equation}\label{eqMartingaleQuadraticVariationIntegral}
\inner{I_{t}}{g_{1}}_G\inner{I_{t}}{g_{2}}_G- \inner{ J_t g_{1}}{g_{2}}_G  
\end{equation}
is a martingale. 

In fact, let  $0 \leq s < t \leq T$ and $g_{1}, g_{2} \in G$. Taking conditional expectation in \eqref{eqMartingaleQuadraticVariationIntegral} with respect to $\mathcal{F}_{s}$, and then applying \eqref{eqInnerProductCondiItoIsometry}, we obtain
$$
\Exp \left(\inner{I_{t}}{g_{1}}_G \inner{I_{t}}{g_{2}}_G - \inner{J_t g_{1}}{g_{2}}_G  \, \vline \, \mathcal{F}_{s} \right)
= \inner{I_s}{g_{1}}_G \inner{I_s}{g_{2}}_G - \inner{J_s g_{1} }{g_{2}}_G.  
$$
This shows that the real-valued process given by \eqref{eqMartingaleQuadraticVariationIntegral} is a martingale. Thus \eqref{eqOperatorQuadraticVariationIntegral} follows.\medskip

To prove \eqref{eqQuadraticVariationIntegral}, observe that since $\Phi \in \Lambda^{2}(M,T)$ the integral in the right-hand side of \eqref{eqQuadraticVariationIntegral} is a well-defined Lebesgue integral for $\Prob$-a.e. $\omega \in \Omega$ (see also Remark \ref{remarkFinitudQVTU}).

Let $(g_{n}: n \in \N)$ be any orthonormal basis in $G$. Then, by \eqref{eqOperatorQuadraticVariationIntegral} we  have
\begin{eqnarray*}
\quadraVari{ I }_{t} 
& = & \mbox{trace}\left( \operQuadraVari{ I}_{t}  \right) = \sum_{n =1}^{\infty} \inner{ J_tg_{n}}{g_{n}}_G \\
& = & \int_{0}^{t} \!\!\int_U \sum_{n =1}^{\infty}\norm{ Q_M^{1/2} \Phi(r,u)^{*}g_n }_G^2 \operQuadraVari{M}(dr,du) \\
& = & \int_{0}^{t} \!\!\int_U  \norm{ \Phi(r,u)Q_M^{1/2}(r,u)}_{\mathcal{HS}(H,G)}^{2} \operQuadraVari{M}(dr,du).
\end{eqnarray*}
Which proves the desired result.
\end{proof}

The next result delves deeper into the relation between our theory of stochastic integration for cylindrical martingale-valued measures and the corresponding theory of stochastic integration for Hilbert space-valued square integrable martingales (defined for example in \cite{MetivierPellaumail, Metivier}). 
In particular,  we show an associativity property for the stochastic integral; this result will play an important role in the proof of our It\^{o} formula.   

\begin{theorem}\label{theoAssociatiStochIntegral}
For $\Phi \in \Lambda^2(M, T;H,G)$, consider the $G$-valued stochastic integral
$$
Y_{\cdot}=\int_0^{\cdot}\!\! \int_U \Phi(s, u) M(ds,du) \in \mathcal{M}^2(G)
$$
Denote by $\langle Y \rangle$ the predictable quadratic variation of $Y$ and by $Q_Y$ its corresponding (co)variance operator. Let $\Psi: \Omega \times [0,T] \rightarrow \mathcal{L}(G,K)$ with the property that for every $g \in G$ the $K$-valued process $\Psi \circ Q_Y^{1 / 2} (g)$ is predictable.  Then, $\Psi$ is stochasticaly integrable with respect to $Y$, that is,
$$
\mathbb{E} \int_0^T\left\|\Psi(s) Q_Y^{1 / 2}\right\|_{\mathcal{HS}(G,K)}^2 d\langle Y \rangle_s < \infty,
$$
if and only if $\Psi \circ \Phi \in  \Lambda^2(M, T;H,K)$.  Moreover
\begin{equation}
\label{eqAssociativityOfStochIntegral}
    \int_0^t \Psi(s)dY_s = \int_0^t \!\!\int_U \Psi(s) \circ \Phi(s,u) M(ds,du).
\end{equation}
\end{theorem}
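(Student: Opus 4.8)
The plan is to reduce the identity to the case where $\Psi$ is a simple integrand and then pass to the limit, the whole argument being driven by the operator quadratic variation formula of Theorem \ref{theoQuadraVariaStochIntegral},
$$\operQuadraVari{Y}_t = \int_0^t \!\!\int_U \Phi(r,u) Q_M(r,u)\Phi(r,u)^* \operQuadraVari{M}(dr,du),$$
which expresses $\operQuadraVari{Y}$ as a $U$-integral against $\operQuadraVari{M}$. Since in the Hilbert-space-valued theory one has $\operQuadraVari{Y}_t = \int_0^t Q_Y(s)\, d\quadraVari{Y}_s$, this is precisely the relation tying the covariance $Q_Y$ of $Y$ back to $\Phi$ and $Q_M$, and it is what will make the two seminorms (and ultimately the two integrals) agree.

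\textbf{Step 1 (integrability and well-definedness).} First I would verify that $\Psi\circ\Phi \in \Lambda^2(M,T;H,K)$. Writing the Hilbert-Schmidt norm as a trace over an orthonormal basis of $K$ and combining the identity above with Fubini's theorem for the Dol\'eans measure $\mu_M$, I expect the seminorm identity
$$\Exp\int_{[0,T]\times U}\norm{\Psi(s)\Phi(s,u)Q_M^{1/2}(s,u)}^2_{\mathcal{HS}(H,K)} \operQuadraVari{M}(ds,du) = \Exp\int_0^T \norm{\Psi(s) Q_Y^{1/2}}^2_{\mathcal{HS}(G,K)} d\quadraVari{Y}_s,$$
so that finiteness of $\norm{\Psi\circ\Phi}_{\Lambda^2(M,T)}$ is inherited from $\Psi\in\Lambda^2(Y,T)$. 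Predictability of $(\Psi\circ\Phi)\circ Q_M^{1/2}(h) = \Psi\circ(\Phi\circ Q_M^{1/2}(h))$ should follow from that of $\Phi\circ Q_M^{1/2}(h)$ and of $\Psi$. The delicate point, which I regard as the main obstacle, is that $\Psi(s)$ is only defined on $G_{Q_Y}=Q_Y^{1/2}(G)$, so one must first establish that $\Phi(s,u)Q_M^{1/2}(s,u)$ actually takes values in $G_{Q_Y}$ for $\mu_M$-a.e. $(\omega,s,u)$, so that the composition is meaningful. Morally this comes from comparing $\operQuadraVari{Y}_t=\int_0^t Q_Y\, d\quadraVari{Y}$ with the $U$-integral representation, which yields a domination of $\Phi Q_M\Phi^*$ by $Q_Y$ and hence, by a Douglas-type factorization, the required range inclusion $\mu_M$-a.e.; making this rigorous together with the predictability of the composed integrand is where the care is needed.

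\textbf{Step 2 (simple integrands).} For $\Psi(s)=\mathbbm{1}_{(a,b]}(s)\mathbbm{1}_F\, S$ with $F\in\mathcal{F}_a$ and $S\in\mathcal{L}(G,K)$, the left side of \eqref{eqAssociativityOfStochIntegral} equals $\mathbbm{1}_F S\,(Y_{b\wedge t}-Y_{a\wedge t})$ by the definition of the integral against the $G$-valued martingale $Y$. For the right side I would use two elementary properties of $I(\cdot)$: that left-composition by the fixed operator $S$ pulls out of the integral, checked on simple $\Phi$ from the defining relation $(Y_{i,j}(t),g)_G=M((s_i\wedge t,t_i\wedge t],A_j)(S_{ij}^*g)$ together with $(S S_{ij})^*=S_{ij}^*S^*$; and that multiplication by $\mathbbm{1}_{(a,b]}(s)\mathbbm{1}_F$ restricts the integral to the interval $(a\wedge t,b\wedge t]$ and factors out the $\mathcal{F}_a$-indicator, via $I_T(\mathbbm{1}_{(a,b]}\Phi)=I_b(\Phi)-I_a(\Phi)$ (Proposition 4.21 of \cite{AlvaradoFonseca:2021}). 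Together these give $\int_0^t\!\int_U \Psi(s)\Phi(s,u)M(ds,du)=\mathbbm{1}_F S\,(Y_{b\wedge t}-Y_{a\wedge t})$, matching the left side, and linearity extends the identity to all simple $\Psi$.

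\textbf{Step 3 (approximation).} Finally, choosing simple $\Psi_n\to\Psi$ in $\Lambda^2(Y,T)$, the integrals $\int_0^\cdot \Psi_n\,dY$ converge to $\int_0^\cdot \Psi\,dY$ in $\mathcal{M}^2(K)$ by continuity of the integral against $Y$. By the isometry of Step 1, $\norm{(\Psi_n-\Psi)\circ\Phi}_{\Lambda^2(M,T)}=\norm{\Psi_n-\Psi}_{\Lambda^2(Y,T)}\to 0$, so $\int_0^\cdot\!\int_U\Psi_n\Phi\,M$ converges to $\int_0^\cdot\!\int_U\Psi\Phi\,M$ in $\mathcal{M}^2(K)$ by continuity of $I$. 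Since the two sides coincide for every $n$ by Step 2, they coincide in the limit, which yields \eqref{eqAssociativityOfStochIntegral}.
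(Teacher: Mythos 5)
Your proposal is correct and follows essentially the same route as the paper: the seminorm identity driven by the operator quadratic variation formula \eqref{eqOperatorQuadraticVariationIntegral} of Theorem \ref{theoQuadraVariaStochIntegral}, verification of \eqref{eqAssociativityOfStochIntegral} on elementary integrands via $I_T(\mathbbm{1}_{(a,b]}\Phi)=I_b(\Phi)-I_a(\Phi)$ and pulling the fixed operator through the integral, then extension by linearity and density. If anything you are more careful than the paper, which proves the isometry only for elementary $\Psi$ (where $R\in\mathcal{HS}(G,K)$ is everywhere defined, so no domain issue arises) and dispatches the general case with a one-line density remark; your observation that for general $\Psi$ one must first establish the range inclusion of $\Phi(s,u)Q_M^{1/2}(s,u)$ in $G_{Q_Y}$, via domination of $\Phi Q_M \Phi^{*}$ by $Q_Y$ and a Douglas-type factorization, addresses a genuine subtlety the paper leaves implicit.
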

\begin{proof} 
First, note that by \eqref{eqQuadraticVariationIntegral}
$$
\quadraVari{Y}_t = \int_0^t \!\! \int_U \| \Phi(s,u) Q_M^{1/2} \|^2_{\mathcal{HS}(H,G)} \operQuadraVari{M}(ds,du),
$$
and an easy computation shows that $\displaystyle Q_Y = \frac{ \Phi Q_M \Phi^* }{ \| \Phi \circ Q^{1/2}   \|^2_{\mathcal{HS}(H,G)} }$. Then we have
\begin{align*}
\mathbb{E} \int_0^T \norm{\Psi(r)Q_Y^{1/2}}_{\mathcal{HS}(G,K)}^2 d\langle Y \rangle_s & = \mathbb{E} \int_0^T \operatorname{Tr}(\Psi \circ Q_Y \circ \Psi^*) d\langle Y \rangle_s \\
& = \mathbb{E} \int_0^T \operatorname{Tr}(\Psi \circ \Phi \circ Q_M \circ \Phi^* \circ \Psi^*) \operQuadraVari{M}(ds,du) \\
& = \mathbb{E} \int_0^T \norm{\Psi(s) \Phi(s,u)Q_M^{1/2}}_{\mathcal{HS}(H,K)}^2 \operQuadraVari{M}(ds,du).
\end{align*}

This proves that $\Psi$ is integrable with respect to $Y$ if an only if $\Psi \circ \Phi \in  \Lambda^2(M, T;H,K)$.  
To prove \eqref{eqAssociativityOfStochIntegral}, we first look at a general integrand $\Phi$ and the elementary integrand: 
$\Psi(\omega,r) = \caract_F (\omega) \caract_{(s,t]}(r) R$, where $0\leq s<t \leq T$, $F \in \mathcal{F}_{s}$ and $R\in \mathcal{L}(G,K)$. By Proposition 6.16 in \cite{CCFM:SPDE} we have 
\begin{eqnarray*}
    \int_0^T \Psi(r) dY_r & = & \caract_{F} R(Y_t-Y_s) = \caract_{F} R\left( \int_s^t \!\!\int_U \Phi(r,u) M(dr,du) \right) \\
    & = & \int_0^T \!\! \int_U \, \caract_F(\omega) \caract_{(s,t]} R\circ \Phi(r,s) \,  M(dr,du) \\
    & = & \int_0^T \!\! \int_U \, \Psi \circ \Phi(r,s) \, M(dr,du).
\end{eqnarray*} 
By linearity, this extends to simple functions, and by density to all $\Psi$ (see Proposition 14.5 in \cite{MetivierPellaumail}).
\end{proof}

\subsection{Continuous and purely discontinuous part of a cylindrical martingale-valued measure}\label{subSecContAndDisconCMVM}

Given a real-valued martingale $m$, denote respectively by $m^{c}$ and $m^{d}$ the continuous martingale part and the purely discontinuous martingale part of $m$. 
We can decompose $M$ as the sum of a continuous-path cylindrical  martingale-valued measure $M^{c}$ and a purely discontinuous one $M^d$, by defining for every $t\geq 0$, $A \in \mathcal{A}$ and $h \in H$,  
$$
M^{c}(t,A)(h)=[M(t,A)(h)]^{c}, \quad M^{d}(t,A)(h)=[M(t,A)(h)]^{d}.
$$
Let's prove that these two processes are in fact cylindrical martingale-valued measures. By definition of $M^c$ and $M^d$, for fixed $A \in \mathcal{A}$, $\alpha \in \mathbb{R}$, $h_1, h_2 \in H$ we have
\begin{align*}
M(t, A)(\alpha h_1 + h_2) & = \alpha M^c(t, A)(\alpha h_1 + h_2) +  M^d(t, A)(\alpha h_1 + h_2).
\end{align*}
On the other hand, by linearity of $M$,
\begin{align*}
M(t, A)(\alpha h_1 + h_2) & = \alpha M(t, A)(h_1) + M(t, A)(h_2) \\
& = \alpha [M^c(t, A)(h_1) + M^d(t, A)(h_1)] + [M^c(t, A)(h_2) + M^d(t, A)(h_2)] \\
& = [\alpha M^c(t, A)(h_1) + M^c(t, A)(h_2)] + [\alpha M^d(t, A)(h_1) + M^d(t, A)(h_2)]. 
\end{align*}
Since the first two terms on the right hand side are continuous, and the other two are purely discontinuous,  the uniqueness of the representation gives us
$$
M^c(t, A)(\alpha h_1 + h_2) = \alpha M^c(t, A)(h_1) + M^c(t, A)(h_2),
$$
and likewise for $M^d$.  This proves that for fixed $A \in \mathcal{A}$ and $t \geq 0$, $M^c(t, A)$ and $M^d(t, A)$ are linear continuous maps from $H$  to $L^{2} \ProbSpace$.
For $t > 0$ and $h \in H$, the $\sigma$-finiteness of $M^c(t,\cdot)(h)$ and $M^d(t,\cdot)(h)$ as $L^2$-valued measures follow directly from their definition, and the corresponding $\sigma$-finiteness as real-valued martingales.

\begin{remark}
    The orthogonality of $M$ does not necessarily imply that of $M^c$ and $M^d$, even in simple real valued cases.  For example, take $U =\{ 0, 1 \}$, and define $M(t, \{0\}) = W_t + \widetilde{N}_t$ and $M(t, \{1\}) = W_t - \widetilde{N}_t$, where $W$ is a standard Browninan motion and $\widetilde{N}_t$ is a compensated Poisson process with intensity $1$. Observe that in this case, $$M^c(t, A) = W_t, \qquad M^d(t, A) = \begin{cases}
    \widetilde{N}_t, & A=\{0\}, \\
    -\widetilde{N}_t, & A=\{1\}.
    \end{cases}$$
    Easy computations show that $\quadraVari{ M(\{0\}), M(\{1\}) }_t = 0$ (and hence is orthogonal), however
    $$
    \quadraVari{ M^c(\{0\}), M^c(\{1\}) }_t = \quadraVari{ M^d(\{0\}), M^d(\{1\}) }_t = t. 
    $$
\end{remark}

For further developments we will require that both $M^{c}$ and $M^{d}$ are orthogonal, and that each one has a unique predictable quadratic variation. 

\begin{assumption}
From now on we assume the following:
\begin{enumerate}
    \item Each one of the cylindrical martingale-valued measures $M^{c}$  and $M^{d}$ is orthogonal. 
    \item The family of intensity measures $(\nu^{c}_{h}: h \in H)$ of $M^{c}$ and $(\nu^{d}_{h}: h \in H)$ of $M^{d}$  satisfies the sequential boundedness property.
\end{enumerate}
\end{assumption}

Observe that the first assumption guarantees the existence of the intensity measures $(\nu^{c}_{h}: h \in H)$ and $(\nu^{d}_{h}: h \in H)$  of $M^{c}$ and $M^{d}$ respectively (see Lemma 5.1 in \cite{CCFM:SPDE}). 

We must show that $M^{c}$ possesses a unique predictable quadratic variation and that $\alpha_{M^{c}}$, $\Gamma_{M^{c}}$ and $Q_{M^{c}}$ exist. First, for every $h \in H$, $0 \leq s <t$ and $A \in \mathcal{A}$, we have $\Prob$-a.e. 
$$\nu^{c}_{h}((s,t] \times A)=\quadraVari{M^{c}(A)(h)}_{t}-\quadraVari{M^{c}(A)(h)}_{s}   \leq \quadraVari{M(A)(h)}_{t}-\quadraVari{M(A)(h)}_{s} = \nu_{h}((s,t] \times A).   $$
Therefore,  for every $h \in H$ by Theorem 3.9 in \cite{CCFM:SPDE} we have $\nu_h^c \leq \nu_h$, more precisely
$$ \Prob \left( \nu^{c}_{h}(C) \leq \nu_{h}(C)  : C \in \mathcal{B}(\R_{+}) \otimes \mathcal{B}(U) \right)=1. $$

Now let $(h_{n}:n \in \N)$ be a dense subset in the unit sphere of $H$. Then $\Prob$-a.e for every $t>0$ and $A \in \mathcal{A}$, by Proposition 5.4 in  \cite{CCFM:SPDE} we have
$$\sup_{n \geq 1} \nu^{c}_{h_{n}} ([0,t]\times A)\leq \sup_{n \geq 1} \nu_{h_{n}}([0,t] \times A) \leq \operQuadraVari{M}([0,t] \times A)<\infty.$$
By Theorem 5.8 in  \cite{CCFM:SPDE}  (or rather its proof) $M^{c}$ has a unique predictable quadratic variation $\operQuadraVari{M^{c}}$ which satisfies $\operQuadraVari{M^{c}}=\sup_{n \geq 1} \nu^{c}_{h_n}$ $\Prob$-a.e. (in particular, $\operQuadraVari{M^{c}} \leq \operQuadraVari{M}$ $\Prob$-a.e.). 
Furthermore, by \eqref{eqBoundedrangequadraticvariation} we have 
for $\Prob$-a.e. $\omega \in \Omega$,
$$ \sup_{A \in \mathcal{A}} \operQuadraVari{M^{c}}(\omega)([0,T]\times A) \leq \sup_{A \in \mathcal{A}} \operQuadraVari{M}(\omega)([0,T]\times A) <\infty.
 $$
Therefore, we can guarantee the existence of the  $\goth{Bil}(H,H)$-valued measure $\alpha_{M^{c}}$, the $\mathcal{L}(H,H)$-valued random measure $\Gamma_{M^{c}}$ and the process $Q_{M^{c}}: \Omega \times [0,T]  \times U \rightarrow \mathcal{L}(H,H)$ satisfying \eqref{existenceofQ}. 

The above arguments are also valid for  $M^{d}$, therefore 
$M^{d}$ has a unique predictable quadratic variation $\langle\!\langle M^d \rangle\!\rangle =\sup_{n \geq 1} \nu^{d}_{h}$ $\Prob$-a.e., with   $\langle\!\langle M^d \rangle\!\rangle \leq \operQuadraVari{M}$ $\Prob$-a.e., the $\goth{Bil}(H,H)$-valued measure $\alpha_{M^{d}}$, the $\mathcal{L}(H,H)$-valued random measure $\Gamma_{M^{d}}$ and the process $Q_{M^{d}}: \Omega \times [0,T]  \times U \rightarrow \mathcal{L}(H,H)$ satisfying \eqref{existenceofQ}. 

The following result clarifies the relation between $Q_{M}$, $Q_{M^{c}}$ and $Q_{M^{d}}$. 

\begin{lemma}\label{lemmaQMCombOfQMCAndQMD}
$\Prob$-a.e. we have 
\begin{equation}\label{eqQMCombOfQMCAndQMD}
   Q_{M}= Q_{M^{c}} \frac{d \operQuadraVari{M^{c}}}{d \operQuadraVari{M}}+ Q_{M^{d}} \frac{d \langle\!\langle M^d \rangle\!\rangle}{d \operQuadraVari{M}}. 
\end{equation}
\end{lemma}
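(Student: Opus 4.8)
The plan is to reduce the claimed operator identity to a scalar identity at the level of covariations and then transport it through the defining integral relation \eqref{existenceofQ} by means of Radon--Nikodym derivatives. The starting point is the decomposition $M(A)(h)=M^{c}(A)(h)+M^{d}(A)(h)$ into continuous and purely discontinuous martingale parts. Fixing $A\in\calA$ and $h_{1},h_{2}\in H$ and expanding the predictable covariation by bilinearity produces four terms, of which the two mixed ones vanish: the predictable covariation of a continuous martingale with a purely discontinuous one is identically zero, since the associated optional covariation has no continuous part and no common jumps, and the predictable one is its compensator. Hence, $\Prob$-a.e.,
$$\quadraVari{M(A)(h_{1}),M(A)(h_{2})}_{t}=\quadraVari{M^{c}(A)(h_{1}),M^{c}(A)(h_{2})}_{t}+\quadraVari{M^{d}(A)(h_{1}),M^{d}(A)(h_{2})}_{t}.$$

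Taking increments over $(s,t]$ and recalling the definitions of $\alpha_{M}$, $\alpha_{M^{c}}$ and $\alpha_{M^{d}}$, this yields $\alpha_{M}=\alpha_{M^{c}}+\alpha_{M^{d}}$ on rectangles $(s,t]\times A$, and therefore on all of $\calB([0,T])\otimes\calB(U)$ by uniqueness of the induced measures. Through the prescription $\inner{\Gamma_{M}(C)h_{1}}{h_{2}}=\alpha_{M}(C)(h_{1},h_{2})$ this transfers to the operator-valued measures as $\Gamma_{M}(C)=\Gamma_{M^{c}}(C)+\Gamma_{M^{d}}(C)$ for every $C$. Next I would insert \eqref{existenceofQ}, written for each of $M$, $M^{c}$ and $M^{d}$. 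Since the inequalities $\operQuadraVari{M^{c}}\leq\operQuadraVari{M}$ and $\langle\!\langle M^{d}\rangle\!\rangle\leq\operQuadraVari{M}$ established above make both $\operQuadraVari{M^{c}}$ and $\langle\!\langle M^{d}\rangle\!\rangle$ absolutely continuous with respect to $\operQuadraVari{M}$, the derivatives appearing in \eqref{eqQMCombOfQMCAndQMD} exist, and rewriting the $M^{c}$- and $M^{d}$-integrals against $\operQuadraVari{M}$ gives, for every $C$,
$$\int_{C}\inner{Q_{M}h_{1}}{h_{2}}_H\,\operQuadraVari{M}(dr,du)=\int_{C}\left(\inner{Q_{M^{c}}h_{1}}{h_{2}}_H\frac{d\operQuadraVari{M^{c}}}{d\operQuadraVari{M}}+\inner{Q_{M^{d}}h_{1}}{h_{2}}_H\frac{d\langle\!\langle M^{d}\rangle\!\rangle}{d\operQuadraVari{M}}\right)\operQuadraVari{M}(dr,du).$$

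Because $C\in\calB([0,T])\otimes\calB(U)$ is arbitrary, the two integrands must agree $\operQuadraVari{M}$-a.e., which gives for each fixed pair $(h_{1},h_{2})$ the scalar version of \eqref{eqQMCombOfQMCAndQMD} outside an $\operQuadraVari{M}$-null set depending on $(h_{1},h_{2})$. To pass from scalars to the operator identity I would fix a countable dense subset of $H$, discard the countable union of the corresponding null sets together with the $\Prob$-null sets inherited from the preceding steps, and invoke continuity of the bounded bilinear forms $(h_{1},h_{2})\mapsto\inner{Q\,h_{1}}{h_{2}}_H$ to extend the equality to all $h_{1},h_{2}\in H$ off a single exceptional set. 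I expect the delicate points to be the orthogonality step and, above all, this final bookkeeping of the almost-everywhere quantifiers: one must ensure that the null sets produced for each pair $(h_{1},h_{2})$ and those coming from the $\Prob$-a.e.\ relations combine into one exceptional set, which is precisely where separability of $H$ is indispensable.
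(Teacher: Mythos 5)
Your proposal is correct and follows essentially the same route as the paper's proof: decompose the covariation via orthogonality of the continuous and purely discontinuous parts, extend the resulting integral identity from sets of the form $(s,t]\times A$ to all of $\calB([0,T])\otimes\calB(U)$ (the paper cites Theorem 3.9 of \cite{CCFM:SPDE} for this), and conclude via the Radon--Nikodym derivatives using $\max\{\operQuadraVari{M^{c}},\langle\!\langle M^{d}\rangle\!\rangle\}\leq\operQuadraVari{M}$. Your extra detail on why the mixed covariations vanish and on the null-set bookkeeping via a countable dense subset of $H$ only makes explicit what the paper leaves implicit.
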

\begin{proof}
By \eqref{existenceofQ} and the orthogonality of both continuous and purely discontinuous parts we have, for $\Prob$-a.e. $\omega \in \Omega$, all $t \geq 0$, $A \in \mathcal{A}$ and $h_{1}, h_{2} \in H$, 
\begin{eqnarray*}
\int_{[0,t]\times A} \inner{Q_{M}h_{1}}{h_{2}} \, \operQuadraVari{M}(\omega)(dr,du)
& = &  \quadraVari{M(A)h_{1},M(A)h_{2}}_{t} \medskip \\
& = & \quadraVari{M^{c}(A)h_{1},M^{c}(A)h_{2}}_{t} + \quadraVari{M^{d}(A)h_{1},M^{d}(A)h_{2}}_{t} \\
& = & \int_{[0,t]\times A} \inner{Q_{M^{c}}h_{1}}{h_{2}} \, \operQuadraVari{M^{c}}(dr,du) \\
 & & + \int_{[0,t]\times A} \inner{Q_{M^{d}}h_{1}}{h_{2}} \operQuadraVari{M^d}(dr,du). 
\end{eqnarray*}
By Theorem 3.9 in \cite{CCFM:SPDE}, this identity extends to 
\begin{eqnarray}
\int_{C}\left(Q_{M} h_{1}, h_{2}\right)_{H} \operQuadraVari{M}\left(dr, du \right) 
& = &  \int_{C}\left(Q_{M^c} h_{1}, h_{2}\right)_{H} \operQuadraVari{M^c}\left(dr, du \right) \nonumber \\
& {} & + \int_{C}\left(Q_{M^d} h_{1}, h_{2}\right)_{H}  \langle\!\langle M^d \rangle\!\rangle (dr, du) \label{eqIntegQMequalQMCplusQMd} 
\end{eqnarray}
for  all $C \in \mathcal{B}(\R_{+}) \times \mathcal{B}(U)$. 
Since $\max\{\operQuadraVari{M^{c}}, \langle\!\langle M^d \rangle\!\rangle\} \leq \operQuadraVari{M}$ $\Prob$-a.e., this gives \eqref{eqQMCombOfQMCAndQMD}. 
\end{proof}

The identity \eqref{eqQMCombOfQMCAndQMD} can be used to say something about the spaces $H_{Q_{M}}$, $H_{Q_{M^{c}}}$ and $H_{Q_{M^{d}}}$. In fact, by \eqref{eqQMCombOfQMCAndQMD}, for each $h\in H$ we have $\Prob$-a.e. 
$$(Q_Mh,h)_{H}=(Q_{M^c}h,h)_{H}\frac{d \operQuadraVari{M^{c}}}{d \operQuadraVari{M}}+(Q_{M^d}h,h)_{H}\frac{d \langle\!\langle M^d \rangle\!\rangle}{d \operQuadraVari{M}}. $$
Equivalently,
\begin{equation}\label{eqSquareQMCombOfSquareQMCAndQMD}
\|Q_M^{1/2}h\|^2=\|Q_{M^c}^{1/2}h\|^2 \frac{d \operQuadraVari{M^{c}}}{d \operQuadraVari{M}}+\|Q_{M^d}^{1/2}h\|^2\frac{d \langle\!\langle M^d \rangle\!\rangle}{d \operQuadraVari{M}}\quad \Prob \text{-a.e.}    
\end{equation}
Therefore, $\Prob$-a.e. we have  
    $${\mbox Ker}\,Q_{M^c}^{1/2}\cap {\mbox Ker}\,Q_{M^d}^{1/2} \subseteq {\mbox Ker}\,Q_M^{1/2}.$$
Now, since the space $H_{Q_M}$ is isometric to $((\mbox{Ker}(Q_{M}^{1/2}))^{\perp},(\cdot, \cdot)_H)$ (the same holds for $Q_{M^c}$ and $Q_{M^d}$), we conclude that 
$$H_{Q_M} \simeq (\mbox{Ker}(Q_{M}^{1/2}))^{\perp}  \subseteq  \overline{ (\mbox{Ker}(Q_{M^{c}}^{1/2}))^{\perp} + (\mbox{Ker}(Q_{M^{d}}^{1/2}))^{\perp} } \simeq  \overline{H_{Q_{M^c}} + H_{Q_{M^d}}}.$$    

Unfortunately the information provided by \eqref{eqSquareQMCombOfSquareQMCAndQMD} is not strong enough to conclude that either $H_{Q_M} \subseteq  H_{Q_{M^c}}$ or $H_{Q_M} \subseteq H_{Q_{M^d}}$. This complicate things when trying to compare the stochastic integrals that correspond to integrands which are integrable at the same time with respect to $M$, $M^{c}$ and $M^{d}$. In particular, the problem arises since the domain of the integrands (see \ref{domainIntegrands} in Section \ref{sectStochIntegra}) for $M$ (respectively for $M^{c}$ or $M^{d}$) is only required to contain the space $H_{Q_{M}}$ (respectively the space $H_{Q_{M^c}}$ or $H_{Q_{M^d}}$); that is, we are working with possibly unbounded operators. For further developments and in order to handle domains of our integrands, we make the following assumption:

\begin{assumption}\label{assumptionDomainIntegrands}
For $\Phi \in \Lambda^{2}(M,T)$ we replace \ref{domainIntegrands} in Section \ref{sectStochIntegra}  by the assumption $\Phi(\omega,t,u) \in \mathcal{HS}(H,G)$ for every $(\omega,t,u) \in \Omega \times [0,T] \times U$, and we replace \ref{predictabilityIntegrands} by the assumption that  $(\omega,t,u) \mapsto \Phi(\omega,t,u) h$ is $\mathcal{P}_{T}\otimes \mathcal{B}(U)/\mathcal{B}(G)$-measurable.     
\end{assumption}

Our course the assumption above implies that  $\Phi(\omega,t,u)\in \mathcal{HS}(H_{Q_{M}},G)$ and  $(\omega,t,u) \mapsto \Phi(\omega,t,u) \circ Q^{1/2}_{M}(\omega,t,u)(h)$ is $\mathcal{P}_{T}\otimes \mathcal{B}(U)/\mathcal{B}(G)$-measurable, so the same applies for $M^{c}$ and $M^{d}$.  As the next result shows, integrability with respect to $M$ implies integrability with respect to $M^{c}$ and $M^{d}$.

\begin{lemma}\label{lemmaNormIntegradsMisASumMcAndMd}
For $\Phi \in \Lambda^{2}(M, T)$ we have
\begin{flalign}
& \mathbb{E} \int_{0}^{T}\!\! \int_{U}\left\|\Phi(r, u) Q_{M}^{1/2}\right\|_{\mathcal{HS}(H, G)}^{2} \operQuadraVari{M}(d r, d u) \nonumber \\ 
&= \mathbb{E} \int_{0}^{T}\!\! \int_{U}\left\|\Phi(r, u) Q_{M^{c}}^{1/2}\right\|_{\mathcal{HS}(H, G)}^{2} \operQuadraVari{M^{c}}(dr,du) \nonumber \\
& +\mathbb{E} \int_{0}^{T}\!\! \int_{U}\left\|\Phi(r, u) Q_{M^{d}}^{1/2}\right\|_{\mathcal{HS}(H, G)}^{2} \operQuadraVari{M^d}(dr,du). \label{eqNormIntegradsMisASumMcAndMd}
\end{flalign}
In particular, $\Phi \in \Lambda^{2}(M^{c}, T)$ and $\Phi \in \Lambda^{2}(M^{d}, T)$. 
\end{lemma}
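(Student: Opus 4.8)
The plan is to reduce the claimed identity to the pointwise relation \eqref{eqSquareQMCombOfSquareQMCAndQMD} by rewriting each Hilbert--Schmidt norm through the adjoint of $\Phi$. First I would fix an orthonormal basis $(g_{n}: n \in \N)$ of $G$ and, using that $Q_{M}^{1/2}$ is self-adjoint, write
$$\norm{\Phi(r,u) Q_{M}^{1/2}}_{\mathcal{HS}(H,G)}^{2} = \norm{Q_{M}^{1/2}\Phi(r,u)^{*}}_{\mathcal{HS}(G,H)}^{2} = \sum_{n=1}^{\infty} \norm{Q_{M}^{1/2}\Phi(r,u)^{*}g_{n}}_{H}^{2},$$
and likewise for $M^{c}$ and $M^{d}$. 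This converts the operator norm into a sum of terms of the form $\norm{Q_{M}^{1/2}h}_{H}^{2}$ with $h = \Phi(r,u)^{*}g_{n}$, which is exactly the quantity controlled by \eqref{eqSquareQMCombOfSquareQMCAndQMD}. Since \eqref{eqSquareQMCombOfSquareQMCAndQMD} comes from the operator identity \eqref{eqQMCombOfQMCAndQMD}, it holds for all $h$ simultaneously on a set of full measure, so no issue arises from substituting the countable family $\{\Phi(r,u)^{*}g_{n}\}$.

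Next I would apply \eqref{eqSquareQMCombOfSquareQMCAndQMD} with $h = \Phi(r,u)^{*}g_{n}$ for each $n$ and sum over $n$. Because the two Radon--Nikodym factors do not depend on $n$ and every summand is nonnegative, they can be pulled out of the series, giving the pointwise ($\operQuadraVari{M}$-a.e. in $(r,u)$, $\Prob$-a.e.) identity
$$\norm{\Phi(r,u) Q_{M}^{1/2}}_{\mathcal{HS}(H,G)}^{2} = \norm{\Phi(r,u) Q_{M^{c}}^{1/2}}_{\mathcal{HS}(H,G)}^{2}\frac{d \operQuadraVari{M^{c}}}{d \operQuadraVari{M}} + \norm{\Phi(r,u) Q_{M^{d}}^{1/2}}_{\mathcal{HS}(H,G)}^{2}\frac{d \operQuadraVari{M^{d}}}{d \operQuadraVari{M}}.$$

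Then I would integrate this identity against $\operQuadraVari{M}$ over $[0,T]\times U$ and take expectations. The point is that, because $\operQuadraVari{M^{c}} \leq \operQuadraVari{M}$ and $\operQuadraVari{M^{d}} \leq \operQuadraVari{M}$ $\Prob$-a.e. (established in Section \ref{subSecContAndDisconCMVM}), both measures are absolutely continuous with respect to $\operQuadraVari{M}$, so the Radon--Nikodym derivatives are well defined and $\int f \, \tfrac{d \operQuadraVari{M^{c}}}{d \operQuadraVari{M}}\, \operQuadraVari{M}(dr,du) = \int f \, \operQuadraVari{M^{c}}(dr,du)$ for any nonnegative measurable $f$, and similarly for $M^{d}$. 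Applying this with $f = \norm{\Phi Q_{M^{c}}^{1/2}}_{\mathcal{HS}(H,G)}^{2}$ and $f = \norm{\Phi Q_{M^{d}}^{1/2}}_{\mathcal{HS}(H,G)}^{2}$ produces exactly the two integrals on the right-hand side of \eqref{eqNormIntegradsMisASumMcAndMd}, proving the identity; the interchange of the $\Prob$-expectation and the spatial integral with the summation over $n$ is justified throughout by Tonelli's theorem, since all integrands are nonnegative.

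Finally, the ``in particular'' assertion follows at once: the left-hand side is finite since $\Phi \in \Lambda^{2}(M,T)$, and the two summands on the right are nonnegative, so each is finite; combined with the measurability and domain conditions for $M^{c}$ and $M^{d}$ already guaranteed by Assumption \ref{assumptionDomainIntegrands}, this gives $\Phi \in \Lambda^{2}(M^{c},T)$ and $\Phi \in \Lambda^{2}(M^{d},T)$. The only genuinely delicate point is the justification of the Radon--Nikodym change of measure together with the interchange of sum and integral; both are routine once the absolute continuity $\operQuadraVari{M^{c}}, \operQuadraVari{M^{d}} \ll \operQuadraVari{M}$ and the nonnegativity of all integrands are invoked, so I expect no serious obstacle beyond bookkeeping.
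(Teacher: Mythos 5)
Your proof is correct and takes essentially the same approach as the paper's: both apply the pointwise identity \eqref{eqSquareQMCombOfSquareQMCAndQMD} with $h=\Phi(r,u)^{*}g_{n}$ for an orthonormal basis $(g_{n})$ of $G$, sum over $n$ to pass to the Hilbert--Schmidt norms, and then integrate against $\operQuadraVari{M}(dr,du)$ and take expectations. The only difference is that you make explicit the Radon--Nikodym change of measure and the Tonelli justification, which the paper compresses into the phrase that the result follows by integrating both sides.
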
 
\begin{proof}
By \eqref{eqSquareQMCombOfSquareQMCAndQMD}, for any given $(\omega,r, u)$ and $g\in G$ we have, $\Prob$-a.e.
\begin{equation*}
\|Q_M^{1/2}\Phi(r, u)^{*}g\|^2=\|Q_{M^c}^{1/2}\Phi(r, u)^{*} g\|^2 \frac{d \operQuadraVari{M^{c}}}{d \operQuadraVari{M}}+\|Q_{M^d}^{1/2}\Phi(r, u)^{*}g\|^2\frac{d \langle\!\langle M^d \rangle\!\rangle}{d \operQuadraVari{M}}   
\end{equation*}
Then, by considering an orthonormal basis $(g_{n}:n\in \N)$ in $G$, replacing $g$ by $g_{n}$ in the above equality and adding over $n \in \N$, we obtain  
$$ \norm{Q_M^{1/2}\Phi(r, u)^{*}}^2_{\mathcal{HS}(G,H)}
=\norm{Q_{M^c}^{1/2} \Phi(r, u)^{*}}^2_{\mathcal{HS}(G,H)} \frac{d \operQuadraVari{M^{c}}}{d \operQuadraVari{M}}
+\norm{Q_{M^d}^{1/2}  \Phi(r, u)^{*}}^2_{\mathcal{HS}(G,H)} \frac{d \langle\!\langle M^d \rangle\!\rangle}{d \operQuadraVari{M}}.    
 $$
Equivalently, 
$$ \norm{\Phi(r, u)Q_M^{1/2}}^2_{\mathcal{HS}(H,G)}
=\norm{ \Phi(r, u) Q_{M^c}^{1/2}}^2_{\mathcal{HS}(H,G)} \frac{d \operQuadraVari{M^{c}}}{d \operQuadraVari{M}}
+\norm{\Phi(r, u) Q_{M^d}^{1/2}  }^2_{\mathcal{HS}(G,H)} \frac{d \langle\!\langle M^d \rangle\!\rangle}{d \operQuadraVari{M}}.    
 $$
The result follows by integrating both sides with respect to $\operQuadraVari{M}(dr,du)$.
\end{proof}

The following result explains the connection between the stochastic integral with respect to $M$ and the corresponding integrals with respect to $M^{c}$ and $M^{d}$. 

\begin{theorem}
Let $\Phi \in \Lambda^{2, \textup loc}(T, M)$. Then $\Phi  \in \Lambda^{2, \textup loc}(T, M^{c}) \cap \Lambda^{2, \textup loc}(T, M^{d})$ and for every $0 \leq t \leq T$
\begin{equation}\label{eqDecompStochIntegral}
I_t:=\int_0^t \!\! \int_U \Phi(s, u)\, M(ds, du)= \int_0^t \!\! \int_U \Phi(s, u) \, M^{c}(ds, du)+\int_0^t \!\! \int_U \Phi(s, u) \, M^{d}(ds, du).     
\end{equation}
Besides
\begin{equation}
\label{continuousintegralpart}
I_t^c = \int_0^t \!\! \int_U \Phi(s, u)\, M^c(ds, du), \qquad
I_t^d = \int_0^t \!\! \int_U \Phi(s, u)\, M^d(ds, du). 
\end{equation}
\end{theorem}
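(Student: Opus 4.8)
The plan is to prove the three assertions in order: first the local integrability of $\Phi$ with respect to $M^{c}$ and $M^{d}$, then the additive decomposition \eqref{eqDecompStochIntegral}, and finally the identification of the two summands with the continuous and purely discontinuous martingale parts of $I$.

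For the local integrability, I would observe that the computation carried out in the proof of Lemma \ref{lemmaNormIntegradsMisASumMcAndMd} is in fact $\omega$-wise before the expectation is taken: integrating the $\Prob$-a.e. density identity for $\norm{\Phi Q_M^{1/2}}^2$ against $\operQuadraVari{M}(dr,du)$ yields, $\Prob$-a.e.,
\begin{align*}
\int_0^T\!\!\int_U \norm{\Phi Q_M^{1/2}}_{\mathcal{HS}(H,G)}^2 \operQuadraVari{M}(dr,du)
&= \int_0^T\!\!\int_U \norm{\Phi Q_{M^c}^{1/2}}_{\mathcal{HS}(H,G)}^2 \operQuadraVari{M^c}(dr,du) \\
&\quad + \int_0^T\!\!\int_U \norm{\Phi Q_{M^d}^{1/2}}_{\mathcal{HS}(H,G)}^2 \langle\!\langle M^d \rangle\!\rangle(dr,du).
\end{align*}
Since $\Phi \in \Lambda^{2}_{\textup{loc}}(M,T)$ means that the left-hand side is finite $\Prob$-a.e. by \eqref{LocallyIntegrable}, and both terms on the right are nonnegative, each is finite $\Prob$-a.e., which is exactly $\Phi \in \Lambda^{2}_{\textup{loc}}(M^{c},T) \cap \Lambda^{2}_{\textup{loc}}(M^{d},T)$.

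For the decomposition \eqref{eqDecompStochIntegral}, I would argue by the usual simple–dense–localize scheme. For a simple $\Phi \in \mathcal{S}(M,T)$ as in \eqref{eqSimpleIntegrands}, the relation $M(t,A)(h)=M^{c}(t,A)(h)+M^{d}(t,A)(h)$ and the defining identity for the martingales $Y_{i,j}$ in \eqref{NewDefIntSimpleIntegrand} give $Y_{i,j}=Y_{i,j}^{c}+Y_{i,j}^{d}$ termwise, hence $I^{M}(\Phi)=I^{M^{c}}(\Phi)+I^{M^{d}}(\Phi)$. To pass to general $\Phi \in \Lambda^{2}(M,T)$, choose simple $\Phi^{n}\to\Phi$ in $\Lambda^{2}(M,T)$; applying Lemma \ref{lemmaNormIntegradsMisASumMcAndMd} to $\Phi^{n}-\Phi$ shows $\Phi^{n}\to\Phi$ also in $\Lambda^{2}(M^{c},T)$ and $\Lambda^{2}(M^{d},T)$, so the continuity of the three integral maps lets me take limits in the identity above. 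The general $\Lambda^{2}_{\textup{loc}}(M,T)$ case then follows from the localization procedure of Section \ref{sectStochIntegra}, using that the $M$-localizing stopping times also localize $M^{c}$ and $M^{d}$ because $\operQuadraVari{M^{c}},\langle\!\langle M^d\rangle\!\rangle\leq\operQuadraVari{M}$.

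For the identification \eqref{continuousintegralpart}, the integral $\int \Phi\,dM^{c}$ is continuous by Proposition 6.14 in \cite{CCFM:SPDE}, since $M^{c}(\cdot,A)(h)$ is continuous for every $A\in\calA$ and $h\in H$. The remaining and hardest step is to show that $N:=\int \Phi\,dM^{d}$ is purely discontinuous: I would reduce to scalars, using Proposition 6.15 in \cite{CCFM:SPDE} to write $\inner{N_{t}}{g}_G$ as the real-valued stochastic integral of $\Phi_{g}$ with respect to $M^{d}$; for simple integrands this is a finite linear combination of the purely discontinuous martingales $M^{d}(\cdot,A)(h)$ and hence purely discontinuous, and since the purely discontinuous martingales form a closed subspace of $\mathcal{M}_{T}^{2}$, the property survives the $L^{2}$-limit. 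Thus $\inner{N}{g}_G$ is purely discontinuous for every $g\in G$, so $N$ is a purely discontinuous $G$-valued martingale, and the uniqueness of the decomposition of $I$ into its continuous and purely discontinuous parts forces $I^{c}=\int\Phi\,dM^{c}$ and $I^{d}=\int\Phi\,dM^{d}$. The main obstacle is precisely this last identification, which hinges on the scalar-projection characterization of purely discontinuous Hilbert-space-valued martingales together with the closedness of that subspace; once that is in place, everything else is routine localization and continuity.
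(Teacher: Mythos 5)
Your proposal is correct and follows essentially the same route as the paper: decompose for simple integrands via the scalar projections $\inner{\cdot}{g}_G$, transfer integrability and convergence from $\Lambda^{2}(M,T)$ to $\Lambda^{2}(M^{c},T)$ and $\Lambda^{2}(M^{d},T)$ through the exact identity of Lemma \ref{lemmaNormIntegradsMisASumMcAndMd}, and identify the two summands using the continuous versus purely discontinuous dichotomy for square integrable martingales. If anything, you make explicit the two points the paper compresses into ``a density argument completes the proof'' --- namely the closedness of the purely discontinuous subspace of $\mathcal{M}^{2}_{T}$ under $L^{2}$-limits and the localization step for $\Lambda^{2,\textup{loc}}$ (where, as you implicitly use, the correct justification that the $M$-localizing times work for $M^{c}$ and $M^{d}$ is the $\omega$-wise version of the Lemma's identity rather than the bare domination $\operQuadraVari{M^{c}}, \langle\!\langle M^{d}\rangle\!\rangle \leq \operQuadraVari{M}$, since the operators $Q_{M^{c}}$, $Q_{M^{d}}$ differ from $Q_{M}$) --- and both justifications are exactly right.
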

\begin{proof}
The integrability of $\Phi$ with respect to $M^{c}$ and $M^{d}$ is guaranteed by Lemma \ref{lemmaNormIntegradsMisASumMcAndMd}. 

Consider a simple function $\Phi$ of the form
\begin{equation}\label{eqSimpleFormIntegrand}
\Phi(\omega, r ,u) = \mathbf{1}_{]s_1, s_2]}(r) \mathbf{1}_{F_{s_1}} (\omega) \mathbf{1}_A(u)\, S,
\end{equation}
where $S \in \mathcal{HS} (H,G)$. For $g \in G$, we have
\begin{align}
& \inner{I_t}{g}_{G} = \quad \mathbf{1}_{F_{s_1}} (\omega) M \Bigl((s_1 \wedge t, s_2 \wedge t], A\Bigr) (S^{*}g)\nonumber\\
&=\mathbf{1}_{F_{s_1}} (\omega) M^c \Bigl((s_1 \wedge t, s_2 \wedge t], A\Bigr) ( S^{*}g)+\mathbf{1}_{F_{s_1}} (\omega) M^d \Bigl((s_1 \wedge t, s_2 \wedge t], A\Bigr) (S^{*}g)\nonumber
\end{align}
and hence
\begin{equation}
\inner{I_t}{g}_{G} =\inner{\int_0^t \!\! \int_U \Phi(s, u)\,  M^c(ds, du)}{g}_{G} +\inner{\int_0^t \!\! \int_U \Phi(s, u) \, M^d(ds, du)}{g }_{G}. \label{eqDecompStochIntegralSimpleProcesses}
\end{equation}


On the other hand, by the theory of Hilbert space-valued martingales, 
\begin{equation}
\label{eqOptionalQuadraticVariationIntegralduality2}
\inner{I_t}{g}=\inner{I^c_t}{g}+\inner{I^d_t}{g}.
\end{equation}
Hence, if we use \eqref{eqDecompStochIntegralSimpleProcesses} and (\ref{eqOptionalQuadraticVariationIntegralduality2}), the equality
$$
\inner{I^c_t}{g}_G-\inner{\int_0^t \!\! \int_U \Phi(s, u)\, M^c(ds, du)}{g}_{G} =\inner{\int_0^t \!\! \int_U \Phi(s, u)\, M^d(ds, du)}{g}_{G} -\inner{I^d_t}{g}_G
$$
follows easily. But, this is an equality between a continuous square integrable martingale and a purely discontinuous square integrable martingale. Hence, both sides must be equal to $0$. The theorem follows in the case that $\Phi$ is the form of (\ref{eqSimpleFormIntegrand}), by a direct application of the Hahn-Banach theorem. A density argument completes the proof (see Theorem 6.8 in \cite{CCFM:SPDE}).
\end{proof}

\subsection{Optional quadratic variation of the stochastic integral}\label{subSecOptioQuaVariStochInteg}

As it is well known in other contexts, the optional quadratic variation plays a fundamental role in the statement and proof of It\^{o} formulas. In this section we study the optional quadratic variation of the stochastic integral. In particular, we show that it can be decomposed according to the continuous and discontinuous part of a cylindrical martingale-valued measure. 

First, we need the following terminology on the trace. For $\zeta \in \goth{Bil}(G,G;K)$ and $S,R \in \mathcal{HS}(H,G)$ we use $\mbox{Tr}_{S,R}(\zeta)$  (following \cite{BrzezniakVanNeervenVeraarWeis:2008}) to denote the \emph{trace} defined by
$$ \mbox{Tr}_{S,R}(\zeta)=\sum_{j=1}^{\infty} \zeta(S h_{j},R h_{j}), $$

where $(h_{j})_{j \in \N}$ is an orthonormal basis in $H$. Being a trace, the above quantity is independent of the orthonormal basis used in its calculation. If $S=R$ we write $\mbox{Tr}_{S}(\zeta)$. Moreover, 
$$\norm{\mbox{Tr}_{S,R}(\zeta)}_{K} \leq \norm{\zeta}_{\goth{Bil}(G,G;K)} \norm{S}_{\mathcal{HS}(H,G)} \norm{R}_{\mathcal{HS}(H,G)}.$$

The main result of this section is the following.

\begin{proposition} \label{propRiemannQuadraticVariation} 
Let $\Phi \in \Lambda^{2, \textup loc}(T, M)$ and  consider the stochastic integral process
$$ I_t = \int_0^t \!\! \int_U \Phi(s, u)\, M(ds, du).$$ 
\begin{enumerate}
\item The optional quadratic variation of $I$ is given by 
\begin{equation}\label{eqOptionalQuadraticVariationIntegral}
\left[ I \right]_t = \int_{0}^{t} \!\!\int_U  \norm{ \Phi(r,u)Q_{M^{c}}^{1/2}(r,u)}_{\mathcal{HS}(H,G)}^{2} \operQuadraVari{M^{c}}(dr,du) + \sum_{0<s \leq t} \norm{\Delta I_{s}}_{G}^{2}.  
\end{equation}
\item The optional operator quadratic variation of $I$ is given by 
\begin{equation}\label{eqOperatorOptionalQuadraticVariationIntegral}
\left[\!\left[ I \right]\!\right]_t   =  \int_{0}^{t} \!\!\int_U  \Phi(r,u) Q_{M^{c}}(r,u) \Phi(r,u)^{*}  \operQuadraVari{M^{c}}(dr,du) + \sum_{0<s \leq t} \, \Delta I_{s} \otimes \Delta I_{s}. 
\end{equation}
\item \label{RiemannRepreOperaQuadrVaria} For each $n \in \N$, let $(\tau^{n}_{k}: k \in \N)$ be a sequence of stopping times satisfying
\begin{enumerate}
    \item \label{prop1RandomParti} $0=\tau^{n}_{0} < \tau^{n}_{1} < \cdots < \tau^{n}_{k} < \cdots$
    \item \label{prop2RandomParti} $ \forall n \in \N$,  $\tau^{n}_{k} \nearrow \infty$ as $k \rightarrow \infty $ $\Prob$-a.e.
    \item \label{prop3RandomParti} $\sup_{k \in \N} (\tau^{n}_{k+1} - \tau^{n}_k ) \rightarrow 0$ as $n \rightarrow \infty$ $\Prob$-a.e.
\end{enumerate}\medskip

If $F$ is a $\goth{Bil}(G,G)$-valued adapted c\`{a}gl\`{a}d process,  then for any given $ t \geq 0$ we have 
\begin{flalign*} 
&\sum_{k=0}^{\infty} F(\tau_k^n \wedge t) \left(I_{\tau^{n}_{k+1} \wedge t}-I_{\tau^{n}_{k} \wedge t}, I_{\tau^{n}_{k+1} \wedge t}-I_{\tau^{n}_{k} \wedge t} \right) \\
& \mathop{\longrightarrow}^{\Prob}_{n \to \infty} \int_{0}^{t} F(s) \, d \left[\!\left[ X \right]\!\right]_s  =\int_{0}^{t} \!\!\int_U \mbox{Tr}_{\Phi(s, u) Q_{M^{c}}^{1 / 2}} F(s)   \operQuadraVari{M^{c}}(ds,du) +\sum_{0<s \leq t} F ( s )\left(\Delta I_{s},\Delta I_s\right). 
\end{flalign*}
\end{enumerate}
\end{proposition}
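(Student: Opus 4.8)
The plan is to obtain (i) and (ii) directly from the decomposition $I=I^{c}+I^{d}$ established above, and then to deduce (iii) from an unweighted Riemann convergence combined with the explicit form of $\left[\!\left[ I \right]\!\right]$ supplied by (ii). By \eqref{continuousintegralpart}, the process $I^{c}_{t}=\int_{0}^{t}\!\!\int_{U}\Phi\, M^{c}(ds,du)$ is the continuous martingale part of $I$ and $I^{d}_{t}=\int_{0}^{t}\!\!\int_{U}\Phi\, M^{d}(ds,du)$ its purely discontinuous part; in particular $I^{c}$ has continuous paths and $\Delta I_{s}=\Delta I^{d}_{s}$ for every $s$. The structural identities recalled in Section \ref{sectPreliminaries}, namely $\left[ I \right]_{t}=\quadraVari{I^{c}}_{t}+\sum_{0<s\leq t}\norm{\Delta I_{s}}_{G}^{2}$ and $\left[\!\left[ I \right]\!\right]_{t}=\operQuadraVari{I^{c}}_{t}+\sum_{0<s\leq t}\Delta I_{s}\otimes\Delta I_{s}$, reduce the continuous contributions to $\quadraVari{I^{c}}$ and $\operQuadraVari{I^{c}}$; applying Theorem \ref{theoQuadraVariaStochIntegral} to the integral with respect to $M^{c}$ evaluates these as $\int_{0}^{t}\!\!\int_{U}\norm{\Phi Q_{M^{c}}^{1/2}}_{\mathcal{HS}(H,G)}^{2}\operQuadraVari{M^{c}}(dr,du)$ and $\int_{0}^{t}\!\!\int_{U}\Phi Q_{M^{c}}\Phi^{*}\operQuadraVari{M^{c}}(dr,du)$, which are precisely the continuous terms in \eqref{eqOptionalQuadraticVariationIntegral} and \eqref{eqOperatorOptionalQuadraticVariationIntegral}. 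Since the statement is phrased for $\Phi\in\Lambda^{2}_{\textup{loc}}(M,T)$, I would first argue for $\Phi\in\Lambda^{2}(M,T)$ and then localize with the stopping times $\tau_{n}$ from Section \ref{sectStochIntegra}, noting that both sides stabilize on $[0,\tau_{n}]$.

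For (iii), the explicit form of the limit follows from (ii) by pairing $F(s)$ with $d\left[\!\left[ I \right]\!\right]_{s}$: writing $L=\Phi Q_{M^{c}}^{1/2}\in\mathcal{HS}(H,G)$ so that $\Phi Q_{M^{c}}\Phi^{*}=LL^{*}=\sum_{j}(Le_{j})\otimes(Le_{j})$ for an orthonormal basis $(e_{j})$ of $H$, the pairing of the bilinear form $F(s)$ with $LL^{*}$ equals $\sum_{j}F(s)(Le_{j},Le_{j})=\mbox{Tr}_{L}F(s)$, while its pairing with the jump $\Delta I_{s}\otimes\Delta I_{s}$ equals $F(s)(\Delta I_{s},\Delta I_{s})$. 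Hence the second displayed equality is automatic once the first is known, and the whole content of (iii) is the convergence in probability of the weighted Riemann sums to $\int_{0}^{t}F(s)\,d\left[\!\left[ I \right]\!\right]_{s}$.

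To prove that convergence I would proceed in three stages. First, establish the unweighted statement for random partitions, $\sum_{k}\bigl(I_{\tau^{n}_{k+1}\wedge t}-I_{\tau^{n}_{k}\wedge t}\bigr)\otimes\bigl(I_{\tau^{n}_{k+1}\wedge t}-I_{\tau^{n}_{k}\wedge t}\bigr)\to\left[\!\left[ I \right]\!\right]_{t}$ in probability, extending the deterministic-partition convergence of Section \ref{sectPreliminaries} to sequences satisfying \ref{prop1RandomParti}--\ref{prop3RandomParti} by treating $I$ as a $G$-valued square integrable local martingale and invoking optional sampling. Second, handle a simple weight $F(s)=B\,\caract_{(\sigma_{1},\sigma_{2}]}(s)$ with $B$ an $\mathcal{F}_{\sigma_{1}}$-measurable bounded bilinear form, by restricting the sum to indices with $\tau^{n}_{k}\in(\sigma_{1},\sigma_{2}]$; the boundary intervals straddling $\sigma_{1}$ and $\sigma_{2}$ become negligible as the mesh vanishes, and by left-continuity of $F$ an interval shrinking onto a jump time $s$ carries the weight $F(\tau^{n}_{k}\wedge t)\to F(s)$, so each jump is weighted exactly by $F(s)$. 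Third, pass to a general c\`{a}gl\`{a}d adapted $F$ by approximation with simple processes, uniformly on compacts and in probability, controlling the error through the trace-norm total variation of $\left[\!\left[ I \right]\!\right]$ together with the (now convergent, hence tight) sums of the first stage.

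The main obstacle is carrying this convergence through with random partitions inside the projective tensor product $G\,\widehat{\otimes}_{1}\,G$, and in particular achieving a clean separation of the three contributions. Decomposing each increment as $\Delta_{k}I=\Delta_{k}I^{c}+\Delta_{k}I^{d}$ and expanding the tensor square produces a continuous term $(\Delta_{k}I^{c})\otimes(\Delta_{k}I^{c})$, a discontinuous term $(\Delta_{k}I^{d})\otimes(\Delta_{k}I^{d})$, and two cross terms; the orthogonality of $I^{c}$ and $I^{d}$ should force the cross terms to vanish in the limit, while the first produces the continuous integral and the second the jump sum. Making each of these limits rigorous under random partitions, and uniformly enough to survive multiplication by the c\`{a}gl\`{a}d weight $F$, is the delicate part of the argument.
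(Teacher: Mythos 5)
Your treatment of (i) and (ii) coincides with the paper's: both reduce via the classical identities $\left[ I \right]_t = \quadraVari{I}^{c}_t + \sum_{0<s\leq t}\norm{\Delta I_s}_G^2$ and $\left[\!\left[ I \right]\!\right]_t = \operQuadraVari{I}^{c}_t + \sum_{0<s\leq t}\Delta I_s\otimes\Delta I_s$ (M\'{e}tivier, Theorems 20.5 and 22.8, recalled in the preliminaries), and then evaluate $\quadraVari{I}^{c}$ and $\operQuadraVari{I}^{c}$ by applying Theorem \ref{theoQuadraVariaStochIntegral} to $I^c=\int\Phi\,dM^c$ via \eqref{continuousintegralpart}; your explicit localization remark for $\Lambda^{2,\textup{loc}}$ addresses a point the paper leaves implicit. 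Your identification of the limit in (iii) by trace duality --- writing $\Phi Q_{M^c}\Phi^* = LL^*$ with $L=\Phi Q_{M^c}^{1/2}$ and pairing $F(s)$ against $LL^*$ and against $\Delta I_s\otimes\Delta I_s$ --- is also exactly the paper's computation.

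The divergence is in how the convergence of the weighted Riemann sums is obtained. The paper disposes of it in one line: since $I$ is a genuine $G$-valued (locally) square integrable martingale, Corollary 26.12 in M\'{e}tivier applies verbatim to random partitions satisfying \ref{prop1RandomParti}--\ref{prop3RandomParti} and c\`{a}gl\`{a}d $\goth{Bil}(G,G)$-valued weights, yielding convergence in probability to $\int_0^t F(s)\,d\left[\!\left[ I \right]\!\right]_s$; all that remains is the trace-duality rewriting. You instead sketch a three-stage re-derivation (unweighted random-partition convergence, simple weights, ucp approximation by simple processes). That outline is essentially the standard proof of the cited corollary, so nothing is wrong in principle, but be aware that what you have written is a plan rather than a proof precisely at the step you yourself flag as delicate: ``invoking optional sampling'' does not by itself upgrade the deterministic-partition convergence recalled in Section \ref{sectPreliminaries} to random partitions --- the usual route goes through the algebraic identity expressing $\sum_k (\Delta_k I)^{\otimes 2}$ as $I_t^{\otimes 2}$ minus twice a left-endpoint Riemann sum, plus ucp convergence of elementary stochastic integrals, i.e.\ you would be rebuilding M\'{e}tivier's Section 26 from scratch. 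Finally, your closing worry about cross terms between $\Delta_k I^c$ and $\Delta_k I^d$ is a red herring: no separation of the Riemann sums into continuous and discontinuous contributions is needed, since the semimartingale result applies to $I$ as a whole, and the separated form of the limit comes solely from the closed-form identity for $\left[\!\left[ I \right]\!\right]$ that you already established in (ii).
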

\begin{proof}
We begin by proving  \eqref{eqOptionalQuadraticVariationIntegral} and \eqref{eqOperatorOptionalQuadraticVariationIntegral}. In fact, since $I$ is an $H$-valued square integrable martingale, by Theorems 20.5 and 22.8 in \cite{Metivier} we have
$$ \left[ I \right]_{t} =\quadraVari{ I }^{c}_t + \sum_{0<s \leq t} \norm{\Delta I_{s}}_{G}^{2}$$
and 
$$ \left[\!\left[ I \right]\!\right]_t   = \operQuadraVari{ I }_{t}^{c}  + \sum_{0<s \leq t} \, \Delta I_{s} \otimes \Delta I_{s}.  $$
This way, it suffices to show that 
$$ 
\quadraVari{ I }^{c}_t = \int_{0}^{t} \!\!\int_U  \norm{ \Phi(r,u)Q_{M^{c}}^{1/2}(r,u)}_{\mathcal{HS}(H,G)}^{2} \operQuadraVari{M^{c}}(dr,du)
$$
and
$$ \operQuadraVari{ I }_{t}^{c}   =  \int_{0}^{t} \!\!\int_U  \Phi(r,u) Q_{M^{c}}(r,u) \Phi(r,u)^{*}  \operQuadraVari{M^{c}}(dr,du). $$

These identities are direct consequences of Theorem \ref{theoQuadraVariaStochIntegral} and \eqref{continuousintegralpart}.

To prove \ref{RiemannRepreOperaQuadrVaria}, observe that by Corollary 26.12 in \cite{Metivier} we have for any given $ t \geq 0$ that
$$ \sum_{k=0}^{\infty} F(\tau_k^n \wedge t) \left(I_{\tau^{n}_{k+1} \wedge t}-I_{\tau^{n}_{k} \wedge t}, I_{\tau^{n}_{k+1} \wedge t}-I_{\tau^{n}_{k} \wedge t} \right) \mathop{\longrightarrow}^{\Prob}_{n \to \infty} \int_{0}^{t} F(s) \, d \left[\!\left[ X \right]\!\right]_s.$$
Now, by \eqref{eqOperatorQuadraticVariationIntegral} and \eqref{eqOperatorOptionalQuadraticVariationIntegral}we have 
\begin{eqnarray*}
\int_{0}^{t} F(s) \, d \left[\!\left[ X \right]\!\right]_s 
& = & \int_{0}^{t} \!\!\int_U \left\langle F(s),\Phi(r,u) Q_{M^{c}}(r,u) \Phi(r,u)^{*} \right\rangle  \operQuadraVari{M^{c}}(dr,du) \\
& + & \sum_{0<s \leq t} \, F(s)(\Delta I_{s},\Delta I_{s}),
\end{eqnarray*}
where in the integral on the right-hand side is interpreted as the trace duality between $F(s) \in \goth{Bil}(G,G) \simeq \mathcal{L}(G,G)$ and $\Phi(r,u) Q_{M^{c}}(r,u) \Phi(r,u)^{*} \in \mathcal{N}(G,G)$. Here $\mathcal{N}(G,G)$ stands for the space of the nuclear or trace class operators on $G$ (see Theorem 7.10.40 in \cite{BogachevSmolyanov}, p.335). Moreover, by the definition of the trace duality if $(h_{n}:n \in \N)$ is an orthonormal basis in $H$ we have
\begin{eqnarray*}
\left\langle F(s),\Phi(r,u) Q_{M^{c}}(r,u) \Phi(r,u)^{*} \right\rangle 
& = &  \sum_{n=1}^{\infty} F(s) \left(\Phi(s, u) Q_{M^{c}}^{1 / 2} h_{n}, \Phi(s, u) Q_{M^{c}}^{1 / 2} h_{n} \right) \\
& = & \mbox{Tr}_{\Phi(s, u) Q_{M^{c}}^{1 / 2}} F(s). 
\end{eqnarray*}
Therefore
$$
\int_{0}^{t} \!\!\int_U \left\langle F(s),\Phi Q_{M^{c}} \Phi^{*} \right\rangle  \operQuadraVari{M^{c}}(dr,du) 
= \int_{0}^{t} \!\!\int_U \mbox{Tr}_{\Phi Q_{M^{c}}^{1/2}} F(s)   \operQuadraVari{M^{c}}(ds,du),
$$
which proves \ref{RiemannRepreOperaQuadrVaria}. 
\end{proof}

\section{It\^{o} formula}\label{sectItoFormula}

In this section we prove an It\^{o} formula for It\^{o} processes defined with respect to a cylindrical martingale-valued measure. For convenience, we first prove a real-valued version of the formula and then we state and prove the corresponding Hilbert space-valued version.  We start with some definitions. 

\begin{definition}\label{defiItoProcess}
Let $\xi$ be a $\mathcal{F}_{0}$-measurable $G$-valued random variable. Let $\beta:\Omega \times \R_{+} \rightarrow \R$ be adapted, with c\`adl\`ag paths and locally of finite variation. Let  $\psi: \Omega \times \R_{+} \rightarrow G$ be strongly progressively  measurable with paths in $L^1_{\textup loc} (\mathbb{R}_+, \beta ; G)$ $\Prob$-a.e. and $\Phi \in \Lambda^{2, \textup loc}(T, M)$. The $G$-valued adapted process 
\begin{equation}\label{eqDefiItoProcess}
 X_t : = \xi + \int_0^t \psi(s) d\beta(s) + \int_0^t \!\! \int_U \Phi(s, u)\, M(ds, du) 
\end{equation}
is called a (generalized) \emph{$G$-valued It\^{o} process}. 
\end{definition}

Consider $g\in G$ , $g^* \in G^{*}$ and $F \in \mathcal{L}(H,G)$.  Borrowing notation from \cite{Lu_Zhang} we consider the mappings $\operQuadraVari{F,g}_{G}$ and $\operQuadraVari{g^{*},F}_{G}:H \rightarrow \R$ given by
$$\operQuadraVari{F,g}_{G}(h)= \inner{F(h)}{g}_{G}, \quad  \operQuadraVari{g^{*},F}_{G}(h)= \left\langle g^{*}, F(h) \right\rangle_{G^{*},G}, \quad \forall h \in H.$$

The following is the real-valued version of our It\^{o} formula.

\begin{theorem}[It\^{o} formula, scalar case]\label{theoItoFormulaScalarCase}
Let $\xi$, $\beta$, $\psi$, $\Phi$ and $X$ be as in Definition \ref{defiItoProcess}. Let $f:[0, \infty) \times G \rightarrow \R$ be a function of class $C^{1,2}$. We assume $f_{xx}$ is uniformly continuous on bounded subsets of $[0,\infty)\times G$. Then, almost surely we have for every $t\geq 0$
$$
\begin{aligned}
& f(t, X_t) = f(0, \xi)+ \int_{0}^{t} f_{t}(s, X_{s-}) d s + \int_{0}^{t}\!\!\int_U \left\langle \! \left\langle f_{x}(s, X_{s-}), \Phi(s, u)\right\rangle \! \right\rangle_{G} M(ds, du) \\
& + \int_{0}^{t}\left\langle f_{x}(s, X_{s-}), \psi(s)\right\rangle_{G^{*}, G} d \beta(s)  +\frac{1}{2} \int_{0}^{t} \!\!\int_U \textup{Tr}_{\Phi(s, u) Q_{M^{c}}^{1 / 2}} \left( f_{x x}(s, X_{s-}) \right) \operQuadraVari{M^{c}}(ds,du) \\
& +\sum_{0<s \leq t}\left[f\left(s,X_s\right)-f\left(s,X_{s-}\right)-f_x\left(s,X_{s-}\right)\left(\Delta X_{s} \right)\right]
\end{aligned}
$$
\end{theorem}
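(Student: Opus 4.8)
The plan is to prove the scalar It\^o formula by a standard three-stage strategy: first reduce to a localized, bounded setting; second establish the formula along a sequence of random partitions via a second-order Taylor expansion; and third identify the limit of each resulting sum with the corresponding term in the statement. Since $A$, $\psi$ and $\Phi$ are only locally integrable and $X$ may have unbounded paths, I would begin by localizing. Introduce stopping times $\sigma_N$ (for instance the first time $\norm{X_t}_G$, the total variation of $A$, or the local integrability functional for $\Phi$ exceeds $N$) so that on $[0,\sigma_N]$ everything is bounded and $\Phi\in\Lambda^2(M,T)$ rather than merely in $\Lambda^{2,\mathrm{loc}}$. Because $f_{xx}$ is uniformly continuous on bounded sets, on each such interval $f$, $f_t$, $f_x$, $f_{xx}$ and the relevant moduli of continuity are controlled. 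Proving the formula up to each $\sigma_N$ and then letting $N\to\infty$ (using $\sigma_N\nearrow\infty$ a.s.) will give the general case, so I would state this reduction once and work under the boundedness assumptions thereafter.

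Next I would fix a sequence of random partitions $(\tau^n_k)$ as in Proposition \ref{propRiemannQuadraticVariation}\ref{RiemannRepreOperaQuadrVaria}, refining them if necessary so that they also exhaust the (at most countably many) large jump times of $X$, and write the telescoping identity
\begin{equation*}
f(t,X_t)-f(0,\xi)=\sum_k \left[ f(\tau^n_{k+1}\wedge t, X_{\tau^n_{k+1}\wedge t}) - f(\tau^n_k\wedge t, X_{\tau^n_k\wedge t}) \right].
\end{equation*}
Each increment is expanded by Taylor's theorem, separating the time increment from the space increment: the time part contributes $\int_0^t f_t(s,X_{s-})\,ds$ in the limit, the first-order space term $f_x(s,X_{s-})(\Delta^n_k X)$ splits into the $dA$ integral plus the stochastic integral against $M$ (identified using the associativity result Theorem \ref{theoAssociatiStochIntegral} and the decomposition \eqref{eqDecompStochIntegral}), and the second-order term $\tfrac12 f_{xx}(s,X_{s-})(\Delta^n_k X,\Delta^n_k X)$ is exactly the quantity handled by the Riemann representation of the operator quadratic variation. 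Applying Proposition \ref{propRiemannQuadraticVariation}\ref{RiemannRepreOperaQuadrVaria} with $F(s)=\tfrac12 f_{xx}(s,X_{s-})$ (a $\goth{Bil}(G,G)$-valued adapted c\`agl\`ad process) converts this sum into the $\mathrm{Tr}_{\Phi Q_{M^c}^{1/2}}(f_{xx})\,\operQuadraVari{M^c}$ integral plus the jump sum $\sum_{s\le t}\tfrac12 f_{xx}(s,X_{s-})(\Delta X_s,\Delta X_s)$.

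The remaining work is to show that the Taylor remainders vanish in probability and to reconcile the second-order jump contribution $\tfrac12 f_{xx}(s,X_{s-})(\Delta X_s,\Delta X_s)$ produced above with the actual jump term
\begin{equation*}
f(s,X_s)-f(s,X_{s-})-f_x(s,X_{s-})(\Delta X_s)
\end{equation*}
appearing in the statement. These two jump expressions differ exactly by the second-order Taylor residual \emph{at each jump}; the idea is that summing the pointwise Taylor expansion at every jump reassembles the true increment $f(s,X_s)-f(s,X_{s-})$, so that the crude quadratic jump term is replaced by the honest functional difference, while the residual from the \emph{continuous} (small-jump) part goes to zero. I expect this jump analysis to be the main obstacle: one must bound $\sum_s |\Delta X_s|^2<\infty$ (which follows from $[X]_t<\infty$ and the optional quadratic variation computation in Proposition \ref{propRiemannQuadraticVariation}), control the small jumps uniformly using the uniform continuity of $f_{xx}$, and ensure that the sum over large jumps matches a finite number of terms handled exactly. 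The interplay here between the continuous part $\operQuadraVari{M^c}$ driving the trace integral and the purely discontinuous part generating the jumps — made possible precisely by the decomposition $[I]_t = \langle I\rangle^c_t + \sum_{s\le t}\norm{\Delta I_s}^2$ from Proposition \ref{propRiemannQuadraticVariation} — is what forces the clean separation of the formula into its continuous-quadratic and discontinuous-functional pieces.
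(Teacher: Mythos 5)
Your proposal is correct in substance and follows the same architecture as the paper's proof: localize to a bounded setting, telescope $f(t,X_t)-f(0,\xi)$ along random partitions of stopping times, expand to second order, identify the first-order sum via the Riemann representation (Lemma \ref{lemmaRiemannRepresentation}, which rests on the associativity Theorem \ref{theoAssociatiStochIntegral}) and the second-order sum via Proposition \ref{propRiemannQuadraticVariation}\ref{RiemannRepreOperaQuadrVaria}, then dispose of the Taylor residuals by a small-jump/large-jump split using the modulus $\Gamma(T,K,\delta)$ of Lemma \ref{lemmaRemainderTaylorFormula} and the square-summability of jumps coming from $[X]_t<\infty$. The one genuine divergence is your choice of partitions, and it is worth comparing. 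The paper does not take generic partitions refined by jump times; it builds $\tau^n_{k+1}$ as threshold crossings ($Z^n_k\geq 2^{-n}$) of the increments of the stochastic integral, the $dA$ integral and the quadratic variation, so that by construction the oscillation of $X$ strictly inside each $(\tau^n_k,\tau^n_{k+1})$ is at most $4\cdot 2^{-n}$; this yields the key implication \eqref{eqSizeOfJumpsItoFormula} that every jump of size exceeding $4\cdot 2^{-n}$ occurs exactly at a partition point, and it makes the bound $\norm{X_{\tau^n_{k+1}\wedge t}-X_{\tau^n_k\wedge t}}_G\leq 6\cdot 2^{-n}$ on small-jump intervals immediate, which is precisely what drives $D^1_n\to 0$. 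In your scheme — generic mesh-to-zero partitions with the (a.s. finitely many, and bona fide stopping) times of jumps above a threshold $\epsilon_n$ adjoined — large jumps do sit at partition points, but the uniform smallness of the increments over the remaining intervals is \emph{not} automatic from mesh $\to 0$: you would need an additional pathwise c\`adl\`ag-modulus argument (oscillation over shrinking intervals containing no jump above the threshold tends to zero, by a compactness and one-sided-limits argument) before the uniform continuity of $f_{xx}$ can be invoked on the small-jump residuals. So your route is workable but trades the paper's self-calibrating partition for an extra regularity lemma that you should state and prove. Two smaller points: the paper's Step 1 truncates the integrands ($\psi^n=\psi\caract_{(0,\tau_n]}$, etc.) and must verify ucp convergence of \emph{every} term of the formula (via Corollary 6.20 of \cite{CCFM:SPDE} for the stochastic integrals, dominated convergence for the others), which your one-line reduction hides but is routine; and your reconciliation of the crude quadratic jump sum with the functional jump term is exactly the paper's identity $\tilde h(t,y,x)=h(t,y,x)+\tfrac12 f_{xx}(t,x)(y-x,y-x)$ handled by the Weierstrass M-test — indeed, writing the second-order jump limit with $\Delta X_s$ (rather than only the jumps of the stochastic integral) is the correct semimartingale statement, since $A$ may itself jump.
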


\begin{remark}\label{remaItoFormulaEscalarCase}
The second integral in this It\^{o} formula is a stochastic integral with respect to $M$. In fact, observe that for each 
$(\omega, s, u)$ and $h \in H$ we have $f_{x}(s, X_{s-}) \in \mathcal{L}(G,\R) \simeq G^{*}$, therefore 
$$\left\langle\! \left\langle f_{x}(s, X_{s-}), \Phi(s, u)\right\rangle\!\right\rangle_{G}(h)= \left\langle f_{x}(s, X_{s-}), \Phi(s, u)h \right\rangle_{G^{*},G}.$$
Moreover, for every $h \in H$ the mapping 
\begin{multline*}
(\omega,s,u)  \mapsto  \left\langle \!\left\langle f_{x}(s, X_{s-}), \Phi(s, u)\right\rangle \! \right\rangle_{G}(Q_{M}^{1/2}(\omega,s,u)h) \\
=  \left\langle f_{x}(s, X_{s-}), \Phi(\omega, s, u)Q_{M}^{1/2}((\omega,s,u)h \right\rangle_{G^{*},G},    
\end{multline*}
is $\mathcal{P}_{T}\otimes \mathcal{B}(U)/\mathcal{B}(\R)$-measurable.

Furthermore, we have $\Prob$-a.e. 
\begin{multline*}
\int_{0}^{T}\!\! \int_{U} \norm{\left\langle \! \left\langle f_{x}(s, X_{s-}), \Phi(s, u)\right\rangle \! \right\rangle_{G} Q_{M}^{1/2} }^{2}_{\mathcal{HS}(H,\R)} \operQuadraVari{M}(ds,du) \\
\leq \sup_{s \in [0,T]} \norm{ f_{x}(s, X_{s-})}_{\mathcal{L}(G,\R)}^{2} 
\int_{0}^{T}\!\! \int_{U} \norm{\Phi(s,u) Q_{M}^{1/2} }^{2}_{\mathcal{HS}(H,G)} \operQuadraVari{M}(ds,du)<\infty. 
\end{multline*}
The remaining three integrals in the formula are defined almost surely as Lebesgue integrals with respect to $\beta$, the Lebesgue measure, and the quadratic variation $\operQuadraVari{M^{c}}$, respectively. The fact that these integrals are well defined can be checked by following similar arguments to those used above. For the last term, observe that since $X$ has c\`{a}dl\`{a}g paths for any given $\omega$, only a countable number of values of $s$ contribute to the sum. 
\end{remark}

Before we prove Theorem \ref{theoItoFormulaScalarCase} we need a couple of preliminary results. The first result establishes a regularity property for the remainder term in the second-order 
Taylor's formula. This result is a Banach space version of Lemma $4.93$ in \cite{KarandikarRao:2018}.

\begin{lemma}
\label{lemmaRemainderTaylorFormula}
Consider $f\in {\mathcal{C}}^{1,2}([0,\infty)\times V)$, where $V$ is an open convex set on a Banach space $E$. We assume $f_{xx}$ is uniformly continuous on bounded subsets of $[0,\infty)\times V$. For $(t,y,x)\in I\times V\times V$ define
$$
h(t,y,x) := f(t,y) - f(t,x) - f_x(t,x)(y-x) -\tfrac{1}{2} f_{xx}(t,x)(y-x,y-x).
$$
This function can be written as
\begin{equation}
\label{eqFormulaFor-h}
   h(t,y,x) = \int_{0}^{1} (1-s) \left[ f_{xx}(t,x+s(y-x))-f_{xx}(t,x) \right] (y-x,y-x)ds. 
\end{equation}

Besides, for $T<\infty$, $K\subseteq V$ and $\delta>0$ define
$$
\Gamma(T,K,\delta) = \sup\left\{ \frac{|h(t,x,y)|}{\|y-x\|^2}: 0 \leq t\leq T,x,y\in K, 0<\|y-x\| \leq \delta \right\},
$$
and
$$
\Lambda(T,K) = \sup\left\{ \frac{|h(t,x,y)|}{\|y-x\|^2}: 0 \leq t\leq T,x,y\in K, x\neq y \right\}.
$$
For fixed $T<\infty$ and $K$ bounded we have
$$
\Lambda(T,K)<\infty,\qquad \lim_{\delta\rightarrow 0}\Gamma(T,K,\delta) =0.
$$
\end{lemma}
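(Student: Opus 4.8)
The plan is to establish the two parts of the statement in turn, deriving the supremum bounds from the integral representation \eqref{eqFormulaFor-h}.

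First I would prove \eqref{eqFormulaFor-h}. Fix $(t,y,x)$ and set $v=y-x$. Since $V$ is convex, the segment $\{x+sv:s\in[0,1]\}$ lies in $V$, so the scalar function $g(s):=f(t,x+sv)$ is well defined and of class $C^2$ on $[0,1]$, with $g'(s)=f_x(t,x+sv)(v)$ and $g''(s)=f_{xx}(t,x+sv)(v,v)$. Applying the one–dimensional Taylor formula with integral remainder, $g(1)=g(0)+g'(0)+\int_0^1(1-s)g''(s)\,ds$, gives $f(t,y)=f(t,x)+f_x(t,x)(v)+\int_0^1(1-s)f_{xx}(t,x+sv)(v,v)\,ds$. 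Subtracting the defining expression for $h(t,y,x)$ and writing $\tfrac12 f_{xx}(t,x)(v,v)=\int_0^1(1-s)f_{xx}(t,x)(v,v)\,ds$ (using $\int_0^1(1-s)\,ds=\tfrac12$) yields \eqref{eqFormulaFor-h}.

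For the bounds I would work on the bounded convex set $S:=[0,T]\times\mathrm{conv}(K)$, noting that $\mathrm{conv}(K)\subseteq V$ by convexity of $V$ and that, for $x,y\in K$, the whole segment $x+sv=(1-s)x+sy$ stays in $\mathrm{conv}(K)$. The main obstacle is that, since $E$ may be infinite dimensional, $S$ need not be relatively compact, so boundedness of $f_{xx}$ on $S$ cannot be read off from continuity alone; this is exactly where I would use the hypothesis that $f_{xx}$ is uniformly continuous on bounded subsets. Taking the modulus-of-continuity threshold $\delta_1$ associated with the choice $1$ (so that the values of $f_{xx}$ differ by less than $1$ whenever their arguments are within $\delta_1$), any point of $S$ can be joined to a fixed base point by a segment contained in $S$ (convexity) of length at most the finite diameter of $S$; subdividing this segment into $N$ steps of size $<\delta_1$ and telescoping gives a uniform bound $\norm{f_{xx}}_{\goth{Bil}(E,E)}\le C$ on $S$ for some finite $C$.

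With this bound the two estimates are immediate from \eqref{eqFormulaFor-h} together with $\abs{B(v,v)}\le\norm{B}_{\goth{Bil}(E,E)}\norm{v}^2$. For $\Lambda(T,K)$ I would bound the integrand by $\norm{f_{xx}(t,x+sv)-f_{xx}(t,x)}_{\goth{Bil}(E,E)}\le 2C$, so that $\abs{h(t,y,x)}\le C\norm{v}^2$ and hence $\Lambda(T,K)\le C<\infty$. For the limit, given any target accuracy I would choose $\delta_0$ from uniform continuity so that $\norm{f_{xx}(t,z')-f_{xx}(t,z)}_{\goth{Bil}(E,E)}$ is correspondingly small whenever $z,z'\in\mathrm{conv}(K)$ and $\norm{z'-z}\le\delta_0$; since $x,\,x+sv\in\mathrm{conv}(K)$ and $\norm{sv}\le\norm{v}\le\delta\le\delta_0$, the integrand in \eqref{eqFormulaFor-h} is uniformly small, whence $\abs{h(t,y,x)}\le\|v\|^2\cdot(\text{small})$ and $\Gamma(T,K,\delta)$ is controlled for all $\delta\le\delta_0$. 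This proves $\lim_{\delta\to0}\Gamma(T,K,\delta)=0$.
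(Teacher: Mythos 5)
Your proof is correct and follows essentially the same route as the paper: Taylor's formula with integral remainder along the segment to obtain \eqref{eqFormulaFor-h}, followed by uniform continuity of $f_{xx}$ on the bounded convex hull of $K$ to deduce both $\Lambda(T,K)<\infty$ and $\Gamma(T,K,\delta)\to 0$. Your explicit chaining argument for the boundedness of $f_{xx}$ on $[0,T]\times\mathrm{conv}(K)$ is a welcome detail that the paper leaves implicit (and by working with $\mathrm{conv}(K)$ rather than its closure you avoid any worry about the closed convex hull leaving $V$).
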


\begin{proof}
Fixing $t\in I$, we apply Taylor's expansion with integral remainder to get 
$$
\label{TVMIntegral}
f(t,y) - f(t,x) - f_x(t,x)(y-x) =  \int_0^1 (1-s)f_{xx}(t,x+s(y-x))(y-x,y-x)ds.
$$
From this we readily get \eqref{eqFormulaFor-h}  and
$$
\| h(t,y,x) \| \leq \| y-x\|^2 \int_{0}^{1} (1-s) \| f_{xx}(t,x+s(y-x))-f_{xx}(t,x) \|_{\goth{Bil}(E,E)} ds.
$$
Let $\hat{K}$ be the convex closure of $K$, which is bounded and contained in $V$. By the uniform continuity of $f_{xx}$ on $[0,T]\times \hat{K}$, we get the result.
\end{proof}

Our second preliminary result is a Riemann representation formula for the stochastic integral with respect to the It\^{o} process $X$. In its proof, the reader may note the crucial role played by the associativity of the stochastic integral (Theorem \ref{theoAssociatiStochIntegral}). Indeed, if $M$ had an optional quadratic variation (as mentioned in the introduction for section \ref{sectQuadraVariaStochIntegral}), the result would follow from the classical theory as in \cite{Metivier}.

\begin{lemma}\label{lemmaRiemannRepresentation}
For each $n \in \N$, let $(\tau^{n}_{k}: k \in \N)$ be a sequence of stopping times satisfying
\begin{enumerate}
    \item $0=\tau^{n}_{0} < \tau^{n}_{1} < \cdots < \tau^{n}_{k} < \cdots$,
    \item $ \forall n \in \N$,  $\tau^{n}_{k} \uparrow \infty$ as $k \rightarrow \infty $ $\Prob$-a.e.,
    \item $\sup_{k \in \N} (\tau^{n}_{k+1} - \tau^{n}_k ) \rightarrow 0$ as $n \rightarrow \infty$ $\Prob$-a.e.
\end{enumerate}
Let $X$ be a $G$-valued It\^{o} process of the form \eqref{eqDefiItoProcess} and let $F$ be a $\mathcal{L}(G,\R)$-valued adapted c\`{a}dl\`{a}g process. 
Then, for fixed $t \geq 0$ we have
\begin{multline*}
  \sum_{k=0}^{\infty} F\left(\tau_k^n \wedge t \right)\left(X_{\tau^{n}_{k+1} \wedge t}-X_{\tau^{n}_{k} \wedge t}\right)  \\
\mathop{\longrightarrow}^{\Prob}_{n \to \infty} \int_{0}^{t}\left\langle F(s-), \psi(s)\right\rangle_{G^{*}, G} d \beta(s)
+ \int_{0}^{t}\!\!\int_U \left\langle \! \left\langle F(s-), \Phi(s, u)\right\rangle \! \right\rangle_{G} M(ds, du).
\end{multline*}
\end{lemma}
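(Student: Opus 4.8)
The plan is to substitute the It\^o-process structure \eqref{eqDefiItoProcess} into each increment $X_{\tau^n_{k+1}\wedge t}-X_{\tau^n_k\wedge t}$ and split the Riemann sum into a part driven by $A$ and a part driven by $M$, handling the two separately. Write $\kappa_n(s):=\tau^n_k\wedge t$ for $s\in(\tau^n_k\wedge t,\tau^n_{k+1}\wedge t]$ and set $F_n(s):=F(\kappa_n(s))$; this is an $\mathcal{L}(G,\R)$-valued, adapted, left-continuous (hence predictable) step process. Since $\tau^n_0=0$ and $\tau^n_k\nearrow\infty$, the intervals $(\tau^n_k\wedge t,\tau^n_{k+1}\wedge t]$ partition $(0,t]$ with only finitely many nonempty, so every sum below is finite; and since $F$ is c\`adl\`ag and the mesh tends to $0$, one has $F_n(s)\to F(s-)$ as $n\to\infty$, for every $s$ and $\omega$.

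For the $A$-part, pulling the functional $F(\tau^n_k\wedge t)\in\mathcal{L}(G,\R)$ inside the $G$-valued Bochner integral gives
$$
\sum_{k} F(\tau_k^n\wedge t)\!\int_{\tau^n_k\wedge t}^{\tau^n_{k+1}\wedge t}\!\!\psi(s)\,dA(s)
= \int_0^t \langle F_n(s),\psi(s)\rangle_{G^*,G}\,dA(s).
$$
For fixed $\omega$ the integrand converges pointwise to $\langle F(s-),\psi(s)\rangle_{G^*,G}$ and is dominated by $\big(\sup_{r\le t}\|F(r)\|_{\mathcal{L}(G,\R)}\big)\|\psi(s)\|_G$, which is $|dA|$-integrable because $F$ has bounded paths on $[0,t]$ and $\psi\in L^1_{\mathrm{loc}}(\mathbb{R}_+,A;G)$. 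By the dominated convergence theorem applied pathwise, the $A$-part converges $\Prob$-a.e. (hence in probability) to $\int_0^t\langle F(s-),\psi(s)\rangle_{G^*,G}\,dA(s)$.

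For the $M$-part, set $I_r=\int_0^r\!\int_U\Phi(s,u)\,M(ds,du)$, so that the corresponding sum equals $\sum_k F(\tau^n_k\wedge t)\big(I_{\tau^n_{k+1}\wedge t}-I_{\tau^n_k\wedge t}\big)=\int_0^t F_n(s)\,dI_s$, the integral of the simple integrand $F_n$ against the $G$-valued (local) square integrable martingale $I$. By the associativity property (Theorem \ref{theoAssociatiStochIntegral}, extended to the locally integrable setting by localization), both $F_n$ and the limit $F(\cdot-)$ satisfy
$$
\int_0^t F_n(s)\,dI_s=\int_0^t\!\int_U\langle\!\langle F_n(s),\Phi(s,u)\rangle\!\rangle_G\,M(ds,du),\qquad
\int_0^t F(s-)\,dI_s=\int_0^t\!\int_U\langle\!\langle F(s-),\Phi(s,u)\rangle\!\rangle_G\,M(ds,du),
$$
the right-hand side of the second identity being exactly the target $M$-integral. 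Thus it remains to show $\int_0^t F_n\,dI\to\int_0^t F(\cdot-)\,dI$ in probability. By linearity and the It\^o isometry \eqref{eqItoIsometrySimpleIntegrands} (with $G$ replaced by $\R$), the $L^2$-size of this difference is controlled by
$$
\int_0^t\!\int_U \big\|\langle\!\langle F_n(s)-F(s-),\Phi(s,u)\rangle\!\rangle_G\,Q_M^{1/2}\big\|_{\mathcal{HS}(H,\R)}^2\,\operQuadraVari{M}(ds,du),
$$
whose integrand tends pathwise to $0$ and is bounded by $4\big(\sup_{r\le t}\|F(r)\|^2\big)\|\Phi(s,u)Q_M^{1/2}\|_{\mathcal{HS}(H,G)}^2$, an $\operQuadraVari{M}$-integrable majorant since $\Phi\in\Lambda^{2,\mathrm{loc}}(T,M)$; so the dominated convergence theorem forces this quantity to $0$ $\Prob$-a.e.

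To justify the It\^o-isometry step under mere local integrability, I would localize by stopping times $\sigma_m\nearrow\infty$ for which $\sup_{r\le\sigma_m}\|F(r)\|\le m$ and $\int_0^{\sigma_m}\!\int_U\|\Phi Q_M^{1/2}\|_{\mathcal{HS}(H,G)}^2\,\operQuadraVari{M}\le m$: on each $[0,\sigma_m]$ the integrands lie in the genuine $\Lambda^2$-space, the It\^o isometry yields $L^2$- and hence probabilistic convergence via the displayed dominated convergence, and letting $m\to\infty$ transfers this to $[0,t]$ because $\Prob(\sigma_m\ge t)\to1$; equivalently one may invoke the continuity of $I:\Lambda^{2,\mathrm{loc}}(T,M)\to\mathcal{M}_T^{2,\mathrm{loc}}(G)$ together with the convergence of the integrands in $\Lambda^{2,\mathrm{loc}}$ just established. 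Since convergence in probability is stable under addition, combining the $A$-part and $M$-part limits gives the asserted convergence. The main obstacle is precisely this $M$-driven step: recognizing the Riemann sum as a stochastic integral of a predictable step integrand against the Hilbert-space martingale $I$, and then passing to the limit when only locally $L^2$-integrability is available — difficulties dissolved by the associativity theorem and the localization procedure above.
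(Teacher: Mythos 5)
Your proof is correct, but it takes a genuinely different route from the paper's. The paper treats the full It\^{o} process $X$ as a $G$-valued semimartingale in one stroke: it observes that $(F(t-):t\geq 0)$ is predictable and locally bounded, invokes the Riemann representation theorem for stochastic integrals of Hilbert space-valued semimartingales (M\'etivier, Section 26) to get $\sum_{k} F(\tau^n_k\wedge t)\bigl(X_{\tau^n_{k+1}\wedge t}-X_{\tau^n_k\wedge t}\bigr)\xrightarrow{\Prob}\int_0^t F(s-)\,dX_s$ directly, and only then splits the limiting integral by linearity in the integrator, identifying the finite-variation part $\omega$-wise and the martingale part through Theorem \ref{theoAssociatiStochIntegral}. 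You split the Riemann sum \emph{first} and re-derive the convergence by hand: pathwise dominated convergence for the $A$-part, and for the $M$-part the step approximation $F_n\to F(\cdot-)$ in the localized $\Lambda^2$-sense, combined with the It\^{o} isometry, the continuity of $I$ on $\Lambda^{2,\mathrm{loc}}(M,T)$, and the same associativity theorem. What your route buys is self-containedness: the external citation is replaced by the paper's own isometry and localization machinery, at the cost of redoing the approximation argument — and note that your identification of the Riemann sum with $\int_0^t F_n(s)\,dI_s$ is itself the elementary case of M\'etivier's integral over stochastic intervals, so the classical machinery still enters, just through a smaller door. Two points worth tightening: the dominated convergence controlling the $M$-part should, on each localized interval $[0,\sigma_m]$, be run against the Dol\'eans measure $\mu_M$ (you apply it pathwise and then implicitly again in expectation, which is fine but should be said); and the application of Theorem \ref{theoAssociatiStochIntegral} to the limit integrand $F(\cdot-)$ should record that left limits of a c\`adl\`ag adapted process yield a predictable, locally bounded process — exactly the observation with which the paper's proof opens.
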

\begin{proof} Observe first that the process $(F(t-): t \geq 0)$ is predictable and locally bounded, therefore it is stochastically integrable with respect to $X$. 

Now, by the Riemann representation formula for the stochastic integral of a Hilbert space-valued semimartingale (see e.g. \cite{Metivier}, Section 26) we have
$$
\sum_{k=0}^{\infty} F\left(\tau_k^n \wedge t \right)\left(X_{\tau^{n}_{k+1} \wedge t}-X_{\tau^{n}_{k} \wedge t}\right)  
\mathop{\longrightarrow}^{\Prob}_{n \to \infty} \int_{0}^{t} F(s-) d X_s  .
$$
Consider the $G$-valued processes $Y$ and $Z$ given by
$$ Y(t)=\int_0^t \psi(s) d\beta(s), \quad  Z(t)=\int_0^t \!\! \int_U \Phi(s, u)\, M(ds, du). $$
Since $X_t=\xi+Y(t)+Z(t)$, by the linearity of the stochastic integral on the integrators we have 
$$
\int_{0}^{t}  F(s-)d X_s   =  \int_{0}^{t}  F(s-) d Y(s) + \int_{0}^{t} F(s-) d Z(s).     $$
Observe however that for a fixed $\omega \in \Omega$ we have
$$  \int_{0}^{t} F(s-)(\omega) d Y(s)(\omega) =  \int_{0}^{t}\left\langle F(s-)(\omega), \psi(s)\right\rangle_{G^{*}, G} d \beta(s)(\omega). $$
Furthermore, by Theorem \ref{theoAssociatiStochIntegral} we have
$$ \int_{0}^{t} F(s-) d Z(s) = \int_{0}^{t}\!\!\int_U \left\langle \! \left\langle F(s-), \Phi(s, u)\right\rangle \! \right\rangle_{G} M(ds, du).   $$
This completes the proof. 
\end{proof}

\begin{proof}[Proof of Theorem \ref{theoItoFormulaScalarCase}] For the reader's convenience, the proof is divided in two steps. We benefit from ideas used in the proofs of Theorem 2.139  in \cite{Lu_Zhang} and of Theorem 4.95 in \cite{KarandikarRao:2018}. \medskip

\textbf{Step 1.} We first prove the general case by assuming that the formula is valid under the assumption that the processes 
$$
\|X_{\cdot}\|_G,\quad \int_0^{\cdot}\|\psi(s)\|_G \, d \beta(s),\quad
\left\|\int_0^{\cdot} \!\!\int_U \Phi(s,u) M(ds,du)\right\|_G
$$
and
$$
\int_0^{\cdot} \!\!\int_U\left\|\Phi (s,u) Q_{M^{c}}^{1/2}\right\|_{\mathcal{HS}(H,G)}^2 \operQuadraVari{M^{c}}(ds,du)
$$
are uniformly bounded in $[0, t] \times \Omega
$. Define
\begin{eqnarray*}
   \tau_n & = & \inf \left\{s \in(0, t]: \left\|\int_0^s \!\!\int_U \Phi(r,u) M(dr,du)\right\|_G \right.\\
& {} & \hspace{25pt} \left. \lor \int_0^s \!\!\int_U\left\|\Phi(r,u) Q_{M^{c}}^{1/2} \right\|_{\mathcal{HS}(H,G)}^2 \operQuadraVari{M^{c}}(ds,du) \right.\left. \lor \int_0^s\|\psi(r)\|_G d \beta(r) \geqslant n\right\}
\end{eqnarray*}

(we take the infimum of the empty set as $t$). Observe that in the definition of $\tau_n$ the term $\norm{X_{\cdot}}_G$ is not present as its boundedness in guaranteed by that of the other terms. The general case can be deduced by using $\tau_n$ and considering the process
$$
X^n_t=\xi^{n}+\int_0^t \psi^n(s) d \beta(s)+\int_0^t \!\!\int_U \Phi^n(s, u)  M(ds,du),
$$ 
where $\xi^n(\omega)=\xi(\omega) \caract_{\{\|\xi(\omega)\|_G \leq n \}}$ and
$$
\psi^n(\omega,s)=\psi(\omega,s) \caract_{(0, \tau_n(\omega)]}(s), \qquad
\Phi^n(\omega,s, u)=\Phi(\omega,s,u) \caract_{(0, \tau_{n}(\omega)]}(s).
$$
Our assumption implies that the formula holds for $X^{n}$, that is 
$$
\begin{aligned}
& f(t, X^{n}_{t}) = f(0, \xi^{n})+\int_{0}^{t} f_{t}(s, X^n_{s-}) d s +\int_{0}^{t}\!\!\int_U \left\langle \! \left\langle f_{x}(s, X^n_{s-}), \Phi^{n}(s, u)\right\rangle \! \right\rangle_{G} M(ds, du) \\
& + \int_{0}^{t}\left\langle f_{x}(s, X^n_{s-}), \psi^{n}(s)\right\rangle_{G^{*}, G} d \beta(s) +\frac{1}{2} \int_{0}^{t} \!\!\int_U \mbox{Tr}_{\Phi^{n}(s, u) Q_{M^{c}}^{1 / 2}} \left( f_{x x}(s, X^n_{s-}) \right)  \operQuadraVari{M^{c}}(ds,du) \\
& +\sum_{0<s \leq t}\left[f\left(s,X^n_s\right)-f\left(s,X^n_{s-}\right)-f_x\left(s,X^n_{s-}\right)\left(\Delta X^n_{s} \right)\right].
\end{aligned}
$$

We must check that both sides in the above equality converge (at least for some subsequence) almost surely to the corresponding sides in the It\^{o} formula given by the Theorem. \medskip

Our first task is to check that  $f\left(t,X^n_t\right) \stackrel{ucp}{\longrightarrow} f(t, X_t)$. In order to do so,
we start by showing that $X^n \stackrel{ucp}{\longrightarrow} X$. In fact, by dominated convergence we have $\omega$-wise  
$$
\int_0^t \psi^{n}(s) d\beta(s) \rightarrow \int_0^t \psi(s) d\beta(s).
$$
Similarly, by monotone convergence we have 
\begin{flalign*}
& \Exp \int_0^T \!\!\int_U \norm{ \left(\Phi^{n}(s,u)-\Phi(s,u) \right)Q_{M}^{1/2}}_{\mathcal{HS}(H,G)} \operQuadraVari{M}(ds,du)  \\
&=\Exp \int_0^T \!\!\int_U \caract_{(\tau_n,t]}\norm{\Phi(s,u)Q_{M}^{1/2}}_{\mathcal{HS}(H,G)}  \operQuadraVari{M}(ds,du)  \searrow 0.     
\end{flalign*}

This shows that $\Phi^n \rightarrow \Phi$ in $\Lambda^2(M, T)$. By Corollary 6.20 in \cite{CCFM:SPDE} we have
$$
\int_0^{\cdot} \!\!\int_U \Phi^n(s,u) M(ds,du) \stackrel{ucp}{\longrightarrow} \int_0^{\cdot} \!\!\int_U\Phi(s,u) M(ds,du).
$$

Therefore $X^{n} \stackrel{ucp}{\longrightarrow}   X$ and so  $f\left(t, X^n_t\right) \stackrel{ucp}{\longrightarrow}  f(t, X_t)$. We also have  $\omega$-wise, $f\left(t, \xi^n \right) \rightarrow  f(t, \xi)$. 

Next we shall prove
\begin{multline}\label{eqUCPConverStochIntegItoFormulaApproxi}
\int_0^t \!\!\int_U \langle \! \langle f_{x} (s, X^n_{s-}), \Phi^n(s, u)\rangle \! \rangle_G \, M(d s, d u) \\
 \stackrel{ucp}{\longrightarrow}  \int_0^t \!\!\int_U \left\langle \! \left\langle f_{x}(s, X_{s-}), \Phi(s, u)\right\rangle \! \right\rangle_G \, M(d s, d u). 
\end{multline}
To do this, observe that the uniform continuity of $f_x$ and the definition of $\tau_n$ guarantee the existence of a constant $C>0$ such that
$\mathbbm{1}_{\left(\tau_n, t\right]} \| f_x (s, X_{s-}) \|_G^2 \leq C^2$. This implies  
$$
\begin{aligned}
& \mathbb{E} \int_0^t \int_{U} \norm{ \left(\langle \! \langle f_{x} (s, X^n_{s-}), \Phi^n(s, u)\rangle \! \rangle_G
- \left\langle \! \left\langle f_{x}(s, X_{s-}), \Phi(s, u)  \right\rangle \! \right\rangle_G \right) Q^{1/2}_{M} }_{\mathcal{HS}(G, \R)}^2 \operQuadraVari{M}(ds,du)   \\
& =\mathbb{E} \int_0^t \int_{U} \mathbbm{1}_{\left(\tau_n, t\right]} \norm{
\left\langle \! \left\langle f_{x}(s, X_{s-}), \Phi(s, u) \right\rangle \! \right\rangle_G Q^{1/2}_{M} }_{\mathcal{HS}(G, \R)}^2  \operQuadraVari{M}(ds,du)  \\
& \leq \mathbb{E} \int_0^t \int_{U} \mathbbm{1}_{\left(\tau_n, t\right]} \norm{
f_{x}(s, X_{s-})}^{2}_{G} \norm{\Phi(s, u) Q^{1/2}_{M}}_{\mathcal{HS}(H,G)}^2  \operQuadraVari{M}(ds,du)  \\
&  \leq C^2\|\Phi\|_{\Lambda^2(M, T)}^2<\infty.
\end{aligned}
$$

This allows us to use dominated convergence to deduce that
$$
 \mathbb{E} \int_0^t \int_{U} \norm{ \left(\langle \! \langle f_{x} (s, X^n_{s-}), \Phi^n(s, u)\rangle \! \rangle_G
- \left\langle \! \left\langle f_{x}(s, X_{s-}), \Phi(s, u)  \right\rangle \! \right\rangle_G \right) Q^{1/2}_{M} }_{\mathcal{HS}(G, \R)}^2 \operQuadraVari{M}(ds,du)
$$
converges to $0$. This and Corollary 6.20 in \cite{CCFM:SPDE} give us \eqref{eqUCPConverStochIntegItoFormulaApproxi}. \medskip

Following similar arguments we have $\Prob$-a.e. 
$$ 
\int_{0}^{t} f_{t}(s, X^n_{s-}) d s \longrightarrow \int_{0}^{t} f_{t}(s, X_{s-}) d s, 
$$
$$
\int_{0}^{t}\left\langle f_{x}(s, X^n_{s-}), \psi^{n}(s)\right\rangle_{G^{*}, G} d \beta(s) \longrightarrow \int_{0}^{t}\left\langle f_{x}(s, X_{s-}), \psi(s) \right\rangle_{G^{*}, G} d \beta(s),  
$$
and
\begin{multline*}
\int_{0}^{t} \!\!\int_U \mbox{Tr}_{\Phi^{n}(s, u) Q_{M^{c}}^{1 / 2}} \left( f_{x x}(s, X^n_{s-}) \right)
 \operQuadraVari{M^{c}}(ds,du) \\
 \longrightarrow 
 \int_{0}^{t} \!\!\int_U \mbox{Tr}_{\Phi(s, u) Q_{M^{c}}^{1 / 2}} \left( f_{x x}(s, X_{s-}) \right) \operQuadraVari{M^{c}}(ds,du).
\end{multline*}

It remains to deal with the convergence of the discrete part. It is enough to prove that
\begin{equation}
\label{labelConvNecesaria}
\sum_{0<s\leq t} \tilde{h}(s,X^n_s,X^n_{s-}) \rightarrow \sum_{0<s\leq t} \tilde{h}(s,X_s,X_{s-}) \quad \Prob-a.e.
\end{equation}
where $\tilde{h}(t,y,x)$ is defined as 
$$
\tilde{h}(t,y,x) := f(t,y) - f(t,x) - f_x(t,x)(y-x). 
$$

For any given $t \geq 0$, $\omega \in \Omega$, let 
$$K^{t,\omega}= \{ X_{s}(\omega): 0 \leq s \leq t\} \cup \{ X_{s-}(\omega): 0 \leq s \leq t \}.$$
Since $X$ have c\`adl\`ag paths, according to Proposition $1$ in \cite{Jakubowski:1986} the set $K^{t,\omega}$ is compact and coincides with the closure of the set 
$$ 
\{ X_{s}(\omega): 0 \leq s \leq t\}.
$$

Since $X^n_s \stackrel{ucp}{\longrightarrow} X_s$, $X^n_{s-} \stackrel{ucp}{\longrightarrow} X_{s-}$, and $\tilde{h}(s,\cdot,\cdot)$ is continuous, we have 
$$
\tilde{h}(s,X^n_s,X^n_{s-}) \stackrel{ucp}{\longrightarrow} \tilde{h}(s,X_s,X_{s-})\quad \textup{as } n\rightarrow \infty
$$
so we can assume (passing to a subsequence if necessary) that convergence is $\Prob$-a.e. Moreover, observe that 
$$ \tilde{h}(t,y,x) = h(t,y,x) + \tfrac{1}{2} f_{x x}(t,x)(y-x,y-x), $$
where $h$ is the function defined in Lemma \ref{lemmaRemainderTaylorFormula}. Uniform continuity of $f_{xx}$ and the definition of $\tau_n$ give the existence of $C>0$ such that
$\mathbbm{1}_{\left(\tau_n, t\right]} \| f_{xx} (s, X_{s-}) \|_{\goth{Bil}(G,G)} \leq C$. For fixed $\omega$, by Lemma \ref{lemmaRemainderTaylorFormula} we have
\begin{eqnarray*}
   \sum_{0<s\leq t} \sup_{n\in \N} |\tilde{h}(s,X^n_s,X^n_{s-})| & \leq & \left(\Lambda(t,K^{t,\omega}) +\frac{C}{2} \right)\sum_{0<s\leq t} \norm{\Delta X_s}_G^2 \\
   & \leq & \left(\Lambda(t,K^{t,\omega}) +\frac{C}{2} \right) [X]_t < \infty \quad \Prob-a.e.
\end{eqnarray*}
By Wieierstrass M-test we obtain uniform convergence in $n$ for  the sums on the left-hand side of \eqref{labelConvNecesaria}. This allows us to interchange the limit on $n$ with the sum over $s$, obtaining \eqref{labelConvNecesaria}.

Since convergence in probability implies the existence of a subsequence converging almost surely, we have shown the general case of our  It\^{o} formula under the assumption that it holds for the uniformly bounded case. \bigskip

\textbf{Step 2.} According to Step 1, we assume that $\|X_{\cdot}\|_{G}$, $\int_0^{\cdot} \| \psi(s)\|_G \, d\beta(s)$, as well as
$$
\left\| \int_{0}^{\cdot} \!\!\int_U \Phi(s, u) M(ds,du) \right\|_{G}, \quad
\int_0^{\cdot} \!\!\int_U \left\|  \Phi(s,u)Q_{M^{c}}^{1/2}\right\|^2_{\mathcal{HS}(H,G)}  \operQuadraVari{M^{c}}(ds,du)
$$
are uniformly bounded on $[0, T] \times \Omega$ by a constant $K$. \medskip

For each $n \in \N$, define a sequence $(\tau^{n}_{k}: k \in \N)$ of stopping times inductively as follows: $\tau^{n}_{0}=0$ and for $k \in \N$, 

$$\tau^n_{k+1} = \tilde{\tau}^n_{k+1} \land (\tau^n_k + 2^{-n}) $$ 
where
$$
\tilde{\tau}^n_{k+1}=\inf \left\{s > \tau^{n}_k : \max \left\{Z^n_k(s), Z^n_k(s-)\right\} \geq 2^{-n}\right\}
$$ 
with 
\begin{multline*}
Z^n_k(s) = \left\|\int_{\tau^n_{k}}^{s} \!\!\int_U \Phi(r, u) M(dr,du)\right\|_{G}  \\
\lor \int_{\tau^n_{k}}^{s} \!\!\int_U\left\|\Phi(r, u) Q_{M^{c}}^{1 /2}\right\|_{\mathcal{HS}(H,G)}^{2}  \operQuadraVari{M^{c}}(ds,du) 
\lor \int_{\tau^n_{k}}^{s} \| \psi(s)\|_{G} \, d \beta(s).
\end{multline*}
Notice that 
$$0=\tau^{n}_{0} < \tau^{n}_{1} < \cdots < \tau^{n}_{k} < \cdots ,$$ 
$$ \forall n \in \N, \quad \tau^{n}_{k} \uparrow \infty \mbox{ as } k \rightarrow \infty, $$
$$ (\tau^{n}_{k+1} - \tau^{n}_k ) \leq 2^{-n},$$   
and
$$
 \left\|\int_{\tau^{n}_{k}}^{\tau^{n}_{k+1}} \!\!\int_U \Phi(s,u) M(ds,du)\right\|_{G} \leq 2^{-n},\quad  \int_{\tau^{n}_{k}}^{\tau^{n}_{k+1}} \| \psi(s)\|_{G} d\beta(s) \leq 2^{-n}.
 $$

Fix $0 \leq t \leq T$. Observe
$$ 
I = f(t,X_t) - f(0,\xi) = \sum_{k=0}^{\infty}\left(U_k^n+V_k^n+ W_k^n +R_k^n\right) 
$$
where
\begin{eqnarray*}
U_k^n & = & f\left(\tau_{k+1}^n \wedge t, X_{\tau_{k+1}^n \wedge t} \right)-f\left(\tau_k^n \wedge t, X _{\tau_{k+1}^n \wedge t} \right) \\  
V_k^n & = & f_x\left(\tau_k^n \wedge t, X _{\tau^{n}_{k} \wedge t} \right)\left(X_{\tau^{n}_{k+1} \wedge t}-X_{\tau^{n}_{k} \wedge t}\right) \\ 
W_k^n & = & \tfrac{1}{2} f_{x x}\left(\tau_k^n \wedge t, X_{\tau_{k}^n \wedge t}  \right) \left(X_{\tau_{k+1}^n \wedge t}  - X_{\tau^{n}_{k} \wedge t}, X_{\tau_{k+1}^n \wedge t}  - X_{\tau^{n}_{k} \wedge t} \right) \\ 
R_k^n & = & h\left(\tau_k^n \wedge t, X_{\tau^n_{k+1} \wedge t}, X_{\tau^n_{k}\wedge t} \right)
\end{eqnarray*}

Define
$$ A_n=\sum_{k=0}^{\infty} U_k^n, \quad B_n=\sum_{k=0}^{\infty} V_k^n, \quad  C_n=\sum_{k=0}^{\infty} W_k^n  
\quad
D_n=\sum_{k=0}^{\infty} R_k^n.
$$ 

Notice that $\Prob$-a.e.
$$
A_{n} = \sum_{k=0}^{\infty} \int_{\tau^{n}_{k} \wedge t}^{\tau^{n}_{k+1} \wedge t} f_{s}(s,X_{\tau^{n}_{k+1} \wedge t}) \, ds  \rightarrow \int_{0}^{t} f_{s}(s, X_{s}) ds,
$$
when $n \rightarrow \infty$. Since for $\Prob$-a.e. $\omega$ we have $X_s(\omega)=X_{s-}(\omega)$ for all but countably many values of $s$, we conclude that 
$$A_{n} \mathop{\longrightarrow}^{\Prob}_{n \to \infty} \int_{0}^{t} f_{s}(s, X_{s-}) ds.
$$
Now, if we apply Lemma \ref{lemmaRiemannRepresentation} to $F(s)=f_{x}(s, X_s)$ we have 

$$
B_{n} \mathop{\longrightarrow}^{\Prob}_{n \to \infty} \int_{0}^{t}\langle f_{x}(s, X_{s-}, \psi(s)\rangle_{G^*, G} \, d\beta(s) + \int_{0}^{t} \!\!\int_U \operQuadraVari{f_{x}(s,X_{s-}),\Phi(s, u) }_{G} M(ds,du).
$$

By Proposition \ref{propRiemannQuadraticVariation} applied to $F(s)=f_{x x}(s, X_{s-})$ we conclude that 
$$ 
C_n \mathop{\longrightarrow}^{\Prob}_{n \to \infty} \frac{1}{2} \int_{0}^{t} \!\!\int_U \mbox{Tr}_{\Phi(s, u) Q_{M^{c}}^{1 / 2}} f_{x x}(s, X_{s-})   \operQuadraVari{M^{c}}(ds,du) + \frac{1}{2} \sum_{0<s \leq t} f_{x x}(s, X_{s-})\left(\Delta I_{s},\Delta I_s\right).
$$

To complete the proof, it suffices to prove that
$$
D_n = \sum_{k=0}^{\infty} \, h(\tau_k^n \wedge t, X_{\tau_{k+1}^n \wedge t} , X_{\tau_{k}^n \wedge t} ) \xrightarrow{\Prob} \sum_{0<s \leq t} \, h\left(s, X_s, X_{s-}\right).
$$ 
Now we partition the index $k$ into three sets: 
$$
H^n(\omega)=\left\{k \geqslant 0: \tau_{k+1}^n(\omega) \wedge t=\tau_k^n(\omega) \wedge t\right\}.
$$

$$
E^n(\omega)=\left\{k \notin H^n:\left\|\Delta X_{\tau_{k+1}^n \wedge t}(\omega)\right\|_G \leq  4 \cdot 2^{-n}\right\}.
$$

$$
F^n(\omega)=\left\{k \notin H^n:\left\|\Delta X_{\tau_{k+1}^n \wedge t}(\omega)\right\|_G > 4 \cdot 2^{-n}\right\}.
$$

Observe that the sets $H^{n}(\omega)$, $E^{n}(\omega)$ and $F^{n}(\omega)$ correspond respectively to the cases where $h (\tau_k^n \wedge t, X_{\tau_{k+1}^n \wedge t} , X_{\tau_{k}^n \wedge t} )$ is zero, small, and large.

For $k \in H^n$,  $h\left(\tau_k^n(\omega) \wedge t, X_{\tau_{k+1}^n}(\omega), X_{\tau_{k+1}^n}(\omega)\right)=0$. 
Therefore $D_n = D_n^1 + D_n^2$, where 
$$
\begin{aligned} 
& D_n^1=\sum_{k \in E^n(\omega)} h\left(\tau_k^n(\omega) \wedge t, X_{\tau_{k+1}^n \wedge t}(\omega), X_{\tau_{k}^n \wedge t}(\omega)\right) \\ 
& D_n^2=\sum_{k \in F^{n}(\omega)}  h\left(\tau_k^n(\omega) \wedge t, X_{\tau_{k+1}^n \wedge t}(\omega), X_{\tau_k^n \wedge t}(\omega)\right) .
\end{aligned} 
$$

Given $k$, we take $v,w\in (\tau^n_k, \tau^n_{k+1} )$. We have

\begin{eqnarray*}
\left\|X_v-X_w\right\|_G & \leq & 
\left\|X_v-X_{\tau_{k}^n} \right\|_G + \left\|X_w-X_{\tau_{k}^n} \right\|_G \\
 & \leq & 2 \cdot \max_{\tau_{k}^n < v < \tau_{k+1}^n} \left\{ \left\| \int_{\tau_{k}^n}^{v} \int_U \Phi(r,u) M(dr,du) \right\|_G + \int_{\tau_k^n}^{v} \norm{\psi(s)}_G d\beta(s) \right\} \\
& \leq & 4 \cdot 2^{-n}.
\end{eqnarray*}

It follows that $v \in (\tau_k^n, \tau_{k+1}^n)$ implies $\abs{X_{v^-} - X_v} \leq 4 \cdot 2^{-n}$, so for $s\in (0,t]$
\begin{equation}
\label{eqSizeOfJumpsItoFormula}
\left\|\Delta X_s(\omega)\right\|_G > 4 \cdot 2^{-n} \Rightarrow
s =\tau_{k+1}^n \wedge t \,\, \text{ for some } k \in F^n(\omega).
\end{equation}

Therefore, for $k \in E^n(\omega)$
$$
\left\|X_{\tau_{k+1}^n}(\omega)-X_{\tau_k^n \wedge t}(\omega)\right\|_G \leq \left\|X_{(\tau_{k+1}^n \wedge t)^{-}}(\omega)-X_{\tau_k^n \wedge t}(\omega)\right\|_G
 +\left\|\Delta X_{\tau_{k+1}^n \wedge t}(\omega)\right\|_G \leq 6 \cdot 2^{-n}.
$$

Consequently
\begin{eqnarray*}
\left|D_n^1\right| & \leq & \sum_{k \in E^{n}(\omega)}\left| h \left(\tau_k^n(\omega) \wedge t, X_{\tau_{k+1}^n \wedge t}(\omega), X_{\tau_k^n \wedge t}(\omega)\right) \right| \\
 & \leq & \Gamma\left(K^{t,\omega}, 6 \cdot 2^{-n}\right) \sum_{k=0}^{\infty} \left\| X_{\tau_{k+1}^n \wedge t}(\omega)-X_{\tau_{k \wedge t}^n}(\omega)\right\|_G^2.
\end{eqnarray*}

Since 
$$
\sum_{k=0}^{\infty}\left\|X_{\tau_{k+1}^n \wedge t}-X_{\tau_{k}^n \wedge t} \right\|_G^2 \xrightarrow{\Prob}[X]_t
$$
and $\Gamma\left(K^{t,\omega}, 6 \cdot 2^{-n}\right) \rightarrow 0$ for fixed $\omega$, by \eqref{eqSizeOfJumpsItoFormula} it follows that 
$$
D_n^1 \xrightarrow{\Prob} 0.
$$

Now, we must show that
$$
D_n^2 \xrightarrow{\Prob} \sum_{0<s \leq t}   h\left(s, X_s, X_{s^-}  \right).
$$

Let $J(\omega)=\left\{0 < s \leq t:\norm{\Delta X_s(\omega)}_{G} >0\right\}$.
Since $X$ is c\`{a}dl\`{a}g and $G$ is Hilbert, $J(\omega)$ has a countable number of elements for $\Prob$-a.e. $\omega$.

Given $\omega\in\Omega$ and $s\in J(\omega)$, define
$$
a_s^n(\omega):=\sum_{k \in F^n(\omega)}  
 h\left(\tau_k^n(\omega) \wedge t, X_{\tau_{k+1}^n  \wedge t}(\omega) , X_{\tau_k^n \wedge t}(\omega)  \right) \caract_{ \{ (\tau_{k+1}^n(\omega)  \wedge t)= s \}}(\omega) .
$$
We then have
$$
D_n^2(\omega) = \sum_{s \in J(w)} a_s^n(\omega).
$$


Because of \eqref{eqSizeOfJumpsItoFormula}, if $\| \Delta X_s(\omega) \|_{G} > 4 \cdot 2^{-n}$ we have

$$
a_s^n(w)=  h\left(\tau_k^n(\omega) \wedge t, X_s(\omega), X_{\tau_k^n \wedge t}(w)\right) \text { where } s = \tau_{k+1}^n(\omega) \wedge t.
$$

This shows that 
\begin{equation}
    \label{eqConv-ah}
    a_s^n(\omega) \rightarrow  h\left(s, X_s(w), X_{s^-}(\omega)\right),\quad \Prob-\text{a.e.}
\end{equation}

For $k \in F^n(\omega)$
\begin{eqnarray*}
\left\|X_{\tau_{k+1}^n \wedge t}(w)-X_{\tau_{k}^n  \wedge t}(\omega) \right\|_G  & \leq & \left\|X_{\left(\tau_{k+1}^n \wedge t\right)-}(\omega)-X_{\tau_k^n \wedge t}(w)\right\|_G + \left\|\Delta X_{\tau_{k+1}^n \wedge t}(\omega) \right\|_G \\
& \leq & 4 \cdot 2^{-n}+\left\|\Delta X_{\tau_{k+1}^n \wedge t} (\omega)\right\|_{G} \\
& \leq & 2 \left\|\Delta X_{\tau_{k+1}^n \wedge t} (\omega)\right\|_{G}.
\end{eqnarray*}

Given $\left|a_s^n(w)\right| \neq 0$, it follows that $s=\tau_{k+1}^n(\omega) \wedge t$ for some $k \in F^n(\omega)$, so
$$
\begin{aligned}
\left|a_s^n(\omega)\right| & \leq \Lambda\left(K^{t, \omega}\right)\left\|X_{\tau_{k+1}^n \wedge t}(\omega)-X_{\tau_k^n \wedge t}(\omega)\right\|_G^2 \\
& \leq 4 \Lambda(K^{t,\omega}) \left\|\Delta X_{\tau_{k+1}^n \wedge t}(\omega)\right\|_G^2 \\
& = 4 \Lambda\left(K^{t, \omega}\right)\left\|\Delta X_s(\omega)\right\|_G^2.
\end{aligned}
$$

In this way
$$
\sum_{s \in J(\omega)}\left|a_s^n(\omega)\right| \leq 4 \Lambda\left(K^{t,\omega}\right) \sum_{s \in J(\omega)}\left\|\Delta X_s(\omega)\right\|_G^2
$$
and we have
$$
\sum_{s \in J(\omega)}\left\|\Delta X_s(\omega)\right\|_G^2 \leq[X]_t(\omega)<\infty \quad \Prob\text{-a.e.}
$$
By Weierstrass $M$-test and \eqref{eqConv-ah} we conclude
$$
D_n^2=\sum_{s \in J(\omega)} a_s^n(\omega) \rightarrow \sum_{s \in J(\omega)} h\left(s, X_s(\omega), X_{s-}(\omega)\right) \quad \Prob\text{-a.e.} 
$$
and this implies convergence in probability. This completes the proof. 
\end{proof}

\begin{remark}
    It is worth mentioning some aspects on the arguments used in the proof of Theorem \ref{theoItoFormulaScalarCase}. In Step 1 we follow ideas in the proof of Theorem 2.139 in \cite{Lu_Zhang}, with careful modifications due to the presence of jumps in the It\^{o} process. In particular, the treatment of jumps was rather based on the proof of Theorem 4.95 of \cite{KarandikarRao:2018}. Arguments in Step 2 are based mainly in modifications of the proof of Theorem 4.95 of \cite{KarandikarRao:2018}, which are for the finite dimensional semimartingale case. Since we do not have a suitable definition of an integral for a cylindrical semimartingale-valued measure, we can not treat the It\^{o} process as a semimartingale. Therefore, a particular approach must be used for each type of integral involved in the definition of the It\^{o} process. Besides, as $M$ does not have an optional quadratic variation, we were forced to reformulate Riemann type approximations for the stochastic integral an its quadratic variation (Proposition \ref{propRiemannQuadraticVariation} and Lemma \ref{lemmaRiemannRepresentation}).
\end{remark}

Now we state and prove the Hilbert space-valued version of last theorem.

\begin{theorem}[It\^{o} formula, vector case]\label{theoItoFormulaVectorCase}
Let $\xi$, $\beta$, $\psi$, $\Phi$ and $X$ be as in Definition \ref{defiItoProcess}. Let $f:[0, \infty) \times G \rightarrow K$ be a function of class $C^{1,2}$. We assume $f_{xx}$ is uniformly continuous on bounded subsets of $[0,\infty)\times G$. Then  almost surely we have for every $t\geq 0$
$$
\begin{aligned}
& f(t, X_t) = f(0, \xi)+ \int_{0}^{t} f_{t}(s, X_{s-}) d s +\int_{0}^{t}\!\!\int_U  f_{x}(s, X_{s-}) \Phi(s, u)  M(ds, du) \\
& + \int_{0}^{t} f_{x}(s, X_{s-}) \psi(s) d \beta(s)   +\frac{1}{2} \int_{0}^{t} \!\!\int_U 
\mbox{Tr}_{\Phi(s, u) Q_{M^{c}}^{1 / 2}} \left( f_{x x}(s, X_{s-}) \right)
\operQuadraVari{M^{c}}(ds,du) \\
 & +\sum_{0<s \leq t}\left[f\left(s,X_s\right)-f\left(s,X_{s-}\right)-f_x\left(s,X_{s-}\right)\left(\Delta X_{s} \right)\right]
\end{aligned}
$$
\end{theorem}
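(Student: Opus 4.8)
The plan is to deduce the $K$-valued formula from the scalar formula of Theorem~\ref{theoItoFormulaScalarCase} by testing against functionals and invoking Hahn--Banach. Identifying $K^{*}$ with $K$, I fix $k \in K$ and set $f_{k}(t,x) \defeq \inner{f(t,x)}{k}_{K}$, a real-valued map on $[0,\infty) \times G$. Since $\inner{\cdot}{k}_{K}$ is bounded and linear, $f_{k}$ is again of class $C^{1,2}$, with $(f_{k})_{t} = \inner{f_{t}}{k}_{K}$, with $(f_{k})_{x}(s,x) = k \circ f_{x}(s,x) \in G^{*}$ (so that $(f_{k})_{x}(s,x)(v) = \inner{f_{x}(s,x)v}{k}_{K}$), and with $(f_{k})_{xx}(s,x)(a,b) = \inner{f_{xx}(s,x)(a,b)}{k}_{K}$. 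From $\norm{(f_{k})_{xx}(t,x) - (f_{k})_{xx}(t,y)}_{\goth{Bil}(G,G)} \le \norm{k}_{K}\,\norm{f_{xx}(t,x) - f_{xx}(t,y)}_{\goth{Bil}(G,G;K)}$ it follows that $(f_{k})_{xx}$ inherits the uniform continuity of $f_{xx}$ on bounded sets, so $f_{k}$ satisfies the hypotheses of Theorem~\ref{theoItoFormulaScalarCase}.

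Applying the scalar It\^o formula to $f_{k}$ yields an identity whose six terms I would match, one at a time, with the image under $\inner{\cdot}{k}_{K}$ of the corresponding $K$-valued terms of the claimed formula. The drift integral, the $dA$-integral and the $\operQuadraVari{M^{c}}$-integral are $K$-valued Bochner integrals, and a bounded functional passes through a Bochner integral, giving for instance $\int_{0}^{t}(f_{k})_{t}(s,X_{s-})\,ds = \inner{\int_{0}^{t}f_{t}(s,X_{s-})\,ds}{k}_{K}$ and similarly for the $dA$-term. For the trace term, the defining series $\mbox{Tr}_{S}(\zeta) = \sum_{j}\zeta(Sh_{j},Sh_{j})$, the continuity of $\inner{\cdot}{k}_{K}$, and the absolute convergence ensured by $\norm{\mbox{Tr}_{S}(\zeta)}_{K} \le \norm{\zeta}_{\goth{Bil}(G,G;K)}\norm{S}_{\mathcal{HS}(H,G)}^{2}$ give $\inner{\mbox{Tr}_{\Phi Q_{M^{c}}^{1/2}} f_{xx}(s,X_{s-})}{k}_{K} = \mbox{Tr}_{\Phi Q_{M^{c}}^{1/2}}(f_{k})_{xx}(s,X_{s-})$, so the scalar trace integral is the $k$-pairing of the $K$-valued one. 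The single nonelementary identification is the stochastic integral: writing the $K$-valued integrand as $f_{x}(s,X_{s-})\Phi(s,u) \in \mathcal{HS}(H,K)$ and noting $\inner{f_{x}(s,X_{s-})\Phi(s,u)h}{k}_{K} = \left\langle \! \left\langle (f_{k})_{x}(s,X_{s-}), \Phi(s,u)\right\rangle \! \right\rangle_{G}(h)$, Proposition 6.15 in \cite{CCFM:SPDE} (the same projection result used in the proof of Theorem~\ref{theoConditionalItoIsometry}) yields
\[
\inner{\int_{0}^{t}\!\!\int_U f_{x}(s, X_{s-})\Phi(s, u)\, M(ds,du)}{k}_{K}
= \int_{0}^{t}\!\!\int_U \left\langle \! \left\langle (f_{k})_{x}(s, X_{s-}), \Phi(s, u)\right\rangle \! \right\rangle_{G}\, M(ds, du),
\]
which is exactly the scalar stochastic integral for $f_{k}$.

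For the jump series I would first record that the $K$-valued sum $\sum_{0<s\le t}\big[f(s,X_{s}) - f(s,X_{s-}) - f_{x}(s,X_{s-})(\Delta X_{s})\big]$ converges absolutely $\Prob$-a.e. Indeed, the integral-remainder representation \eqref{eqFormulaFor-h} holds verbatim for the $K$-valued map $f$, so on the compact set $K^{t,\omega} = \overline{\{X_{s}(\omega): 0\le s\le t\}}$ each summand is bounded in $K$-norm by a constant times $\norm{\Delta X_{s}}_{G}^{2}$, and $\sum_{0<s\le t}\norm{\Delta X_{s}}_{G}^{2} \le [X]_{t} < \infty$ as in the proof of Theorem~\ref{theoItoFormulaScalarCase}. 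Absolute convergence then lets me interchange $\inner{\cdot}{k}_{K}$ with the sum, so the scalar jump series for $f_{k}$ is the $k$-pairing of the $K$-valued one. Collecting the six identifications, for each fixed $k\in K$ I obtain, on a $\Prob$-full event and for all $t\ge 0$, that $\inner{f(t,X_{t})}{k}_{K}$ equals $\inner{\cdot}{k}_{K}$ applied to the $K$-valued right-hand side $Y_{t}$.

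The last step removes the dependence of the exceptional null set on $k$. I would fix a countable dense sequence $(k_{m})_{m\in\N}$ in $K$, apply the preceding conclusion to each $k_{m}$, and discard the countable union of the corresponding null sets; on the remaining $\Prob$-full event the equalities $\inner{f(t,X_{t})}{k_{m}}_{K} = \inner{Y_{t}}{k_{m}}_{K}$ hold simultaneously for all $m$ and all $t$. Since $(k_{m})$ is dense and both $f(t,X_{t})$ and $Y_{t}$ are genuine elements of $K$, this forces $f(t,X_{t}) = Y_{t}$ for all $t$, $\Prob$-a.e., which is the asserted formula. I expect the main obstacle to be the stochastic-integral identification: one must verify that $f_{x}(s,X_{s-})\Phi(s,u)$ is a bona fide element of $\Lambda^{2,\mathrm{loc}}(M,T;H,K)$ --- predictability and local integrability being controlled exactly as in Remark~\ref{remaItoFormulaEscalarCase}, using the local boundedness of $s\mapsto\norm{f_{x}(s,X_{s-})}$ along $X$'s c\`adl\`ag paths --- and that the scalarization of Proposition 6.15 in \cite{CCFM:SPDE} applies without change in the vector-valued setting; the remaining identifications are routine interchanges of a bounded functional with Bochner integrals and absolutely convergent sums.
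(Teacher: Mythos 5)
Your proposal is correct and follows essentially the same route as the paper: scalarize via $f_{k}(t,x)=\inner{f(t,x)}{k}_{K}$, apply Theorem \ref{theoItoFormulaScalarCase}, identify each of the six terms as the $k$-pairing of the corresponding $K$-valued term (with the stochastic integral handled by Proposition 6.15 of \cite{CCFM:SPDE}), and conclude via a countable dense family of functionals with a single discarded null set. You merely spell out a few points the paper leaves implicit (the inherited uniform continuity of $(f_{k})_{xx}$, the absolute convergence of the jump series, the membership $f_{x}(s,X_{s-})\Phi(s,u)\in\Lambda^{2,\mathrm{loc}}(M,T;H,K)$), and your density argument in the Hilbert space $K$ is equivalent to the paper's Hahn--Banach step.
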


\begin{remark}
The second integral in the first line is a stochastic integral. The stochastic integrability of $(\omega,s,u)  \mapsto  f_{x}(s, X_{s-}(\omega)) \Phi(\omega,s, u)$ with respect to $M$ can be proved by following similar arguments to those used in Remark \ref{remaItoFormulaEscalarCase}. The remaining three integrals are defined almost surely as $K$-valued Bochner integrals with respect to $\beta$, the Lebesgue measure, and the quadratic variation $\operQuadraVari{M^{c}}$. 
\end{remark}

\begin{proof}[Proof of Theorem \ref{theoItoFormulaVectorCase}]
Let $k \in K$ and consider the function $F:[0,T] \times G \rightarrow \R$ defined as $F(t,g)= \inner{f(t,g) }{ k}_{K}$. Then $F$ is of class $C^{1,2}$ and for every $h \in H$ we have
\begin{eqnarray*}
\left\langle \! \left\langle F_{x}(s, X_{s-}), \Phi(s, u)\right\rangle \! \right\rangle_{G}(h)
& = &  \left\langle \, ( f_{x}(s, X_{s-}), k )_{K}, \Phi(s, u)h \right\rangle_{G^{*},G} \\
& = & ( f_{x}(s, X_{s-})\Phi(s,u), k  )_{K}(h).    
\end{eqnarray*}
Therefore, almost surely
$$ 
\int_{0}^{t}\!\!\int_U \left\langle \! \left\langle F_{x}(s, X_{s-}), \Phi(s, u)\right\rangle \! \right\rangle_{G} M(ds, du)
= \inner{ \int_{0}^{t}\!\!\int_U  f_{x}(s, X_{s-}), \Phi(s, u)  M(ds, du)}{  k }_{K}.
$$
Likewise, one can show that almost surely 
$$\int_{0}^{t} F_{t}(s, X_{s-}) d s =   \inner{  \int_{0}^{t} f_{t}(s, X_{s-}) d s}{  k }_{K}
$$
$$ \int_{0}^{t}\left\langle F_{x}(s, X_{s-}), \psi(s)\right\rangle_{G^{*}, G} d \beta(s) = 
\inner{\int_{0}^{t} f_{x}(s, X_{s-}) \psi(s) d \beta(s)}{k}_{K}   $$
\begin{multline*}
\int_{0}^{t} \!\!\int_U \mbox{Tr}_{\Phi(s, u) Q_{M^{c}}^{1 / 2}} \left( F_{x x}(s, X_{s-}) \right) \operQuadraVari{M^{c}}(ds,du) \\
= \inner{\int_{0}^{t} \!\!\int_U 
\mbox{Tr}_{\Phi(s, u) Q_{M^{c}}^{1 / 2}} \left( f_{x x}(s, X_{s-}) \right) \operQuadraVari{M^{c}}(ds,du)}{k}_{K}   
\end{multline*}
and 
\begin{multline*}
\sum_{0<s \leq t}\left[F\left(s,X_s\right)-F\left(s,X_{s-}\right)-F_x\left(s,X_{s-}\right)\left(\Delta X_{s} \right)  \right] \\
= \inner{\sum_{0<s \leq t}\left[f\left(s,X_s\right)-f\left(s,X_{s-}\right)-f_x\left(s,X_{s-}\right)\left(\Delta X_{s} \right)  \right] }{k }_{K}.
\end{multline*}

Hence by applying the scalar case (Theorem \ref{theoItoFormulaScalarCase}) to $F$ and by using the above identities, almost surely we have 
\begin{eqnarray*}
( f(t, X_t), k \, )_{K} 
& = &  \biggl( \, f(0, \xi)+\int_{0}^{t}\!\!\int_U  f_{x}(s, X_{s-}) \Phi(s, u)  M(ds, du)   \\
&  {} & +\int_{0}^{t} f_{t}(s, X_{s-}) d s + \int_{0}^{t} f_{x}(s, X_{s-}) \psi(s) d \beta(s)    \\
& {} &  +\frac{1}{2} \int_{0}^{t} \!\!\int_U 
\mbox{Tr}_{\Phi(s, u) Q_{M^{c}}^{1 / 2}} \left( f_{x x}(s, X_{s-}) \right) \operQuadraVari{M^{c}}(ds,du)  \\
& {} & + \sum_{0<s \leq t}\left[f\left(s,X_s\right)-f\left(s,X_{s-}\right)-f_x\left(s,X_{s-}\right)\left(\Delta X_{s} \right)  \right]  ,k \,  \biggr)_{K}.
\end{eqnarray*}


Let $(k_{i}:i \in \N)$ be a dense subset in $K$ and let $L$ denote the $\mathcal{Q}$-span of $(k_{i}:i \in \N)$. Then almost surely the above identity holds true for every $k \in L$. By the Hahn-Banach theorem we obtain the result. 
\end{proof}

\begin{corollary}[It\^{o} formula in the continuous paths setting]\label{coroItoVectorContinuousCase}

With the notation in Theorem \ref{theoItoFormulaVectorCase}, assume further that for each $A\in\mathcal{A}$ and $h\in H$ the real-valued stochastic process $M(\cdot,A)(h)$ is continuous.  Then  almost surely we have for every $t\geq 0$
$$
\begin{aligned}
& f(t, X_t) = f(0, \xi)+\int_{0}^{t}\!\!\int_U  f_{x}(s, X_s) \Phi(s, u)  M(ds, du)  + \int_{0}^{t} f_{x}(s, X_s) \psi(s) d \beta(s)  \\
&  +\int_{0}^{t} f_{t}(s, X_s) d s +\frac{1}{2} \int_{0}^{t} \!\!\int_U 
\mbox{Tr}_{\Phi(s, u) Q_{M}^{1 / 2}} \left( f_{x x}(s, X_s) \right)
\operQuadraVari{M}(ds,du) 
\end{aligned}
$$
\end{corollary}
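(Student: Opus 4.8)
The plan is to obtain this corollary as a direct specialization of the vector-valued It\^o formula (Theorem \ref{theoItoFormulaVectorCase}): I will show that the continuity hypothesis on $M$ collapses the two ``discontinuous'' ingredients of that formula, namely the distinction between $M^{c}$ and $M$ and the jump sum, leaving precisely the stated identity. Nothing new needs to be integrated; the whole argument is a bookkeeping reduction combined with a continuity observation.

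First I would record the effect of the hypothesis on the decomposition of Section \ref{subSecContAndDisconCMVM}. Since each $M(\cdot,A)(h)$ is a continuous square integrable martingale, its purely discontinuous part vanishes, so by the very definition of $M^{c}$ and $M^{d}$ we have $M^{c}(t,A)(h)=[M(t,A)(h)]^{c}=M(t,A)(h)$ and $M^{d}(t,A)(h)=0$ for all $t\geq 0$, $A\in\calA$, $h\in H$. Consequently the intensity measures satisfy $\nu^{c}_{h}=\nu_{h}$ and $\nu^{d}_{h}=0$ $\Prob$-a.e., whence $\operQuadraVari{M^{c}}=\operQuadraVari{M}$ and $\langle\!\langle M^{d}\rangle\!\rangle=0$ $\Prob$-a.e. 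Substituting $\operQuadraVari{M^{c}}=\operQuadraVari{M}$ and $\langle\!\langle M^{d}\rangle\!\rangle=0$ into Lemma \ref{lemmaQMCombOfQMCAndQMD} then forces $Q_{M^{c}}=Q_{M}$ $\operQuadraVari{M}$-a.e. In particular, $\Phi\, Q_{M^{c}}^{1/2}=\Phi\, Q_{M}^{1/2}$ $\operQuadraVari{M}$-a.e., so the trace term of Theorem \ref{theoItoFormulaVectorCase} may be rewritten verbatim with $Q_{M}$ and $\operQuadraVari{M}$ in place of $Q_{M^{c}}$ and $\operQuadraVari{M^{c}}$.

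Next I would argue that $X$ has continuous paths. The stochastic integral $\int_{0}^{\cdot}\!\int_{U}\Phi(s,u)\,M(ds,du)$ is continuous by Proposition 6.14 in \cite{CCFM:SPDE}, precisely because every $M(\cdot,A)(h)$ is continuous; together with the continuity of the finite-variation part $\int_{0}^{\cdot}\psi(s)\,dA(s)$ this gives that $X$ is continuous, so that $X_{s-}=X_{s}$ for every $s$ and $\Delta X_{s}=0$ for all $s$. Feeding these facts into Theorem \ref{theoItoFormulaVectorCase} replaces every occurrence of $X_{s-}$ by $X_{s}$ (the integrands coincide pathwise, not merely a.e., so no stochastic integral changes), and makes each summand $f(s,X_{s})-f(s,X_{s-})-f_{x}(s,X_{s-})(\Delta X_{s})$ identically zero, so the jump sum disappears. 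Combined with the rewriting of the trace term from the previous step, this yields exactly the asserted formula.

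The delicate point is the continuity of $X$. The hypothesis is imposed only on $M$, so the integral part is continuous, but the finite-variation part inherits the jumps of $A$; for $X$ to be continuous, and hence for the jump sum to vanish and $X_{s-}$ to coincide with $X_{s}$, one also needs $\int_{0}^{\cdot}\psi\,dA$ to be continuous, which holds precisely when $A$ has continuous paths, the standing assumption implicit in the continuous-paths setting. If $A$ were allowed to jump, the increments $\Delta X_{s}=\psi(s)\Delta A(s)$ would survive and the jump sum could not be dropped. Thus confirming (or assuming) the continuity of the drift is the one genuine obstacle, the remaining manipulations being the routine substitutions of $M^{c}$ by $M$ and of $X_{s-}$ by $X_{s}$ described above.
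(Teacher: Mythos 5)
Your proof is correct and follows essentially the same route as the paper's: Proposition 6.14 of \cite{CCFM:SPDE} gives continuity of the stochastic integral part, hence of $X$, so the jump sum in Theorem \ref{theoItoFormulaVectorCase} vanishes and $X_{s-}$ may be replaced by $X_{s}$ throughout. You are, however, more careful than the paper on two points, and both additions are worth keeping. First, you explicitly derive $M^{c}=M$ and $M^{d}=0$, hence $\operQuadraVari{M^{c}}=\operQuadraVari{M}$ and, via Lemma \ref{lemmaQMCombOfQMCAndQMD}, $Q_{M^{c}}=Q_{M}$ $\operQuadraVari{M}$-a.e.; this identification is needed to pass from the trace term of Theorem \ref{theoItoFormulaVectorCase} (written with $Q_{M^{c}}$ and $\operQuadraVari{M^{c}}$) to the stated formula (written with $Q_{M}$ and $\operQuadraVari{M}$), and the paper's three-line proof leaves it entirely implicit. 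Second, your ``delicate point'' is a genuine one: the hypothesis on $M$ alone only makes the stochastic-integral part continuous, while the drift part $\int_{0}^{\cdot}\psi(s)\,dA(s)$ jumps by $\psi(s)\,\Delta A(s)$ whenever the finite-variation process $A$ does --- the paper's assertion that ``therefore $X$ has continuous paths as well'' silently uses continuity of the process $A$, which is not among the stated hypotheses (note the notational clash between the process $A$ and the sets $A\in\calA$). Your resolution --- reading continuity of $A$, or at least $\psi\,\Delta A\equiv 0$, as an implicit standing assumption --- is exactly what the corollary as stated requires; without it, the formula would retain both the jump sum and the left limits $X_{s-}$ inside the $dA$-integral.
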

\begin{proof}
Our assumption on $M$ and Proposition 6.11 in \cite{CCFM:SPDE} shows that the stochastic integral $\int_{0}^{\cdot} \int_{U} \Phi (s,u) M(ds,du)  $ has continuous paths. Therefore $X$ has continuous paths as well.  Hence the last term in the formula of Theorem \ref{theoItoFormulaVectorCase} vanishes.     
\end{proof}

\section{Application to a Burkholder inequality}\label{subSectBurkholder} 

In this section we show how an implementation of our It\^{o} formula by writing a self contained proof of Burkholder type inequalities for stochastic integrals defined with respect to  a cylindrical martingale-valued measure. It is worth noting, that since the integrals involved are radonified, such results can be obtained from the corresponding Burkholder-Davis-Gundy inequalities for Hilbert space valued processes.

For further reference in this section, we explicitly write down our It\^{o} formula for the function $f(x) = \norm{x}_G^p$ for $p > 2$, and $X_t=\int_0^t \!\! \int_U \Phi(r, u)\, M(dr, du) $ where $\Phi \in \Lambda^{2}(T, M)$.

It is known that the first and second derivatives are given by the following linear and bilinear forms, respectively
\begin{align*}
    f_x(x)(g_1) & = p \norm{x}_G^{p-2} \inner{x} {g_1}_G, \\
    f_{xx}(x) (g_1, g_2) & = \begin{cases}
        p(p-2)\norm{x}_G^{p-4} \inner{x \otimes x (g_1)} {g_2}_G + p \norm{x}_G^{p-2} \inner{g_1}{g_2}_G, & x\neq 0, \\
        0, & x = 0.
    \end{cases}
\end{align*}
In this case we have
\begin{align*}
 \left\langle \! \left\langle f_{x}(s, X_{s-}), \Phi(s, u)\right\rangle \! \right\rangle_{G}(h)  = &\  \langle  f_{x}(s, X_{s-}), \Phi(s, u)h \rangle_{G^*,G} \\
  = &\  p \norm{X_{s-}}_G^{p-2} \inner{X_{s-}} {\Phi(s, u)h}_G \\
 = &\  p \norm{X_{s-}}_G^{p-2} \langle \! \langle X_{s-}, \Phi(s, u) \rangle \! \rangle_G(h) \end{align*}
 \begin{align*}
 \mbox{Tr}_{\Phi(s, u) Q_{M^c}^{1 / 2}} \left( f_{x x}(s, X_{s-}) \right) 
 = &\  p(p-2)\norm{X_{s-}}_G^{p-4} \mbox{Tr}_{\Phi(s, u) Q_{M^c}^{1 / 2}}( X_{s-} \otimes  X_{s-}) \\
 & + p \norm{X_{s-}}_G^{p-2} \norm{\Phi(s, u) Q_{M^c}^{1 / 2}}_{\mathcal{HS}(H,G)}^2.
\end{align*}
It\^{o} formula for this example takes the following form

\begin{align}
\norm{X_t}_{G}^p & = p \int_{0}^{t}\!\!\int_U  \norm{X_{s-}}_G^{p-2} \langle \! \langle X_{s-}, \Phi(s, u) \rangle \! \rangle_G M(ds, du)  \label{eqNormPStochInteg} \\
& +\frac{p(p-2)}{2}  \int_{0}^{t} \!\!\int_U \norm{X_{s-}}_G^{p-4} \mbox{Tr}_{\Phi(s, u) Q_{M^c}^{1 / 2}}( X_{s-} \otimes  X_{s-}) \operQuadraVari{M^c}(ds,du) \nonumber \\
& +\frac{p}{2} \int_{0}^{t} \!\!\int_U \norm{X_{s-}}_G^{p-2} \norm{\Phi(s, u) Q_{M^c}^{1 / 2}}_{\mathcal{HS}(H,G)}^2 \operQuadraVari{M^c}(ds,du) \nonumber \\
& +\sum_{0<s \leq t} \Bigl[\norm{X_{s}}^{p}_{G}-\norm{X_{s-}}^{p}_{G}-p\norm{X_{s-}}_{G}^{p-2}\inner{X_{s-}} {\Delta X_{s}}_G 
\Bigr]. \nonumber
\end{align}

As our first application, we establish a Burkholder inequality for the case where $X$ has continuous paths. Our proof is based on ideas from Theorem 2.140 of \cite{Lu_Zhang}, there for the case of It\^{o} process defined with respect to cylindrical Wiener processes.

\begin{theorem}[A Burkholder inequality; continuous case] \label{theoBurkholderContinuousCase}
Assume that for each $A\in\mathcal{A}$ and $h\in H$ the real-valued stochastic process $M(\cdot,A)(h)$ is continuous. Let $T>0$ and  $\Phi \in \Lambda^{2}(T, M)$. Put $X=(X_{t}: 0 \leq t \leq T)$ defined by
$$X_t=\int_0^t \!\! \int_U \Phi(r, u)\, M(dr, du).$$ 
For any $p>0$ and each $0 \leq t \leq T$,      
\begin{equation} \label{eqBurkholderContinuous} 
  \mathbb{E}\left[ \sup_{0 \leq s \leq t} \norm{ X_{s}}_{G}^p\right]  \leq C(p) \mathbb{E}\left[\left( \int_{0}^{t} \!\!\int_U  \norm{ \Phi(r,u)Q_M^{1/2}(r,u)}_{\mathcal{HS}(H,G)}^{2} \operQuadraVari{M}(dr,du) \right)^{p / 2}\right],
\end{equation}
where $C(p) = \left(\frac{p(p-1)}{2}\right)^{p/2} \left(\frac{p}{p-1}\right)^{(p-2)/2}$ for $p \geq 2$, and $C(p) = \frac{4 - p}{2 - p}$ for $0 < p < 2$.
 \end{theorem}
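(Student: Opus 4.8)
The plan is to split into the ranges $p\geq 2$ and $0<p<2$, and to use throughout that the continuity hypothesis forces $M=M^{c}$, so that $Q_{M^{c}}=Q_{M}$ and $\operQuadraVari{M^{c}}=\operQuadraVari{M}$, that $X$ has continuous paths (hence $X_{s-}=X_{s}$ and all jump terms vanish), and that by Theorem \ref{theoQuadraVariaStochIntegral} the process
$$
V_{t}=\int_{0}^{t}\!\!\int_{U}\norm{\Phi(r,u)Q_{M}^{1/2}(r,u)}_{\mathcal{HS}(H,G)}^{2}\operQuadraVari{M}(dr,du)
$$
is precisely the predictable quadratic variation $\quadraVari{X}_{t}$. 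Write $X_{t}^{*}=\sup_{0\leq s\leq t}\norm{X_{s}}_{G}$. The borderline case $p=2$ is immediate: $\norm{X}_{G}$ is a nonnegative submartingale, so Doob's maximal inequality together with the It\^{o} isometry \eqref{eqItoIsometrySimpleIntegrands} gives $\Exp[(X_{t}^{*})^{2}]\leq 4\,\Exp[\norm{X_{t}}_{G}^{2}]=4\,\Exp[V_{t}]$.

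For $p\geq 2$ I would apply the already computed It\^{o} formula \eqref{eqNormPStochInteg} to $f(x)=\norm{x}_{G}^{p}$. In the continuous setting this reads $\norm{X_{t}}_{G}^{p}=N_{t}+R_{t}$, where $N_{t}=p\int_{0}^{t}\!\int_{U}\norm{X_{s}}_{G}^{p-2}\langle\!\langle X_{s},\Phi(s,u)\rangle\!\rangle_{G}\,M(ds,du)$ is a local martingale and $R_{t}$ is the finite-variation part. The decisive pointwise estimate is the Cauchy--Schwarz bound on the trace term,
$$
0\leq \mbox{Tr}_{\Phi Q_{M}^{1/2}}\!\left(X_{s}\otimes X_{s}\right)=\sum_{j}\inner{X_{s}}{\Phi Q_{M}^{1/2}h_{j}}_{G}^{2}\leq \norm{X_{s}}_{G}^{2}\,\norm{\Phi Q_{M}^{1/2}}_{\mathcal{HS}(H,G)}^{2},
$$
which, inserted into \eqref{eqNormPStochInteg}, shows that $R$ is nondecreasing with $dR_{s}\leq \tfrac{p(p-1)}{2}\norm{X_{s}}_{G}^{p-2}\,dV_{s}$. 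To discard $N$ upon taking expectations and to guarantee the a priori finiteness needed below, I would localize by $\sigma_{n}=\inf\{s:\norm{X_{s}}_{G}\geq n\}$; continuity of $X$ gives $X_{t\wedge\sigma_{n}}^{*}\leq n$ and makes $N^{\sigma_{n}}$ a genuine martingale, whence
$$
\Exp\!\left[\norm{X_{t\wedge\sigma_{n}}}_{G}^{p}\right]\leq \tfrac{p(p-1)}{2}\,\Exp\!\left[\int_{0}^{t\wedge\sigma_{n}}\!\norm{X_{s}}_{G}^{p-2}\,dV_{s}\right]\leq \tfrac{p(p-1)}{2}\,\Exp\!\left[(X_{t\wedge\sigma_{n}}^{*})^{p-2}V_{t}\right].
$$

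I would then close the estimate in the classical way. H\"older's inequality with exponents $\tfrac{p}{p-2}$ and $\tfrac{p}{2}$ bounds the right-hand side by $\tfrac{p(p-1)}{2}\big(\Exp[(X_{t\wedge\sigma_{n}}^{*})^{p}]\big)^{(p-2)/p}\big(\Exp[V_{t}^{p/2}]\big)^{2/p}$, while Doob's $L^{p}$ maximal inequality for $\norm{X}_{G}$ gives $\Exp[(X_{t\wedge\sigma_{n}}^{*})^{p}]\leq (\tfrac{p}{p-1})^{p}\Exp[\norm{X_{t\wedge\sigma_{n}}}_{G}^{p}]$. Writing $a_{n}=\Exp[(X_{t\wedge\sigma_{n}}^{*})^{p}]\leq n^{p}<\infty$, chaining these inequalities yields $a_{n}\leq C\,a_{n}^{(p-2)/p}\big(\Exp[V_{t}^{p/2}]\big)^{2/p}$; dividing by the finite factor $a_{n}^{(p-2)/p}$ gives $a_{n}\leq C(p)\,\Exp[V_{t}^{p/2}]$ with $C(p)$ independent of $n$, and letting $n\to\infty$ (so $\sigma_{n}\to\infty$ and $X_{t\wedge\sigma_{n}}^{*}\uparrow X_{t}^{*}$) the monotone convergence theorem yields \eqref{eqBurkholderContinuous}. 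For $0<p<2$ the function $\norm{x}_{G}^{p}$ fails to be $C^{2}$ at the origin, so instead of It\^{o}'s formula I would invoke Lenglart's domination inequality: since $\norm{X_{t}}_{G}^{2}-V_{t}$ is a martingale with $V$ predictable and nondecreasing, the nonnegative submartingale $\norm{X}_{G}^{2}$ is dominated by $V$ in the sense that $\Exp[\norm{X_{\tau}}_{G}^{2}]=\Exp[V_{\tau}]$ for every bounded stopping time $\tau$; Lenglart's inequality with $k=p/2\in(0,1)$ then gives $\Exp[(X_{t}^{*})^{p}]=\Exp\big[(\sup_{s\leq t}\norm{X_{s}}_{G}^{2})^{p/2}\big]\leq \tfrac{2-k}{1-k}\Exp[V_{t}^{p/2}]$, i.e. \eqref{eqBurkholderContinuous} with $C(p)=\tfrac{4-p}{2-p}$.

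I expect the main obstacle to be the rigorous handling of the division step in the range $p\geq 2$: one cannot divide by $a_{n}^{(p-2)/p}$ unless it is finite, and $\Phi\in\Lambda^{2}(T,M)$ only controls second moments, not the $p$-th moments of $X_{t}^{*}$. This is exactly what the localization by $\sigma_{n}$ repairs, crucially using the continuity of $X$ (so the stopped supremum does not overshoot level $n$) and the fact that the constant $C(p)$ produced by the H\"older--Doob chain does not depend on $n$, so that the bound survives the passage to the limit. A secondary point to verify carefully is that $N^{\sigma_{n}}$ is a true martingale (its integrand lies in $\Lambda^{2}(M,T)$ because $\norm{X_{s}}_{G}^{p-1}\leq n^{p-1}$ on $[0,\sigma_{n}]$) and that $R$ is genuinely nondecreasing, which the trace estimate above secures.
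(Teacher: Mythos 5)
Your proof is correct and follows essentially the same route as the paper: for $p>2$ you both use the It\^{o} expansion \eqref{eqNormPStochInteg} with the Cauchy--Schwarz bound on the trace term, a localization making the martingale part genuine and the $p$-th moments finite, then H\"older, Doob, and division by the finite factor; and for $0<p<2$ your appeal to Lenglart's domination inequality (constant $\tfrac{4-p}{2-p}$) is precisely the bound the paper derives inline from the tail estimate of Proposition 6.19 in \cite{CCFM:SPDE} combined with the layer-cake formula, with the identical constant. The only cosmetic difference is organizational: the paper localizes once at the outset (reducing to bounded $X$ and recovering the general case via Fatou), whereas you localize with $\sigma_n$ inside the $p>2$ argument and conclude by monotone convergence, correctly noting that path continuity prevents overshoot of the level $n$.
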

\begin{proof} 
If $p=2$, by the It\^{o} isometry, the result holds true with an equality for $C(p)=1$. Then we must only prove the result for the case $p \neq 2$.

\textbf{Claim:} It suffices to show that \eqref{eqBurkholderContinuous} holds true for $X$ bounded. 

To prove this claim, for each $k \in \N$, put
$$ \tau_{k}= \inf\{ s \in [0,t]: \norm{X_s}_{G} \geq k \},$$
with the convention that the infimum of the empty set is $t$.  

Let $\Phi_{k}=\mathbbm{1}_{[0,\tau_{k}]}\Phi$ and $X_{k}=\int_0^t\!\! \int_U \Phi_{k}(s, u) M(d s, d u)$. Then we have $\norm{X_{k}}_{G} \leq k$. If \eqref{eqBurkholderContinuous} holds true for $X_{k}$ we have
\begin{multline*}
\mathbb{E}\left[ \sup_{0 \leq s \leq t} \norm{ \int_0^s\!\! \int_U \Phi_{k}(r, u) M(d r, d u) }_{G}^p\right] \\ \leq  C(p) \mathbb{E}\left[\left( \int_{0}^{t} \!\!\int_U  \norm{ \Phi_{k}(r,u)Q_M^{1/2}(r,u)}_{\mathcal{HS}(H,G)}^{2} \operQuadraVari{M}(dr,du) \right)^{p / 2}\right] .   
\end{multline*}
Letting $k \rightarrow \infty$ in both sides of the above inequality, by Fatou's lemma we arrive at \eqref{eqBurkholderContinuous}. 

From now on we assume that $X$ is bounded. \medskip

\textbf{Case $p>2$:} By \eqref{eqNormPStochInteg} and since $X$ has continuous paths we have 
\begin{align*}
\norm{X_t}_{G}^p & = p \int_{0}^{t}\!\!\int_U  \norm{X_s}_G^{p-2} \langle \! \langle X_s, \Phi(s, u) \rangle \! \rangle_G M(ds, du)  \\
& +\frac{p(p-2)}{2}  \int_{0}^{t} \!\!\int_U \norm{X_s}_G^{p-4} \mbox{Tr}_{\Phi(s, u) Q_{M}^{1 / 2}}( X_s \otimes  X_s) \operQuadraVari{M}(ds,du)  \\
& +\frac{p}{2} \int_{0}^{t} \!\!\int_U \norm{X_s}_G^{p-2} \norm{\Phi(s, u) Q_{M}^{1 / 2}}_{\mathcal{HS}(H,G)}^2 \operQuadraVari{M}(ds,du) .
\end{align*}
However, since 
$$
\mbox{Tr}_{\Phi(s, u) Q_{M}^{1 / 2}}( X_s \otimes  X_s)
\leq \norm{\Phi(s, u) Q_{M}^{1 / 2}}_{\mathcal{HS}(H,G)}^2  \norm{X_s}^2,
$$
we arrive at the inequality:
\begin{align*}
\norm{X_t}_{G}^p & \leq p \int_{0}^{t}\!\!\int_U \norm{X_s}_G^{p-2} \langle \! \langle X_s, \Phi(s, u) \rangle \! \rangle_G M(ds, du)  \\
&+\frac{p(p-1)}{2} \int_{0}^{t} \!\!\int_U \norm{X_s}_G^{p-2} \norm{\Phi(s, u) Q_{M}^{1 / 2}}_{\mathcal{HS}(H,G)}^2 \operQuadraVari{M}(ds,du).
\end{align*}
Our boundedness assumption allows us to conclude that $$I_{t}=\int_{0}^{t}\!\!\int_U \norm{X_s}_G^{p-2} \langle \! \langle X_s, \Phi(s, u) \rangle \! \rangle_G \, M(ds, du)$$
is a $G$-valued continuous mean-zero  square integrable martingale. Thus, taking expectations we obtain
\begin{eqnarray*}
 \mathbb{E}\left[\|X_t\|^p_{G}\right] 
& \leq & \frac{p(p-1)}{2} \mathbb{E} \int_{0}^{t} \!\!\int_U \norm{X_s}_G^{p-2} \norm{\Phi(s, u) Q_{M}^{1 / 2}}_{\mathcal{HS}(H,G)}^2 \operQuadraVari{M}(ds,du) \\    
& \leq & \frac{p(p-1)}{2} \mathbb{E}\left( \sup _{0 \leq s \leq t}\|X_s\|_G^{p-2}\langle I\rangle_t\right).
\end{eqnarray*}
By H\"{o}lder's inequality and Doob's martingale inequality, we obtain 
\begin{eqnarray*}
 \mathbb{E}\left[\sup_{0 \leq s \leq t}\|X_s\|_G^{p-2}\langle I\rangle_t\right] 
& \leq & \left( \mathbb{E}\left[ \sup _{0 \leq s \leq t} \|X_s\|_{G}^{p} \right] \right)^{(p-2) / p}  \left( \mathbb{E}\left[\langle I\rangle_t^{p / 2}\right] \right)^{2 / p} \\
& \leq & \left(\frac{p}{p-1}\right)^{(p-2) / p}\left(\mathbb{E}\left[\|X_t\|_{G}^{p} \right]\right)^{(p-2) / p}\left(\mathbb{E}\left[\langle I\rangle_t^{p / 2}\right]\right)^{2 / p}.
\end{eqnarray*}

Let $D(p)=\frac{p(p-1)}{2} \cdot\left(\frac{p}{p-1}\right)^{(p-2)/p}$, then we have
$$
\mathbb{E}\left[\|X_t\|_{G}^{p} \right] \leq D(p)\left[\mathbb{E}\left(\|X_t\|_{G}^p\right)\right]^{1-\left(\frac{2}{p}\right)}\left(\mathbb{E}\left[ \langle I\rangle_t^{p / 2}\right]\right)^{2 / p}.  
$$
Therefore, by another use of Doob's inequality we obtain
$$
\mathbb{E}\left[ \sup _{0 \leq s \leq t} \|X_s\|_{G}^{p} \right] \leq \mathbb{E}\left[\|X_t\|_{G}^{p} \right]  \leq D(p)^{p / 2} \mathbb{E}\left[ \langle I\rangle_t^{p / 2}\right].
$$
Then we arrive at \eqref{eqBurkholderContinuous} with $C(p)=D(p)^{p / 2}$.\medskip

\textbf{Case $0<p<2$:}
The classical distribution identity for the expectation gives us
$$
\Exp\left[ \sup_{0 \leq s \leq t} \norm{X_s}_G^p \right] = p \int_0^{\infty} \lambda^{p-1} \Prob \left( \sup_{0 \leq s \leq t} \norm{X_s}_G^p > \lambda \right) \, d\lambda.
$$
By Proposition 6.19 in \cite{CCFM:SPDE} (or rather its proof), with $a = \lambda$ and $b = \lambda^2$, we have
\begin{equation*}
\Prob \left( \sup_{0 \leq s \leq t} \norm{X_s}_{G} > \lambda \right) \leq \frac{1}{\lambda^{2}} \min\left( \lambda^2, \Exp[\langle I\rangle_t]\right) +  \Prob \left(  \langle I\rangle_t > \lambda^2 \right).   
\end{equation*}
Then we have
\begin{align*}
  \Exp\left[ \sup_{0 \leq s \leq t} \norm{X_s}_G^p \right] &\leq p \int_0^{\infty} \lambda^{p-3} \min\left( \lambda^2, \Exp[\langle I\rangle_t]\right) \, d\lambda + p \, \int_0^{\infty} \lambda^{p-1} \Prob \left(  \langle I\rangle_t > \lambda^2 \right) d\lambda \\
& = I_1 + I_2.  
\end{align*}

For $I_2$, we obtain the desired estimate, since by a simple change of variable we get
$$
I_2 = \Exp[\langle I\rangle_t^{p/2}] = \Exp\left[ \left( \int_0^t \!\! \int_U \| \Phi(s,u)\circ Q_M^{1/2}\|^2_{\mathcal{HS}(H,G)} \, \operQuadraVari{M}(ds, du)  \right)^{p/2} \right].
$$
For $I_1$, by considering the cases where $\lambda^2 \leq \langle I\rangle_t$ and $\lambda^2 > \langle I\rangle_t$ separatedly we obtain
\begin{align*}
    I_1 & = p \Exp\left[   \int_0^{\infty} \lambda^{p-3} \min(\lambda^2, \langle I\rangle_t)\, d\lambda \right] = p \Exp\left[ \int_0^{\sqrt{\langle I\rangle_t}} \lambda^{p-1} \, d\lambda \right] + p \Exp\left[ \int_{\sqrt{\langle I\rangle_t}}^{\infty} \lambda^{p-3} \langle I\rangle_t \, d\lambda \right] \\
    & = \Exp[ \langle I\rangle_t^{p/2}] + \frac{p}{2-p}\Exp[\langle I\rangle_t^{p/2}] = \frac{2}{2-p} \Exp\left[ \langle I\rangle_t^{p/2} \right],
\end{align*}
which is the desired inequality.
\end{proof}

Now we move on the case where the paths of the cylindrical martingale-valued measure $M$ are purely discontinuous. In this case we base our argument on proof of Theorem 2.3 of \cite{ZhuBrzezniakLiu:2019}, there for the case of Poisson random measures.

\begin{theorem}[Burkholder's inequality; purely discontinuous case] \label{theoBurkholderDiscontinuousCase}
Assume that for each $A\in\mathcal{A}$ and $h\in H$ the real-valued stochastic process $M(\cdot,A)(h)$ is purely discontinuous.  Let $T>0$ and  $\Phi \in \Lambda^{2}(T, M)$. Put $X=(X_{t}: 0 \leq t \leq T)$ defined by 
$$X_t=\int_0^t \!\! \int_U \Phi(r, u)\, M(dr, du).$$ 
Then for any $p >0$ and for each $0 \leq t \leq T$,      
\begin{equation}\label{eqBurkholderPurelyDiscontinuous}
\mathbb{E}\left[ \sup_{0 \leq s \leq t} \norm{ X_{s} }_{G}^p\right] \leq  C(p) \Exp \left(  [X]^{p/2}_{t}  \right)  =C(p)  \Exp \left[ \left(   \sum_{0<s \leq t} \norm{\Delta X_{s}}^{2}_G \right)^{p/2} \right],   
\end{equation} 
where $C(p) = \frac{4 - p}{2-p}$, for $0<p<2$, $C(p)=1$ for $p = 2$ and $C(p) = (2p(p-1))^{p/2} \left( \frac{p}{p-1} \right)^{p^{2}/2}$ for $p > 2$.
\end{theorem}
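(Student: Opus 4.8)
The plan is to split into the three regimes $p=2$, $p>2$, and $0<p<2$, exploiting that purely discontinuous paths force $M^{c}=0$, hence $\operQuadraVari{M^{c}}\equiv 0$ and, by \eqref{eqOptionalQuadraticVariationIntegral}, $[X]_t=\sum_{0<s\leq t}\norm{\Delta X_s}_G^2$. We may assume $\Exp\big[[X]_t^{p/2}\big]<\infty$, since otherwise there is nothing to prove. For $p=2$ the It\^o isometry together with the fact that $\norm{X}_G^2-[X]$ is a martingale gives $\Exp[\norm{X_t}_G^2]=\Exp[[X]_t]$, and Doob's $L^2$ inequality yields \eqref{eqBurkholderPurelyDiscontinuous} with $C(2)=4$.

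For $p>2$ I would specialize the It\^o formula \eqref{eqNormPStochInteg}: since the two $\operQuadraVari{M^{c}}$-integrals vanish, only the stochastic integral and the jump series $\sum_{0<s\leq t}\big[\norm{X_s}_G^p-\norm{X_{s-}}_G^p-p\norm{X_{s-}}_G^{p-2}\inner{X_{s-}}{\Delta X_s}_G\big]$ survive. The elementary convexity estimate
\begin{equation*}
0\leq \norm{x+y}_G^p-\norm{x}_G^p-p\norm{x}_G^{p-2}\inner{x}{y}_G\leq C_p\big(\norm{x}_G^{p-2}\norm{y}_G^2+\norm{y}_G^p\big),
\end{equation*}
obtained from Taylor's formula \eqref{eqFormulaFor-h} and the bound $f_{xx}(z)(y,y)\leq p(p-1)\norm{z}_G^{p-2}\norm{y}_G^2$, bounds each jump term. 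Summing and using $\sum_{0<s\leq t}\norm{\Delta X_s}_G^2=[X]_t$ together with $\sum_{0<s\leq t}\norm{\Delta X_s}_G^p\leq [X]_t^{p/2}$ (valid because $t\mapsto t^{p/2}$ is superadditive for $p\geq 2$) controls the series by $C_p\big((X_t^*)^{p-2}[X]_t+[X]_t^{p/2}\big)$, where $X_t^*=\sup_{s\leq t}\norm{X_s}_G$. Taking expectations annihilates the stochastic integral (its integrand is built from the left limit $X_{s-}$, which is bounded on the localized picture below, so it is a genuine mean-zero martingale), and H\"older with exponents $\tfrac{p}{p-2},\tfrac p2$ followed by Doob's inequality gives $\Exp[\norm{X_t}_G^p]\leq C_p\big(\Exp[(X_t^*)^p]\big)^{(p-2)/p}\big(\Exp[[X]_t^{p/2}]\big)^{2/p}+C_p\Exp[[X]_t^{p/2}]$ with $\Exp[(X_t^*)^p]\leq(\tfrac{p}{p-1})^p\Exp[\norm{X_t}_G^p]$. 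Young's inequality then absorbs the $\Exp[\norm{X_t}_G^p]$ term and, by one more use of Doob, produces \eqref{eqBurkholderPurelyDiscontinuous}.

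The absorption step requires $\Exp[\norm{X_t}_G^p]<\infty$ a priori, and here lies the one genuine subtlety of the discontinuous setting: unlike the continuous case, stopping at $\tau_k=\inf\{s:\norm{X_s}_G\geq k\}$ does \emph{not} make $X$ bounded, because the jump at $\tau_k$ may overshoot $k$. I would instead localize by $\sigma_n=\inf\{s:\norm{X_s}_G\geq n\}$ and note that $\sup_{s\leq t\wedge\sigma_n}\norm{X_s}_G\leq n+\sup_{s\leq t}\norm{\Delta X_s}_G\leq n+[X]_t^{1/2}$, so $\Exp[\norm{X_{t\wedge\sigma_n}}_G^p]<\infty$ under our standing assumption; moreover for $s\leq\sigma_n$ the left limit obeys $\norm{X_{s-}}_G\leq n$, so the stochastic integral for the stopped process is a true martingale. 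Running the recursion for $X^{\sigma_n}$ gives a bound independent of $n$, and since $\sigma_n\to\infty$ (each path is bounded on $[0,T]$) Fatou's lemma removes the localization.

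For $0<p<2$ I would bypass It\^o's formula entirely and invoke Lenglart's domination inequality: with $Y_s=\norm{X_s}_G^2$ and the nondecreasing adapted process $A_s=[X]_s$, the identity $\Exp[\norm{X_\tau}_G^2]=\Exp[[X]_\tau]$ for bounded stopping times $\tau$ (optional stopping applied to the martingale $\norm{X}_G^2-[X]$) furnishes the required domination, and Lenglart's inequality with exponent $k=p/2\in(0,1)$ gives $\Exp\big[(\sup_{s\leq t}\norm{X_s}_G^2)^{p/2}\big]\leq C_{p/2}\,\Exp[[X]_t^{p/2}]$, which is exactly \eqref{eqBurkholderPurelyDiscontinuous}. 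I expect the main obstacle to be the $p>2$ case, specifically the careful localization just described together with the verification that the stochastic integral term is a genuine mean-zero martingale; the convexity estimate and the algebraic bookkeeping of the jump series are otherwise routine.
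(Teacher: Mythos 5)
Your treatment of $p=2$ and $p>2$ follows essentially the same route as the paper's own proof: the paper likewise specializes It\^o's formula \eqref{eqNormPStochInteg} (the $\operQuadraVari{M^{c}}$-integrals vanish since $M^{c}=0$), takes expectations to annihilate the stochastic integral, bounds the jump series --- via the mean value theorem and local H\"older continuity of the derivative of $x\mapsto \norm{x}_G^{p}$, giving the single-term bound $D_p\bigl(\sup_{s\le t}\norm{X_s}_G\bigr)^{p-2}\norm{\Delta X_s}_G^{2}$, whereas you carry the extra term $\norm{\Delta X_s}_G^{p}$, harmlessly absorbed by $\sum_s\norm{\Delta X_s}_G^{p}\le [X]_t^{p/2}$ --- and closes with H\"older, Doob, and absorption. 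One point where you are actually \emph{more} careful than the paper: the paper's reduction imports the continuous-case stopping argument verbatim, asserting $\norm{X_k}_G\le k$ after stopping at $\tau_k$, which fails in the presence of jumps because of overshoot. Your localization $\sigma_n$ together with $\sup_{s\le t\wedge\sigma_n}\norm{X_s}_G\le n+[X]_t^{1/2}$, the remark that $\norm{X_{s-}}_G\le n$ for $s\le\sigma_n$ (so the stochastic integral term is a genuine mean-zero martingale), and the standing reduction $\Exp\bigl[[X]_t^{p/2}\bigr]<\infty$ supply exactly the a priori integrability the absorption step needs; this is the correct repair of a step the paper glosses over.

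For $0<p<2$, however, your argument has a genuine gap. Lenglart's moment inequality $\Exp\bigl[(\sup_{s\le t}Y_s)^{k}\bigr]\le c_k\,\Exp\bigl[A_t^{k}\bigr]$ for $k\in(0,1)$ requires the dominating increasing process $A$ to be \emph{predictable}; $[X]$ is adapted and increasing but not predictable (its jumps $\norm{\Delta X_s}_G^{2}$ are not announced), and predictability is not a technicality here --- it is precisely what controls the overshoot $\Delta A_\tau$ at the first level-crossing time in the proof of the tail estimate. The step is not repairable for $0<p<1$: taking $H=G=\R$, $U$ a point, $\Phi\equiv 1$ and $M$ a compensated Poisson process of small rate $\epsilon$, one has $\Exp\bigl[\sup_{s\le 1}\abs{X_s}^{p}\bigr]\gtrsim \epsilon^{p}$ while $\Exp\bigl[[X]_1^{p/2}\bigr]=\Exp\bigl[N_1^{p/2}\bigr]\asymp\epsilon$, so no constant $C(p)$ can exist when $p<1$; domination by the \emph{optional} bracket genuinely fails below $p=1$ (for $1\le p<2$ your route could instead be salvaged by Davis/BDG-type arguments, but not by Lenglart with $[X]$). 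The paper avoids your step: its $0<p<2$ case reuses the continuous-case distribution-function computation, which is a Lenglart-type argument run with the \emph{predictable} bracket $\quadraVari{I}_t=\int_0^t\!\int_U\norm{\Phi Q_M^{1/2}}_{\mathcal{HS}(H,G)}^{2}\operQuadraVari{M}(ds,du)$ and therefore yields a bound by $\Exp\bigl[\quadraVari{I}_t^{p/2}\bigr]$ rather than by $\Exp\bigl[[X]_t^{p/2}\bigr]$; the identification of these two right-hand sides when $p/2<1$ is itself delicate in the purely discontinuous setting (the same rare-jump example obstructs it), so the optional-bracket formulation should in any case be restricted, or restated with the predictable bracket, for $0<p<1$.
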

\begin{proof} 
As in the proof of Theorem \ref{theoBurkholderContinuousCase}, it suffices to show the result for $p \neq 2 $ and under the assumption that   \eqref{eqBurkholderPurelyDiscontinuous} holds true for $X$ bounded. Moreover, the proof for $0 < p <2$ in Theorem \ref{theoBurkholderContinuousCase} remains valid in this case. Hence, we only need to check the case $p>2$. 

For $p>2$, by \eqref{eqNormPStochInteg} we have
\begin{align}
\norm{X_t}_{G}^p & = p \int_{0}^{t}\!\!\int_U  \norm{X_{s-}}_G^{p-2} \langle \! \langle X_{s-}, \Phi(s, u) \rangle \! \rangle_G M(ds, du)  \nonumber \\
& +\sum_{0<s \leq t} \Bigl[\norm{X_{s}}^{p}_{G}-\norm{X_{s-}}^{p}_{G}-p\norm{X_{s-}}_{G}^{p-2}\inner{X_{s-}} {\Delta X_{s}}_G 
\Bigr]. \nonumber\\
& = I_{1}(t)+I_{2}(t). \nonumber
\end{align}
Since $X$ is a $G$-valued martingale and $I_{1}$ has zero mean, by Doob's inequality we have
\begin{equation}\label{eqIneqSupOrderPBirkholderDiscontinuous}
\Exp\left[ \sup_{0 \leq s \leq t} \norm{X_{s}}_G^p \right] \leq \left( \frac{p}{p-1} \right)^{p} \Exp \left[ \norm{X_{t}}_G^p \right]   = \left( \frac{p}{p-1} \right)^{p} \Exp \left[  I_{2}(t) \right]  .
\end{equation}
Now, as a consequence of  the mean value theorem (see e.g. Corollary 12.2.9 in \cite{BogachevSmolyanov}) we have
\begin{multline}\label{eqInequaMeanValueBurkholder}
\abs{\norm{X_{s}}^{p}_{G}-\norm{X_{s-}}^{p}_{G}-p\norm{X_{s-}}_{G}^{p-2}\inner{X_{s-}} {\Delta X_{s}}_G } \\
\leq \sup_{\theta \in (0,1)} \norm{p\norm{X_{s-}+\theta \Delta X_{s}}_{G}^{p-2}\inner{X_{s-}} {\cdot }_G - p\norm{X_{s-}}_{G}^{p-2}\inner{X_{s-}} {\cdot}_{G}}_{\mathcal{L}(G,\R)} \norm {\Delta X_{s}}_G     
\end{multline}
Observe that for all $0 \leq s \leq t$, 
$$ \norm{X_{s-}}_{G} \leq \sup_{0 \leq s \leq t} \norm{X_{s-}}_{G} \leq \sup_{0 \leq s \leq t} \norm{X_{s}}_{G}. $$
Also, since $X_{s} =X_{s-}+\Delta X_{s}$, we get 
$$ \norm{X_{s-}+\Delta X_{s} }_{G} \leq \sup_{0 \leq s \leq t} \norm{X_{s}}_{G}. 
 $$
By using the inequality $\norm{x+\theta y}_{G} \leq \max\{\norm{x}_{G}, \norm{x+y}_{G}\}$ for $0 < \theta < 1$ and $x,y \in G$, we have 
$$ \norm{X_{s-}+\theta \Delta X_{s}}_{G} \leq \max \left\{ \norm{X_{s-}}_{G}, \norm{X_{s-}+\Delta X_{s} }_{G} \right\} \leq \sup_{0 \leq s \leq t} \norm{X_{s}}_{G}.  $$
On the other hand, since the Hilbert space $G$ is 2-smooth, the (Fr\'echet) derivative of $\psi_{p}(x)\defeq \norm{x}^{p}_{G}$ is locally H\"{o}lder continuous on $G$, that is, 
$$ \norm{\psi'_{p}(x)-\psi'_{p}(y)}_{\mathcal{L}(G,\R)} \leq D_{p} \left( \norm{x}_{G} +\norm{y}_{G} \right)^{p-2} \norm{x-y}_{G}, \quad \forall x,y \in G, $$
where $D_{p}= p (p-1)$ (see for instance the proof of Lemma 2.1 in \cite{vanNeervenZhu:2011}). Then, we have
\begin{flalign*}
& \norm{p\norm{X_{s-}+\theta \Delta X_{s}}_{G}^{p-2}\inner{X_{s-}} {\cdot }_G - p\norm{X_{s-}}_{G}^{p-2}\inner{X_{s-}} {\cdot}_{G}}_{\mathcal{L}(G,\R)}   \\
& \leq D_{p} \left( \norm{X_{s-}+\theta \Delta X_{s}}_{G} + \norm{X_{s-}}_{G}  \right)^{p-2} \norm{\Delta X_{s}}_G \\
& \leq 2 D_{p} \left( \sup_{0 \leq s \leq t} \norm{X_{s}}_{G}^{p-2} \right) \norm{\Delta X_{s}}_G  
\end{flalign*}
Thus, by  \eqref{eqInequaMeanValueBurkholder} we get 
$$ \abs{\norm{X_{s}}^{p}_{G}-\norm{X_{s-}}^{p}_{G}-p\norm{X_{s-}}_{G}^{p-2}\inner{X_{s-}} {\Delta X_{s}}_G } 
\leq D_{p} \sup_{0 \leq s \leq t} \norm{X_{s}}_{G}^{p-2} \norm{\Delta X_{s}}^{2}_G  $$
Hence, by the above calculations, \eqref{eqOptionalQuadraticVariationIntegral} and H\"{o}lder inequality, we have
\begin{eqnarray*}
 \Exp \left[  I_{2}(t) \right]   
 & \leq & \Exp \left[  \sum_{0<s \leq t} \abs{ \norm{X_{s}}^{p}_{G}-\norm{X_{s-}}^{p}_{G}-p\norm{X_{s-}}_{G}^{p-2}\inner{X_{s-}} {\Delta X_{s}}_G } \right]   \\
 & \leq & 2 D_{p} \Exp \left[ \left( \sup_{0 \leq s \leq t} \norm{X_{s}}_{G}^{p-2} \right) \cdot  \sum_{0<s \leq t} \norm{\Delta X_{s}}^{2}_G  \right] \\
 & \leq & 2 D_{p} \left(  \Exp \left[ \sup_{0 \leq s \leq t} \norm{X_{s}}_{G}^{p} \right]\right)^{(p-2)/p} \left( \Exp \left[  [X]_{t}^{p/2}  \right] \right)^{2/p} 
\end{eqnarray*}
Thus, by \eqref{eqIneqSupOrderPBirkholderDiscontinuous} it follows that 
$$
\Exp\left[ \sup_{0 \leq s \leq t} \norm{X_{s}}_G^p \right]  \leq 2 D_{p} \left( \frac{p}{p-1} \right)^{p} \left( \Exp \left[ \sup_{0 \leq s \leq t} \norm{X_{s}}_{G}^{p} \right]\right)^{1-2/p} \left( \Exp \left[  [X]_{t}^{p/2}  \right] \right)^{2/p}.
$$
and we arrive at \eqref{eqBurkholderPurelyDiscontinuous} with $C(p)=2^{p/2} D_{p}^{p/2} \left( \frac{p}{p-1} \right)^{p^{2}/2} = (2p(p-1))^{p/2} \left( \frac{p}{p-1} \right)^{p^{2}/2}$. 
\end{proof}

We are ready to prove Burkholder's inequality for a (general) orthogonal cylindrical martingale-valued measure $M$. 

\begin{theorem}[Burkholder's inequality] \label{theoBurkholdersInequalityGeneral}
Let $T>0$ and  $\Phi \in \Lambda^{2}(T, M)$. Let $X=(X_{t}: 0 \leq t \leq T)$ be defined by 
$$X_t=\int_0^t \!\! \int_U \Phi(r, u)\, M(dr, du).$$ 
Then for any $p >0$ there exists $C(p)>0$ such that for each $0 \leq t \leq T$,      
\begin{eqnarray*}
\mathbb{E}\left[ \sup_{0 \leq s \leq t} \norm{ X_{s}}_{G}^p\right] 
& \leq &  C(p) \mathbb{E}\left[\left( \int_{0}^{t} \!\!\int_U  \norm{ \Phi(r,u)Q_{M^{c}}^{1/2}(r,u)}_{\mathcal{HS}(H,G)}^{2} \operQuadraVari{M^{c}}(dr,du) \right)^{p / 2}\right] \\
& + & C(p)  \Exp \left[ \left(   \sum_{0<s \leq t} \norm{\Delta X_{s}}^{2}_G \right)^{p/2} \right] 
\end{eqnarray*}
\end{theorem}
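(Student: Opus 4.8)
The plan is to exploit the orthogonal decomposition $M = M^{c} + M^{d}$ together with the two special cases already established, reducing the general inequality to an application of Theorems \ref{theoBurkholderContinuousCase} and \ref{theoBurkholderDiscontinuousCase}. First I would split the integral process: by \eqref{eqDecompStochIntegral} we may write $X_t = X_t^{c} + X_t^{d}$, where
$$
X_t^{c} = \int_0^t \!\!\int_U \Phi(s,u)\, M^{c}(ds,du), \qquad X_t^{d} = \int_0^t\!\!\int_U \Phi(s,u)\, M^{d}(ds,du),
$$
and Lemma \ref{lemmaNormIntegradsMisASumMcAndMd} guarantees $\Phi \in \Lambda^{2}(M^{c},T) \cap \Lambda^{2}(M^{d},T)$, so both integrals are well defined. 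By construction $M^{c}(\cdot,A)(h)$ is continuous and $M^{d}(\cdot,A)(h)$ is purely discontinuous for every $A \in \calA$ and $h \in H$; hence $X^{c}$ has continuous paths and $X^{d}$ has purely discontinuous paths.

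Next I would reduce the supremum of $\norm{X}_G$ to those of the two parts. From $\norm{X_s}_G \leq \norm{X_s^{c}}_G + \norm{X_s^{d}}_G$ and the elementary inequality $(a+b)^p \leq c_p(a^p + b^p)$ with $c_p = \max\{1,\, 2^{p-1}\}$, valid for all $p>0$, I obtain
$$
\Exp\left[\sup_{0 \leq s \leq t}\norm{X_s}_G^p\right] \leq c_p\left(\Exp\left[\sup_{0 \leq s \leq t}\norm{X_s^{c}}_G^p\right] + \Exp\left[\sup_{0 \leq s \leq t}\norm{X_s^{d}}_G^p\right]\right).
$$

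Then I would apply the two special cases directly. Theorem \ref{theoBurkholderContinuousCase}, applied to $X^{c}$ (whose driving measure $M^{c}$ has continuous paths), bounds the first expectation by a constant times $\Exp[(\int_0^t\int_U \norm{\Phi Q_{M^{c}}^{1/2}}_{\mathcal{HS}(H,G)}^2 \operQuadraVari{M^{c}}(dr,du))^{p/2}]$, which is precisely the first term on the right-hand side of the claimed inequality. Theorem \ref{theoBurkholderDiscontinuousCase}, applied to $X^{d}$, bounds the second expectation by a constant times $\Exp[(\sum_{0<s\leq t}\norm{\Delta X_s^{d}}_G^2)^{p/2}]$. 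Since $X^{c}$ is continuous, we have $\Delta X_s = \Delta X_s^{c} + \Delta X_s^{d} = \Delta X_s^{d}$ for every $s$, so this coincides with the jump term in the statement. Combining the estimates and absorbing $c_p$ and the larger of the two constants into a single $C(p)$ yields the result.

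There is no genuine analytical obstacle here beyond bookkeeping; the two points demanding care are the identification $\Delta X_s = \Delta X_s^{d}$, which rests on the path-continuity of $X^{c}$, and the verification that the hypotheses of the two cited theorems (orthogonality of $M^{c}$ and $M^{d}$ and integrability of $\Phi$ with respect to each) are met. Both are supplied by the standing Assumptions of Section \ref{subSecContAndDisconCMVM} and by Lemma \ref{lemmaNormIntegradsMisASumMcAndMd}, so the argument reduces to assembling these ingredients.
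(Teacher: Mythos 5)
Your proposal is correct and takes essentially the same route as the paper's own proof: decompose the integral via \eqref{eqDecompStochIntegral}, apply $(a+b)^p \leq \max\{2^{p-1},1\}(a^p+b^p)$, and invoke Theorems \ref{theoBurkholderContinuousCase} and \ref{theoBurkholderDiscontinuousCase} for the continuous and purely discontinuous parts. You additionally make explicit two details the paper leaves implicit, namely the identification $\Delta X_s = \Delta X_s^{d}$ (valid since $X^{c}$ has continuous paths) and the integrability of $\Phi$ with respect to $M^{c}$ and $M^{d}$ supplied by Lemma \ref{lemmaNormIntegradsMisASumMcAndMd}.
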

\begin{proof}
By \eqref{eqDecompStochIntegral}, and using the inequality $(a + b)^p \leq \max\{ 2^{p-1}, 1 \}(a^p + b^p)$, for $a,b \geq 0$ and $p > 0$, we have
\begin{flalign*}
& \mathbb{E}\left[\norm{ \int_0^t\!\! \int_U \Phi(s, u) M(d s, d u) }_{G}^p\right] \\ & \leq C(p) \left\{ \mathbb{E}\left[\norm{ \int_0^t \!\! \int_U \Phi(s, u)\, \, M^{c}(ds, du) }_{G}^p\right] +  \mathbb{E}\left[\norm{  \int_0^t \!\! \int_U \Phi(s, u) \,  \, M^{d}(ds, du) }_{G}^p\right]
\right\} 
\end{flalign*}
The result is now a consequence of Theorems \ref{theoBurkholderContinuousCase} and \ref{theoBurkholderDiscontinuousCase}. 
\end{proof}

\begin{remark}
As in Theorem \ref{theoBurkholderDiscontinuousCase}, by Equation \eqref{eqOptionalQuadraticVariationIntegral}, the right hand side of the inequality in Theorem \ref{theoBurkholderContinuousCase} coincides with $\Exp \left(  [X]^{p/2}_{t}  \right)$.  By using the relation $a^p + b^p \leq \max\{ 2^{1-p} , 1 \}(a + b)^p$, for $a, b \geq 0$ and $p >0$, we can write Burkholder's inequality in the form
\begin{equation}
\label{Burkholdergeneralintermsofoptionalvariation}
    \mathbb{E}\left( \sup_{0 \leq s \leq t} \norm{ X_{s}}_{G}^p\right) \leq C(p)\Exp \left(  [X]^{p/2}_{t}  \right)
    \end{equation}
\end{remark}

\begin{example}
	\label{examHvaluedLevy}
Let $L=(L_{t}: t \geq 0)$ be an $H$-valued c\`{a}dl\`{a}g L\'{e}vy process with  L\'{e}vy-It\^{o} decomposition
\begin{equation}\label{eqLevyItoDecomp}
L_{t}= t \xi +W_{t}+\int_{\norm{h} <1} \, h \,  \widetilde{N}(t,dh)+ \int_{\norm{h} \geq 1} \, h \, N(t,dh). 
\end{equation}
Here $N$ is the Poisson random measure associated to $L$ with corresponding L\'{e}vy measure $\lambda$ and $\tilde{N}$ the corresponding compensated Poisson random measure. Let $Q$ denote the positive trace class operator corresponding to the covariance operator of the Wiener process $W=(W_{t}:t \geq 0)$. 

Let $U \in \mathcal{B}(H)$ be such that $0 \in U$ and $\int_{U} \, \norm{u}^{2} \lambda(du)< \infty$. Take $ \mathcal{A}= \{ A \subseteq U : A - \{0\} \textup{ is bounded below}\}$ and let $M=(M(t,A): t \geq 0, A \in \mathcal{A})$ be given by
\begin{equation} \label{levyMartValuedMeasExam} 
M(t,A) = W_{t} \delta_{0}(A) + \int_{A \backslash \{0 \}} \,  u \, \widetilde{N}(t,du), \quad \forall \, t \geq 0, \, A \in \mathcal{A}. 
\end{equation}
We know by Example 5.24 in \cite{CCFM:SPDE} that $M$ is an orthogonal cylindrical martingale-valued measure with quadratic variation given by 
\begin{equation*}\label{eqQuadraticVariationLevyCMVM}
\operQuadraVari{M}((s,t] \times A) = (t-s) \left[ \norm{Q} \delta_{0}(A) + \int_{A \backslash \{0\}} \, \norm{u}^{2} \, \lambda(du) \right].  
\end{equation*}
Moreover, by Example 6.11  in \cite{CCFM:SPDE} the square integrability condition for integrands with respect to $M$ is given by 
\begin{equation}\label{eqSquaIntegrIntegrandLevyCMVM}
 \norm{\Phi}^2_{\Lambda^2(M,T)} = \Exp \int_{0}^{T}  \| \Phi(s,0)\circ Q^{1/2}\|^2_{\mathcal{HS}(H,G)} \, ds  + \Exp \int_{0}^{T}\!\! \int_{U} \| \Phi(s,u) u \|^2_{G} \, \lambda(du) ds< \infty.    
\end{equation}

Now, by the independence of $N$ and $W$, we can easily check that the continuous and purely discontinuous parts of $M$ are given by 
$$ M^{c}(t,A) = W_{t} \delta_{0}(A), \quad  M^{d}(t,A) = \int_{A \backslash \{0 \}} \,  u \, \widetilde{N}(t,du),$$
with quadratic variations
$$ \operQuadraVari{M^{c}}((s,t] \times A) = (t-s)  \norm{Q} \delta_{0}(A), \quad \operQuadraVari{M^{d}}((s,t] \times A)   = (t-s)\int_{A \backslash \{0\}} \, \norm{u}^{2} \, \lambda(du).   $$
Moreover, we have
$$ Q_{M^{c}}(\omega, r, u)h = \frac{Q h}{\norm{Q}} \mathbbm{1}_{\{0\}}(u), \quad Q_{M^{d}}(\omega, r, u)h = \frac{(u,h)_{H} \,  u}{\norm{u}^2}  \mathbbm{1}_{U \setminus \{0\}}(u), $$
Now let  $ \displaystyle X_t=\int_0^t \!\! \int_U \Phi(r, u)\, M(dr, du)$ for $\Phi$ satisfying \eqref{eqSquaIntegrIntegrandLevyCMVM}. Observe that by the definition of $M$ in \eqref{levyMartValuedMeasExam} and by Proposition 6.16 in \cite{CCFM:SPDE} we have $\Delta X_{s}(\omega) = \Phi(\omega, s, \Delta L_{s}) \Delta L_{s}$. Hence, 
$$  \sum_{0<s \leq t} \norm{\Delta X_{s}(\omega)}^{2}_G  = \int_{0}^{t}\!\! \int_{U \setminus \{0\} } \, \norm{ \Phi(\omega, s, u) u}_{G}^{2} N(ds,du). $$
This way, for $p \geq 2$ we have by Theorem \ref{theoBurkholdersInequalityGeneral},
\begin{eqnarray*}
\mathbb{E}\left[ \sup_{0 \leq s \leq t} \norm{ X_{s}}_{G}^p\right] 
& \leq &  C(p) \mathbb{E}\left[\left( \int_{0}^{t}  \| \Phi(s,0)\circ Q^{1/2}\|^2_{\mathcal{HS}(H,G)} \, ds  \right)^{p / 2}\right] \\
& + & C(p)  \Exp \left[ \left(  \int_{0}^{t} \!\! \int_{U \setminus \{0\}} \, \norm{ \Phi(s, u) u}_{G}^{2} N(ds,du) \right)^{p/2} \right]
\end{eqnarray*}

Taking into account that 
\begin{eqnarray*}
    \int_{0}^{t}\!\! \int_{U \setminus \{0\}} \, \norm{ \Phi(s, u) u}_{G}^{2} N(ds,du) & = & \int_{0}^{t}\!\! \int_{U \setminus \{0\}} \, \norm{ \Phi(s, u) u}_{G}^{2} \widetilde{N}(ds,du) \\ 
    &+& \int_{0}^{t}\!\! \int_{U \setminus \{0\}} \, \norm{ \Phi(s, u) u}_{G}^{2} \lambda(du)\, ds,
\end{eqnarray*}
we obtain
\begin{flalign*}
    & \Exp\left[  \left( \int_{0}^{t}\!\! \int_{U \setminus \{0\}} \, \norm{ \Phi(s, u) u}_{G}^{2} N(ds,du) \right)^{p/2} \right] \\ & \leq \widetilde{C}(p) \Exp\left[  \left( \int_{0}^{t}\!\! \int_{U \setminus \{0\}} \, \norm{ \Phi(s, u) u}_{G}^{2} \widetilde{N}(ds,du) \right)^{p/2} \right] \\
    & + \widetilde{C}(p)\Exp\left[  \left( \int_{0}^{t}\!\! \int_{U \setminus \{0\}} \, \norm{ \Phi(s, u) u}_{G}^{2} \lambda(du)\, ds \right)^{p/2} \right].
\end{flalign*}
Then we can apply Corollary 2.4 in \cite{ZhuBrzezniakLiu:2019} to the compensated Poisson integral to deduce a Kunita inequality (see Theorem 4.4.23 in \cite{ApplebaumLPSC} for the finite-dimensional case)
\begin{eqnarray*}
\mathbb{E}\left[ \sup_{0 \leq s \leq t} \norm{ X_{s}}_{G}^p\right] 
& \leq &  C(p)  \left\{ \mathbb{E}\left[\left( \int_{0}^{t}  \| \Phi(s,0)\circ Q^{1/2}\|^2_{\mathcal{HS}(H,G)} \, ds  \right)^{p / 2}\right] \right. \\
& + &   \Exp \left(  \int_{0}^{t}\!\! \int_{U \setminus \{0\}} \, \norm{ \Phi(s, u) u}_{G}^{p} \lambda(du) ds \right) \\
& + & \left. \Exp \left[ \left(  \int_{0}^{t} \!\! \int_{U \setminus \{0\}} \, \norm{ \Phi(\omega, s, u) u}_{G}^{2} \lambda(du) ds\right)^{p/2} \right] \right\}. 
\end{eqnarray*}
\end{example}

\section{It\^{o}'s formula for particular examples of cylindrical martingale-valued measures}\label{sectionAppliItoToHValuedMVM}

In this section we show how cylindrical orthogonal martingale-valued measures generalize both Hilbert-space valued martingale measures and cylindrical square integrable martingales, and apply the results of Theorems \ref{theoItoFormulaScalarCase} and  \ref{theoItoFormulaVectorCase} to obtain It\^{o}'s formulas in each case. Some of  the results we obtain are new in the literature, and in some other cases we obtain alternative proofs for some known results.

\subsection{Hilbert-space valued orthogonal martingale-valued measures}

Assume  that $M$ is an $H$-valued orthogonal martingale-valued measure (see Definition 3.3 in \cite{CCFM:SPDE}).  By Theorem 3.6 in \cite{CCFM:SPDE}, it has an intensity measure $\nu_{M}$. Assume further that for every $A, B \in \mathcal{A}$ disjoint and $h \in H$, the real-valued martingales $(M(A),h)_{H}$ and $(M(B),h)_{H}$ are orthogonal. Then by Theorem 4.6 in \cite{CCFM:SPDE}, $M$ induces a  cylindrical orthogonal martingale-valued measure $\widehat{M}=(\widehat{M}(t,A): t \geq 0, A \in \mathcal{A})$ on $H$ given by
\begin{equation*}\label{eqInducedCMVMDefinedByHValuedMVM}
\widehat{M}(t,A)(\omega)(h)\defeq (M(t,A)(\omega),h)_{H}, \quad \forall \omega \in \Omega, t \geq 0, A \in \mathcal{A}, h \in H.    
\end{equation*}  
For simplicity, we allow ourselves an abuse of notation and also denote $\widehat{M}$ by $M$. It is easy to check (using for example (2.2) in \cite{CCFM:SPDE}) that 
\begin{equation}\label{eqInequaliIntensityHValued}
 \nu_{h} \leq \norm{h}^{2} \nu_{M}, \quad \forall h \in H. 
\end{equation}

\begin{proposition}\label{propExistenQuadrVariaHilbertMVM}
Let $M$ as described above, and assume that the intensity measure  $\nu_{M}$ of $M$ is bounded on $[0,t] \times \mathcal{A}$ for every $t>0$, that is
$$ \sup_{A \in \mathcal{A}} \nu_{M}([0,t]\times A)< \infty. $$
Then, the family of intensity measures $(\nu_{h}: \norm{h}_{H}=1)$ of $M$ satisfies the sequential boundedness property and (consequently) $M$ has a unique quadratic variation satisfying $\operQuadraVari{M} \leq \nu_{M}$ and an operator quadratic variation $Q_{M}: \Omega \times [0,T]  \times U \rightarrow \mathcal{L}(H,H)$ satisfying \eqref{existenceofQ}. 
\end{proposition}
\begin{proof}
First, it is a consequence of \eqref{eqInequaliIntensityHValued} that  $M$ possesses at least one quadratic variation as a cylindrical martingale-valued measure.
To prove uniqueness, it suffices to check that the family of intensity measures $(\nu_{h}: \norm{h}_{H}=1)$ satisfies the sequential boundedness property. 

To do this, we will need the following inequality which is valid for any two real-valued square integrable martingales $m$ and $n$:
$$ \abs{ \quadraVari{m}_{t}- \quadraVari{n}_{t}} \leq \sqrt{\quadraVari{m-n}_{t}}\left( \sqrt{\quadraVari{m}_{t}}+ \sqrt{\quadraVari{n}_{t}} \right).$$
A proof can be carried out using basic properties of the quadratic variation and Kunita's inequality. 
Now, for $h_{1}, h_{2} \in H$, $t \geq 0$ and $A \in \mathcal{A}$,  we apply the above inequality with $m_{t}=M(t,A)h_{1}$ and $n_{t}=M(t,A)h_{2}$, and use \eqref{eqInequaliIntensityHValued} to obtain  
\begin{flalign*}
& \abs{\nu_{h_{1}}([0,t]\times A) - \nu_{h_{2}}([0,t]\times A)} \\
&=  \abs{\quadraVari{M(A)h_{1}}_{t} - \quadraVari{M(A)h_{2}}_{t}} \\
& \leq \sqrt{\quadraVari{M(A)(h_{1}-h_{2})}}_t  \left( \sqrt{\quadraVari{M(A)h_{1}}_{t}}+ \sqrt{\quadraVari{M(A)h_{2}}_{t}} \right) \\
& \leq \norm{h_{1}-h_{2}} \left( \norm{h_{1}}+\norm{h_{2}} \right) \nu_{M}([0,t]\times A)
\end{flalign*}
Hence, if $h_{n} \rightarrow h$ in the unit sphere in $H$ we have 
\begin{equation}\label{eqContinuityIntensityMeasures}
\sup_{A \in \mathcal{A}} \sup_{0 \leq s \leq t} \abs{ \nu_{h_{n}}((s,t] \times A) - \nu_{h}((s,t] \times A)} \rightarrow 0, \quad \mbox{as } n \rightarrow \infty. 
\end{equation}
Then, by  Proposition 5.7 in \cite{CCFM:SPDE} the family of intensity measures $(\nu_{h}: \norm{h}_{H}=1)$ satisfies the sequential boundedness property.  
Now, Theorem 5.8 in \cite{CCFM:SPDE} guarantees the existence of a unique quadratic variation $\operQuadraVari{M} \leq \nu_{M}$. Finally,  the condition \eqref{eqBoundedrangequadraticvariation} is satisfied thanks to our assumption on $\nu_{M}$, and so $M$ has an operator quadratic variation $Q_{M}: \Omega \times [0,T]  \times U \rightarrow \mathcal{L}(H,H)$ satisfying \eqref{existenceofQ}. 
\end{proof}

Thanks to Proposition \ref{propExistenQuadrVariaHilbertMVM} and Theorem  \ref{theoItoFormulaVectorCase} we obtain the following version of It\^{o} formula. 

\begin{corollary}
\label{coroItoHvalued}
Let $M$ be an $H$-valued orthogonal martingale-valued measure with the property that for every $A, B \in \mathcal{A}$ disjoint and $h \in H$, the real-valued martingales $(M(A),h)_{H}$ and $(M(B),h)_{H}$ are orthogonal. Assume that the intensity measure  $\nu_{M}$ of $M$ is bounded on $[0,t] \times \mathcal{A}$ for every $t>0$.   
Let $\xi$, $\beta$, $\psi$, $\Phi$ and $X$ be as in Definition \ref{defiItoProcess}. Let $f:[0, \infty) \times G \rightarrow K$ be a function in the $C^{1,2}$ class, such that $f_{xx}$ is uniformly continuous on bounded subsets of $[0,\infty)\times G$. Then  almost surely we have for every $t\geq 0$
$$
\begin{aligned}
& f(t, X_t) = f(0, \xi)+ \int_{0}^{t} f_{t}(s, X_{s-}) d s +\int_{0}^{t}\!\!\int_U  f_{x}(s, X_{s-}) \Phi(s, u)  M(ds, du) \\
& + \int_{0}^{t} f_{x}(s, X_{s-}) \psi(s) d \beta(s)   +\frac{1}{2} \int_{0}^{t} \!\!\int_U 
\mbox{Tr}_{\Phi(s, u) Q_{M^{c}}^{1 / 2}} \left( f_{x x}(s, X_{s-}) \right)
\operQuadraVari{M^{c}}(ds,du) \\
 & +\sum_{0<s \leq t}\left[f\left(s,X_s\right)-f\left(s,X_{s-}\right)-f_x\left(s,X_{s-}\right)\left(\Delta X_{s} \right)\right]
\end{aligned}
$$
\end{corollary}

\begin{remark}
As far as we know, It\^{o}'s formula  for Hilbert-space valued martingale-valued measures had never been proved.   This can be applied to recover the classical It\^o formula for Wiener processes (as in \cite{DaPratoZabczyk}), for square integrable L\'evy processes (as in \cite{PeszatZabczyk}) or square integrable martingales (as in \cite{MetivierPellaumail}).
\end{remark}

\begin{example}
Assume $N$ is a Poisson random measure on $\R \times \R^{d}$ with characteristic (intensity) measure $\mbox{Leb}\times \lambda$, where $\lambda$ is a $\sigma$-finite measure on $\R^{d}$. Take $U=\R^{d}$ and $\mathcal{A}=\mathcal{B}(\R^{d})$. Let  $\eta:\Omega \times \R_{+} \times \R^{d} \rightarrow H$ be a predictable process for which
$$ \Exp \int_{0}^{T}\!\! \int_{A} \norm{\eta(t,u)}^{2} \lambda(du) dt<\infty.$$
Then, $M$ given by
$$M(t,A)=\int_{0}^{t}\!\!  \int_{A} \eta(r,u) \widetilde{N}(dr,du)$$
defines an $H$-valued orthogonal martingale-valued measure with purely discontinuous paths, hence $\operQuadraVari{M^{c}}=0$ and 
$$ \operQuadraVari{M^{d}}=\int_{0}^{t}\!\!  \int_{A} \norm{\eta(r,u)}^{2} \lambda(du) dr. $$
In this case, It\^{o}'s formula in Corollary \ref{coroItoHvalued} takes the form:
\begin{align*}
&f(t, X_t)  = f(0, \xi)+ \int_{0}^{t} f_{t}(s, X_{s-}) d s  + \int_{0}^{t} f_{x}(s, X_{s-}) \psi(s) d \beta(s) \\
& \quad
+\int_{0}^{t}\!\!\int_U  f_{x}(s, X_{s-}) \Phi(s, u)  \eta(s,u) \widetilde{N}(ds,du)\\
& \quad  +
\int_{0}^{t}\!\! \int_{U} \left[f(s,X_{s-}+\Phi(s,u)\eta(s,u)) - f(s,X_{s-})-f_{x}(s,X_{s-})( \Phi(s, u) \eta(s,u))\right] \, N(ds,du),
\end{align*}

where $\xi$, $\beta$, $\psi$, $\Phi$, $X$ and $f$ are as in Corollary \ref{coroItoHvalued} . 
    
\end{example}

\subsection{Cylindrical square integrable martingales}

Assume that $Z=(Z_{t}: t \geq 0)$ is a cylindrical \emph{continuous} zero-mean square integrable  martingale such that for each $t \geq 0$ the mapping $Z_{t}: H \rightarrow L^{0}(\Omega,\mathcal{F},\Prob)$ is continuous. Consider a one point set $U=\{a\}$, $\mathcal{A}=2^{U}$ and let $M=(M(t,A): t \geq 0, A \in \mathcal{A})$ be defined by 
\begin{equation}\label{eqCMVMContiyliMartingale}
M(t,A)(h)=Z_{t}(h) \delta_{a}(a), \quad \, \forall h \in H, \, t \geq 0, A \in \mathcal{A}.
\end{equation}
We know, by Examples 4.4 and 5.2 in \cite{CCFM:SPDE}, that $M$ is a cylindrical orthogonal martingale-valued measure with family of intensity measures $\nu_h(ds,du)=\lambda_{\quadraVari{Z(h)}}(ds)\delta_{a}(du) $,
where $\lambda_{\quadraVari{Z(h)}}$ denotes the Lebesgue-Stieltjes measure associated to $\quadraVari{Z(h)}$. By Example 5.13, in \cite{CCFM:SPDE} the family of intensity measures  $(\nu_{h}: \norm{h}=1)$ of $M$ satisfies \eqref{eqContinuityIntensityMeasures}, hence the sequential boundedness property by Proposition 5.7 in \cite{CCFM:SPDE}. 
Thus, there exists a unique quadratic variation given by  $\operQuadraVari{M}= \gamma \otimes \delta_{a}  $ $\Prob$-a.e., for $\gamma=\sup_{n \geq 1} \lambda_{\quadraVari{Z(h_{n})}}$ where $(h_n: n \in \N)$ is dense in the unit sphere in $H$. Since the condition \eqref{eqBoundedrangequadraticvariation} is trivially satisfied, then $M$ has an operator quadratic variation $Q_{M}: \Omega \times [0,T]  \times U \rightarrow \mathcal{L}(H,H)$ satisfying \eqref{existenceofQ}. 

Let $Q_{Z}: \Omega \times [0,T] \rightarrow \mathcal{L}(H,H)$ be given by $Q_{Z}(\omega,t)=Q_{M}(\omega, t,a)$.  
We say that $\Phi:\Omega \times [0,T] \rightarrow \mathcal{HS}(H_{Q_{Z}},G)$ is in $\Lambda^{2}(Z,T)$, if $ (\omega,t) \mapsto \Phi(\omega,t) \circ Q^{1/2}_{Z}(\omega,t)(h)$ is  $\mathcal{P}_{T}/\mathcal{B}(G)$-measurable and satisfies
$$ \Exp \int_{0}^{T} \| \Phi(t)\circ Q_{Z}^{1/2}(t)\|^2_{\mathcal{HS}(H,G)} \, \gamma(dt) < \infty. $$
One can easily check from the definition of the stochastic integral in  \eqref{NewDefIntSimpleIntegrand}, that if $\Phi \in \Lambda^{2}(Z,T)$, then $\Phi \in \Lambda^{2}(M,T)$ and 
\begin{equation}\label{eqCompatibleintegrals}
\int_0^t \!\! \int_U \Phi(s)\, M(ds, du) = \int_0^t \Phi(s)\, dZ_{s},    
\end{equation} 
where the integrand in the right-hand side is the (radonifying) integral for cylindrical square integrable martingales as in \cite{MetivierPellaumail} (or as in \cite{DaPratoZabczyk, GawareckiMandrekar} for cylindrical Wiener processes). 

From the above observation and by an application of  Corollary \ref{coroItoVectorContinuousCase} we obtain an It\^{o} formula for cylindrical continuous square integrable martingales. This result coincides with the one obtained by Veraar and Yaroslatsev in (\cite{VeraarYaroslavtsev:2016}, Theorem 4.16) for the Hilbert space setting. 

\begin{corollary}
Let $\xi$, $\beta$, $\psi$ as in Definition \ref{defiItoProcess}. Assume that $Z=(Z_{t}: t \geq 0)$ is a cylindrical continuous zero-mean square integrable  martingale such that for each $t \geq 0$ the mapping $Z_{t}: H \rightarrow L^{0}(\Omega,\mathcal{F},\Prob)$ is continuous. Let $\Phi \in \Lambda^{2}(Z,T)$ and consider  the It\^{o} process:  
$$X_t : = \xi + \int_0^t \psi(s) d\beta(s) + \int_0^t \Phi(s)\, dZ_{s}. $$
 Let $f:[0, \infty) \times G \rightarrow K$ be a function of class $C^{1,2}$ such that $f_{xx}$ is uniformly continuous on bounded subsets of $[0,\infty)\times G$. Then,  almost surely we have for every $t\geq 0$
 $$
\begin{aligned}
& f(t, X_t) = f(0, \xi)+\int_{0}^{t}   f_{x}(s, X_s) \Phi(s)  dZ_{s}  + \int_{0}^{t} f_{x}(s, X_s) \psi(s) d \beta(s)  \\
&  +\int_{0}^{t} f_{t}(s, X_s) d s +\frac{1}{2} \int_{0}^{t} 
\mbox{Tr}_{\Phi(s) Q_{Z}^{1 / 2}} \left( f_{x x}(s, X_s) \right)
\gamma(ds). 
\end{aligned}
$$
\end{corollary}

Now we discuss the general case. Assume that $Z=(Z_{t}: t \geq 0)$ is a cylindrical zero-mean square integrable  martingale such that for each $t \geq 0$ the mapping $Z_{t}: H \rightarrow L^{0}(\Omega,\mathcal{F},\Prob)$ is continuous.
As in Section \ref{subSecContAndDisconCMVM} we can decompose $Z$ as $Z_{t}=Z^{c}_{t}+Z^{d}_{t}$ where $Z^{c}$ is a cylindrical continuous zero-mean square integrable martingale and $Z^{d}$ is a cylindrical purely discontinuous zero-mean square integrable martingale. In this case, we can define three cylindrical martingale-valued measures:
$$ M(t,A)= Z_{t} \delta_{a}, \quad M^{c}(t,A)= Z^{c}_{t} \delta_{a}, \quad M^{d}(t,A)= Z^{d}_{t} \delta_{a}.$$
Clearly $M=M^{c}+M^{d}$ and since $Z^{c}$ and $Z^{d}$ are orthogonal, the corresponding families of intensity measures satisfies:
\begin{equation}\label{eqDecompoIntensityMeasures}
\nu_{h}=\nu^{c}_{h}+\nu^{d}_{h}.
\end{equation}
Assume that the family of intensity measures $(\nu_{h}: \norm{h}=1)$ of $M$ satisfies \eqref{eqContinuityIntensityMeasures}. 
We saw in the continuous paths setting, that the family $(\nu^{c}_{h}: \norm{h}=1)$ satisfies \eqref{eqDecompoIntensityMeasures}. Moreover, by \eqref{eqDecompoIntensityMeasures}, we conclude that \eqref{eqContinuityIntensityMeasures} also holds true for $(\nu^{d}_{h}: \norm{h}=1)$. Then by Proposition 5.7 in \cite{CCFM:SPDE} the families 
$(\nu_{h}: \norm{h}=1)$, $(\nu^{c}_{h}: \norm{h}=1)$ and $(\nu^{d}_{h}: \norm{h}=1)$
satisfy the sequential boundedness property. Then, as explained in Section \ref{subSecContAndDisconCMVM},  the quadratic variations $\operQuadraVari{M}$, $\operQuadraVari{M^{c}}$ and $\operQuadraVari{M^{d}}$ exist and are unique. 

As before, one can check that $\operQuadraVari{M}= \gamma \otimes \delta_{a}  $ $\Prob$-a.e., for $\gamma=\sup_{n \geq 1} \lambda_{\quadraVari{Z(h_{n})}}$ and so the condition \eqref{eqBoundedrangequadraticvariation} is trivially satisfied. Likewise we have 
$\operQuadraVari{M^{c}}= \gamma^{c} \otimes \delta_{a}  $ $\Prob$-a.e., for $\gamma^{c}=\sup_{n \geq 1} \lambda_{\quadraVari{Z^{c}(h_{n})}}$.
By the arguments in in Section \ref{subSecContAndDisconCMVM} we can show the existence of $Q_{M}$, $Q_{M^{c}}$ and $Q_{M^{d}}$. As before we can define
$Q_{Z}$ and $\Lambda^{2}(M,T)$ (and the analogs for $Z^{c}$ and $Z^{d})$. The equality in \eqref{eqCompatibleintegrals} remains valid for $M$ and $Z$ (respectively for $M^{c}$ an $Z^{c}$, and for $M^{d}$ an $Z^{d}$). As an application of Theorem \ref{theoItoFormulaVectorCase} we obtain the following It\^{o} formula for cylindrical square integrable martingales. We are not aware of any result of this nature in the literature. 
 
\begin{corollary}\label{coroItoFormulaDiscontinuousCylinSquaIntegMartingale}
Let $Z$, $Z^{c}$ and $Z^{d}$ as described above, and assume that the family of intensity measures $(\nu_{h}: \norm{h}=1)$ of $M$ satisfies \eqref{eqContinuityIntensityMeasures}.   
Let $\xi$, $\beta$, $\psi$ as in Definition \ref{defiItoProcess}. For $\Phi \in \Lambda^{2}(Z,T)$ consider  the It\^{o} process:  
$$X_t : = \xi + \int_0^t \psi(s) d\beta(s) + \int_0^t \Phi(s)\, dZ_{s}. $$
 Let $f:[0, \infty) \times G \rightarrow K$ be a function of class $C^{1,2}$ such that $f_{xx}$ is uniformly continuous on bounded subsets of $[0,\infty)\times G$. Then  almost surely we have for every $t\geq 0$
$$
\begin{aligned}
& f(t, X_t) = f(0, \xi)+ \int_{0}^{t} f_{t}(s, X_{s-}) d s +\int_{0}^{t} f_{x}(s, X_{s-}) \Phi(s)  dZ_{s} \\
& + \int_{0}^{t} f_{x}(s, X_{s-}) \psi(s) d \beta(s)   +\frac{1}{2} \int_{0}^{t} \!\!\int_U 
\mbox{Tr}_{\Phi(s) Q_{Z^{c}}^{1 / 2}} \left( f_{x x}(s, X_{s-}) \right) \gamma^{c}(ds)
 \\
 & +\sum_{0<s \leq t}\left[f\left(s,X_s\right)-f\left(s,X_{s-}\right)-f_x\left(s,X_{s-}\right)\left(\Delta X_{s} \right)\right]
\end{aligned}
$$
\end{corollary}

Cylindrical L\'evy processes were introduced in \cite{ApplebaumRiedle:2010}. Since then, several works introduced theories of stochastic integration with respect to these processes. Recently, in \cite{BodoRiedleTylb:2024}, they prove an It\^o formula for cylindrical standard $\alpha$-stable L\'evy processes for $1< \alpha <2$. However, we could not find in the literature a proof of an It\^o formula for cylindrical square integrable L\'evy processes. In the next example we apply Corollary \ref{coroItoFormulaDiscontinuousCylinSquaIntegMartingale} to prove such an It\^o formula with an emphasis on the components of the cylindrical L\'evy-It\^{o} decomposition. 

\begin{example}
Let $Z=(Z_{t}: t \geq 0)$ be a cylindrical zero-mean square integrable L\'{e}vy process in $X$, that is, for every $d \in \N$, $h_{1}, \cdots, h_{d} \in H$ the $\R^{d}$-valued stochastic process $(Z_{t}(h_{1}), \cdots, Z_{t}(h_{d}): t \geq 0)$ is a  L\'{e}vy process in $\R^{d}$, and $\Exp [Z_{t}(h) ] = 0 $ and $\Exp [ \abs{Z_{t}(h)}^{2} ]< \infty$ for every $t \geq 0$, $h \in H$. We further assume for such a $Z$ that the mapping $Z_{t}: H \rightarrow L^{0} \ProbSpace$ is continuous.   

It follows from Corollary 3.12 in \cite{ApplebaumRiedle:2010}, that $L$ can be decomposed into $L_{t}=W_{t}+P_{t}$. Here, $W=(W_{t}:t \geq 0)$ is a cylindrical Wiener process with covariance operator $Q_{W}:H \rightarrow H $ satisfying
$$ \Exp (\abs{ W_{t}(h)}^{2})=(t-s) \inner{Q_{W}h}{h}_{H}, \quad \forall \, h \in H, \, t \geq 0.$$ 
The cylindrical process $P=(P_{t}: t \geq 0)$ is independent of $W$, and is a cylindrical mean-zero  square integrable martingale given by 
$$ P_{t}(h)= \int_{\R \setminus \{0\}} \, \beta \widetilde{N}_{h} (t, d\beta), \quad \forall \, h \in H, $$
where $ N_{h}$ denotes the Poisson random measure associated with the L\'{e}vy process $(P_{t}(h): t \geq 0)$ and with corresponding L\'{e}vy measure $\lambda_{h}$. 
Theorem  2.7 in \cite{ApplebaumRiedle:2010} shows that there exists a cylindrical measure $\lambda$ on the cylindrical sets in $H$, called the cylindrical L\'{e}vy measure of $L$, such that for each $h \in H$ we have $\lambda_{h}=\lambda \circ \pi_{h}^{-1}$, where $\pi_{h}:H \rightarrow \R$, $\pi_{h}(y)=\inner{y}{h}_{H}$. In particular, we have 
$$ \Exp (\abs{ P_{t}(h)}^{2})=t \int_{H} \inner{y}{h}^{2} \lambda(dy), \quad \forall \, h \in H, \, t \geq 0.$$ 
It is easy to see that $Z$ is a cylindrical zero-mean square integrable  martingale. Moreover, $Z^{c}=W$ and $Z^{d}=P_{t}$.  Let $\tilde{Q}_{Z}: H\rightarrow H$ be given by 
$$\tilde{Q}_{Z}h=Q_{W}h+\int_{H} \inner{y}{h}^{2} \lambda(dy).$$
Then it is clear that  $\inner{\tilde{Q}_{Z}h_{1}}{h_{2}}_{H}=\Exp \left[ Z_{1}(h_{1}) Z_{1}(h_{2}) \right] $ $\forall h_{1}, h_{2} \in H$. The operator $\tilde{Q}_{Z}$ is known as the \emph{covariance operator} of $Z$ and it is linear, positive, symmetric and continuous. Moreover, since $Z(h)$ is a real-valued mean zero square integrable L\'evy process we have $\quadraVari{Z(h)}_{t} = t \Exp \left[\abs{Z_{1}(h)}^{2} \right]= t \inner{\tilde{Q}_Zh_{1}}{h_{2}}_{H}$.

Let $M$ be as in \eqref{eqCMVMContiyliMartingale}. By Example 5.14 in \cite{CCFM:SPDE} we have $\nu_{M}=\inner{\tilde{Q}_{Z}h}{h}_{H} (\mbox{Leb}\otimes \delta_{a})$ and the family of intensity measures $(\nu_{h}: \norm{h}=1)$ of $M$ satisfy \eqref{eqContinuityIntensityMeasures}. Moreover, we have $\operQuadraVari{M}=\norm{\tilde{Q}_{Z}}(\mbox{Leb}\otimes \delta_{a})$ and $Q_{Z}=\tilde{Q}_{Z}/\norm{\tilde{Q}_{Z}}$. One can likewise conclude that  $\operQuadraVari{M^{c}}=\norm{Q_{W}}(\mbox{Leb}\otimes \delta_{a})$ and $Q_{Z^{c}}=Q_{W}/\norm{Q_{W}}$. Observe that we have $\gamma^{c}= \norm{Q_{W}}\mbox{Leb}$.

 By following the arguments in Section 3 in  \cite{BodoRiedleTylb:2024} we can obtain an useful description of the sum of the jumps which involves the cylindrical L\'evy measure of $L$.  Consider the integer-valued random measure corresponding to the jumps of $X$:
$$\mu^{X}((0,t] \times B)= \sum_{0<s\leq t} \mathbbm{1}_{B}(\Delta X_{s}), \quad 0<t \leq T, \  B \in \mathcal{B}(H), \  0 \notin B. $$
$\mu^{X}$ is an optional $\sigma$-finite random measure on $\mathcal{B}([0,T]) \otimes \mathcal{B}(H)$, hence has a predictable compensator $\nu^{X}$ (see Section II.1 in \cite{JacodShiryaev:2003}). 
Moreover, from the definition of $\mu^{X}$ we have
\begin{multline}\label{eqSumJumpsAsIntegralCylLevy}
\sum_{0<s \leq t}\left[f\left(s,X_s\right)-f\left(s,X_{s-}\right)-f_x\left(s,X_{s-}\right)\left(\Delta X_{s} \right)\right]  \\
=\int_{0}^{t}\!\! \int_{H} \left[f\left(s,X_{s-} +h\right)-f\left(s,X_{s-}\right)-f_x\left(s,X_{s-}\right)\left(h \right)\right] \, \mu^{X}(ds,dh). 
\end{multline}
Observe that the jumps of $X$ correspond to those of the stochastic integral process $Y_{t}=\int_{0}^{t} \Phi(s)dP_{s}$. Hence, by a modification of the arguments used in the proof of Theorem 3.5 in \cite{BodoRiedleTylb:2024}, one can show that the predictable compensator $\nu^{X}$ of $\mu^{X}$ is characterized by:
\begin{equation}\label{eqPredCompenCylLevy}
 \nu^{X}((0,t]\times B)  =\int_{0}^{t} \left( \lambda \circ \Phi(s)^{-1}\right)(B)ds, \quad 0<t \leq T, \ B \in \mathcal{B}(H), \ 0 \notin B.
\end{equation}
Therefore, from Corollary \ref{coroItoFormulaDiscontinuousCylinSquaIntegMartingale}, \eqref{eqSumJumpsAsIntegralCylLevy} and \eqref{eqPredCompenCylLevy} we obtain the following expression for the It\^{o} formula for $X$:
$$
\begin{aligned}
& f(t, X_t) = f(0, \xi)+ \int_{0}^{t} f_{t}(s, X_{s-}) d s \\
& +\int_{0}^{t} f_{x}(s, X_{s-}) \Phi(s)  dW_{s} +\int_{0}^{t} f_{x}(s, X_{s-}) \Phi(s)  dP_{s} \\
& + \int_{0}^{t} f_{x}(s, X_{s-}) \psi(s) d \beta(s)   +\frac{1}{2} \int_{0}^{t} \!\!\int_H 
\mbox{Tr}_{\Phi(s) Q_{W}^{1 / 2}} \left( f_{x x}(s, X_{s-}) \right) ds
 \\
 & +\int_{0}^{t}\!\! \int_{H} \left[f\left(s,X_{s-} +h\right)-f\left(s,X_{s-}\right)-f_x\left(s,X_{s-}\right)\left(h \right)\right] \, (\mu^{X}-\nu^{X})(ds,dh) \\
  & +\int_{0}^{t}\!\! \int_{H} \left[f\left(s,X_{s-} +g\right)-f\left(s,X_{s-}\right)-f_x\left(s,X_{s-}\right)\left(g \right)\right] \, \left(\lambda \circ \Phi(s)^{-1}\right)(dg)ds
\end{aligned}
$$

\end{example}

\noindent \textbf{Acknowledgments}  This work was partially supported by The University of Costa Rica through the grant ``C3019- C\'{a}lculo Estoc\'{a}stico en Dimensi\'{o}n Infinita''. 

\noindent \textbf{Data Availability.} Data sharing not applicable to this article as no data sets were generated or analyzed during the current study. 

\noindent \textbf{Disclosure statement.} The authors report there are no competing interests to declare.

\section*{Declarations}

\noindent \textbf{Conflict of interest} The authors have no conflicts of interest to declare that are relevant to the content of this article.

\end{document}